\renewcommand{\ae}{\text{a.e.}}
\newcommand{\AC}{\mathop{\mathrm{AC}}\!}
\newcommand{\sA}{\mathscr{A}}
\newcommand{\sB}{\mathscr{B}}
\newcommand{\BV}{\mathrm{BV}}
\newcommand{\sC}{\mathscr{C}}
\newcommand{\cadlag}{c\`adl\`ag }
\newcommand{\const}{\mathit{const}\,}
\newcommand{\e}{\mathrm{e}}
\newcommand{\EE}{\mathbb{E}}
\newcommand{\sF}{\mathcal{F}}
\newcommand{\sG}{\mathcal{G}}
\newcommand{\I}{\mathcal{I}}
\newcommand{\J}{\mathcal{J}}
\newcommand{\sK}{\mathscr{K}}
\mathchardef\mhyphen="2D 
\renewcommand{\emptyset}{\varnothing}
\newcommand{\NN}{\mathbb{N}}
\newcommand{\sO}{\mathscr{O}}
\newcommand{\PP}{\mathbb{P}}
\newcommand{\sQ}{\ensuremath{\mathcal{Q}}\xspace}
\newcommand{\sR}{\ensuremath{\mathcal{R}\xspace}}
\newcommand{\RR}{\ensuremath{\mathbb{R}}}
\renewcommand{\S}{\mathcal{S}}
\newcommand{\sS}{\mathscr{S}}
\newcommand{\II}{\mathds{1}}
\newcommand{\X}{\ensuremath{\mathcal{X}}\xspace}
\newcommand{\sY}{\ensuremath{\mathcal{Y}}\xspace}
\newcommand{\dd}{\mathrm{d}}
\newcommand{\norm}[1]{\ensuremath{\left\lVert{#1}\right\rVert}}
\newcommand{\abs}[1]{\ensuremath{\left\lvert{#1}\right\rvert}}
\def\hybridto{\;\substack{\rightharpoonup\\[-0.3em]\rightarrow}\;}
\newcommand{\one}{\mathds{1}}
\newcommand{\bk}{\bar{k}}
\newcommand{\TV}{\mathrm{TV}}
\newcommand{\epvar}{\mathrm{epvar}}
\newcommand{\super}[1]{^{\scriptscriptstyle{(#1)}}}
\DeclareMathOperator\Lip{Lip}
\DeclareMathOperator\Prob{Prob}
\newtheorem{theorem}{Theorem}[section]
\newtheorem{lemma}[theorem]{Lemma}
\newtheorem{proposition}[theorem]{Proposition}
\newtheorem{corollary}[theorem]{Corollary}
\newtheorem{definition}[theorem]{Definition}
\newtheorem{assumption}[theorem]{Assumption}
\newenvironment{remark}
  {\par\medbreak\refstepcounter{theorem}%
    \noindent\textbf{Remark~\thetheorem. }}%
  {\qed\par\medskip}
\numberwithin{equation}{section}
  \newcommand{\eqnum}{\leavevmode\hfill\refstepcounter{equation}\textup{\tagform@{\theequation}}} 
\author{Robert I. A. Patterson and D. R. Michiel Renger}
\title{Large deviations of reaction fluxes}
\date{\today}
\begin{document}
\maketitle



\abstract{We study a system of interacting particles that randomly react to form new particles. The reaction flux is the rescaled number of reactions that take place in a time interval. We prove a dynamic large-deviation principle for the reaction fluxes under general assumptions that include mass-action kinetics. This result immediately implies the dynamic large deviations for the empirical concentration.
}

\section{Introduction}
\label{sec:introduction}

Since Boltzmann's microscopic interpretation of entropy it is clear that thermodynamics is inherently related to large deviations. Onsager, in his papers \cite{Onsager1931I,Onsager1931II} was able to extend this principle to the non-static regime - at least for reversible systems and close to equilibrium. More recently, it was shown that reversible stochastic particle systems induce a thermodynamically consistent gradient flow through their dynamical large deviations, see~\cite{Adams2011,MielkePeletierRenger2014}, and in particular~\cite{MielkePattersonPeletierRenger2015} for an application to chemical reactions. This characterises dynamic behavior even far from equilibrium. However, a thermodynamically consistent representation of non-reversible particle systems remains one of the main open problems of non-equilibrium thermodynamics.

The difficulty in understanding irreversible particle systems lies in the occurrence of non-trivial fluxes, which is why flux large deviations are a commonly studied object, see 
\cite{DerridaDoucotRoche2004,BodineauLagouge2010,BodineauLagouge2012,Derrida2007lecturenotes,Bertinietal2005,Bertinietal2006,BaiesiMaesNetocny2009} for examples covering Brownian motionss, random walkers and exclusion processes.
In this work we apply the flux approach to reacting particles on a discrete state space.

\paragraph{Reacting particle system.} We study a general network of reactions,
\begin{equation}
  \sum_{y\in\sY}\alpha\super{r}_y y \xrightarrow{\bk\super{r}} \sum_{y\in\sY} \beta\super{r}_y y, \qquad r\in\sR,
\label{eq:reaction network}
\end{equation}
where $\sY$ is finite set of species, and $\sR$ is a finite set of reactions, and $\bk\super{r}$ are the corresponding reaction rates. A typical choice of reaction rates is $\bk\super{r}(c)=\const\times \prod_{y\in\sY} c_y^{\alpha_y}$; this is called \emph{mass-action kinetics}, but we will consider a much more general class of rates. 

For example, one could have the reactions
\begin{align*}
  2\mathsf{H}_2 + \mathsf{O}_2 \xrightarrow{\bk\super{\mathrm{fw}}} 2 \mathsf{H}_2 \mathsf{O}, && \text{and} &&
  2\mathsf{H}_2 \mathsf{O} \xrightarrow{\bk\super{\mathrm{bw}}} 2\mathsf{H}_2 + \mathsf{O}_2.
\end{align*}
In this case the set of species is $\sY=\{\mathsf{H}_2,\mathsf{O}_2,\mathsf{H}_2 \mathsf{O}\}$, the set of reactions is $\sR=\{\mathrm{fw},\mathrm{bw}\}$, and $\bk\super{\mathrm{fw}},\bk\super{\mathrm{bw}}$ are the reaction rates that depend on the concentration of the species in $\sY$. Furthermore, the species needed for the reactions can be grouped in the vectors $\alpha\super{\mathrm{fw}}, \alpha\super{\mathrm{bw}}=(2,1,0),(0,0,2)$, and similarly for the species resulting from the reactions $\beta\super{\mathrm{fw}}, \beta\super{\mathrm{bw}}=(0,0,2),(2,1,0)$. These vectors are called \emph{complexes} or \emph{stoichiometric} coefficients, the latter being Greek for ``element counting''.

The reaction networks described above are commonly modelled by the following microscopic particle system, see the survey \cite{AndersonKurtz2011} and the references therein. If at some given time $t$ there are $N(t)$ particles of types $Y_1(t),\hdots, Y_{N(t)}(t)$ in the system with fixed volume $V$, then the empirical measure (or concentration) is defined as $C\super{V}(t):=V^{-1}\sum_{i=1}^{N(t)}\one_{Y_i(t)}$. With jump rate $k\super{r,V}(C\super{V}(t))$, also called \emph{propensity}, a reaction $r$ occurs, causing the concentration to jump to the new state $C\super{V}(t)+\frac1V\gamma\super{r}$, where $\gamma\super{r}=\beta\super{r}-\alpha\super{r}\in \RR^\sY$ is the \emph{effective stoichiometric vector} (sometimes called \emph{state change vector}) for reaction $r$ and these are collected in a matrix $\Gamma:=\lbrack \gamma\super{1} \hdots, \gamma\super{R} \rbrack$, which therefore maps rescaled reaction counts to changes in concentration.
Since the propensities $k\super{r,V}$ depend on the particles through the empirical concentration only, $C\super{V}(t)$ is a Markov jump process in $\RR^\sY$.
The volume $V$ controls the order of the (changing) number of particles in the system.

A classic result~\cite{Kurtz1970,Kurtz1972} says that the empirical measure $C\super{V}(t)$ converges as $V\to\infty$ to the solution of the \emph{reaction rate equation} $\dot c(t)=\sum_{r\in\sR}\bk\super{r}\big(c(t)\big)$, where $V^{-1} k\super{V,r}\to\bk\super{r}$ (in a way that we specify later).

\paragraph{Reaction Fluxes.}

More information is included in the \emph{integrated empirical reaction flux},
\begin{align*}
  W\super{V,r}(t) &=\tfrac1V\#\big\{\text{reactions } r \text{ that occurred in time }(0,t\rbrack \big\}.
\end{align*}
The pair $\big(C\super{V}(t),W\super{V}(t)\big)$ is then a Markov process in $\RR^\sY_+\times\RR^\sR_+$ with generator
\begin{equation}
  (\sQ\super{V} f)(c,w) = \sum_{r\in \sR} k\super{r,V}(c)\big( f(c+\tfrac1V\gamma\super{r},w+\tfrac1V\mathds1_r)-f(c,w)\big).
\label{eq:generator}
\end{equation}

As in the Kurtz limit, this pair converges to the solution of the system of ODEs
\begin{align*}
\begin{cases}
  \dot c(t) = \Gamma\dot w(t)=\sum_{r\in\sR} \dot w\super{r}(t)\gamma\super{r},\\
  \dot w(t) = \bk\big(c(t)\big).
\end{cases}
\end{align*}
The first equation is a continuity equation, which also holds almost surely for the microscopic pair $(C\super{V},W\super{V})$, for finite $V$.

\paragraph{Large deviations.} The dynamic large-deviation principle for the concentrations $C\super{V}$ have been proven in \cite{Feng1994dynamic,Leonard1995,DjehicheKaj1995,ShwartzWeiss1995,ShwartzWeiss2005,DupuisEllisWeiss1991,LiLin2015,DupuisRamananWu2016} under various assumptions. Large deviations for the pair $(C\super{V},W\super{V})$ of concentrations and fluxes is, as far as we are aware, a relatively untred area. Formal large-deviation calculations for the reaction fluxes are found in \cite{BaiesiMaesNetocny2009}, a rigorous proof for the independent case was given in \cite{Renger2017}, and a semigroup-based rigorous proof for a more general class of reaction fluxes can be found in \cite{Kraaij2017}, still excluding mass-action kinetics. In our main result, we prove a dynamical large-deviation principle for the process $(C\super{V},W\super{V})$, under initial distribution $(\mu\super{V},\delta_0)$, where we shall assume that $\mu\super{V}$ satisfies a large-deviation principle with some rate functional $\I_0$. The precise statement reads:

\begin{theorem}\label{th:ldp}
Let $\mu\super{V}$ satisfy a large-deviation principle with rate function $\I_0$, and let Assumptions~\ref{ass:cont initial ldp} on $\mu\super{V}$ and Assumption~\ref{ass:rrate conditions} on $k,\bk$ hold. Then the process $(C\super{V}(t),W\super{V}(t))_{t=0}^T$ satisfies a large-deviation principle in $\BV(0,T;\RR^\sY\times\RR^\sR_+)$, equipped with the hybrid topology, with good rate functional $\I_0\big(c(0)\big) + \J(c,w)$, where
\begin{align}
  &\J(c,w) := \begin{cases}
                \int_0^T\!\S\big(\dot w(t) \mid \bk(c(t)\big)\,\dd t, &(c,w)\in W^{1,1}\big(0,T;\RR^\sY\times\RR^\sR_+\big),
                \text{ and } \dot c = \Gamma\dot w,\\
                \infty,                                              &\text{otherwise},
              \end{cases}
\label{eq:LDP rate}
\end{align}
with relative entropy
\begin{align*}
  \S( j\mid\hat j)&:=
   \begin{cases}\sum_{r\in \sR} s( j\super{r}\mid\hat j\super{r}),  & \text{ if }  j \ll \hat j,\\
                +\infty & \text{ otherwise}, \qquad\text{and}
     \end{cases}\\
  s(j\super{r}\mid\hat j\super{r})&:=
    \begin{cases}
      j\super{r} \log\big( \frac{j\super{r}}{\hat j\super{r}} \big) -j\super{r} +\hat j\super{r}, &j\super{r}>0,\\
      \hat j,  &j\super{r}=0,
    \end{cases}
\end{align*}
where $j \ll \hat j$ means that for all $r \in \sR$ one has $\hat j\super{r} = 0 \implies j\super{r}=0$.
\end{theorem}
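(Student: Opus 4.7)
Since $C\super{V}(t) = C\super{V}(0) + \Gamma W\super{V}(t)$ holds identically for the microscopic process, the pair $(C\super{V}, W\super{V})$ is determined by $(C\super{V}(0), W\super{V})$; it therefore suffices to establish a joint LDP for the latter, from which the theorem follows by contraction through the continuous map $(c_0, w) \mapsto (c_0 + \Gamma w, w)$ and using the assumed LDP for $\mu\super{V}$ to pick up the $\I_0(c(0))$ contribution. My plan is the Feng--Kurtz / Dupuis--Ellis programme adapted to the flux variable. Exponential tightness in $\BV$ reduces to a sub-exponential tail bound on $|W\super{V}(T)|_1$ because $W\super{V}$ is coordinatewise non-decreasing, so its $\BV$-norm equals $|W\super{V}(T)|_1$. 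For the upper bound I would use, for each smooth $\zeta: [0,T] \to \RR^\sR$, the exponential martingale
\begin{equation*}
M\super{V}_\zeta(t) = \exp\Big[V \zeta(t) \cdot W\super{V}(t) - V \int_0^t \dot\zeta(s) \cdot W\super{V}(s)\, ds - \int_0^t \sum_r k\super{r,V}(C\super{V}(s))(e^{\zeta^r(s)} - 1)\, ds\Big],
\end{equation*}
combined with $V^{-1} k\super{r,V} \to \bk\super{r}$ supplied by Assumption~\ref{ass:rrate conditions}; Chebyshev's inequality and optimization over $\zeta$ yield the candidate rate $\sup_\zeta \int_0^T [\zeta \cdot \dot w - \sum_r \bk\super{r}(c)(e^{\zeta^r} - 1)]\, dt$, which by pointwise Legendre duality of $y \mapsto e^y - 1$ equals $\int_0^T \S(\dot w \mid \bk(c))\, dt$ on absolutely continuous non-decreasing $w$, and $+\infty$ otherwise. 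The constraint $\dot c = \Gamma \dot w$ transfers directly from the microscopic identity.

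\textbf{Lower bound.} For a target path $(c, w)$ with $\dot w\super{r}(t) \geq \delta$ and $c_y(t) \geq \delta$ uniformly, I would tilt the reaction intensities $k\super{r,V}(C\super{V}(s)) \mapsto V \dot w\super{r}(s)$. The Radon--Nikodym derivative of the tilted law with respect to the original is, up to $o(V)$ corrections in the exponent, $\exp\big[-V \int_0^T \S(\dot w \mid \bk(c))\, dt\big]$, and a law-of-large-numbers argument for the time-inhomogeneous tilted dynamics shows that $(C\super{V}, W\super{V})$ concentrates on $(c, w)$ under the tilted law; the standard Dupuis--Ellis recipe then delivers the lower bound for such regular paths. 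To cover a general finite-rate path, one must approximate it by such regular paths in the hybrid $\BV$-topology while preserving the continuity constraint $\dot c = \Gamma \dot w$ and controlling $\J$.

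\textbf{Main obstacle.} The density step is the technical core. Under mass-action kinetics $\bk\super{r}(c) \propto \prod_y c_y^{\alpha\super{r}_y}$, the rates vanish on the boundary $\{c_y = 0\}$, and any finite-rate path is forced to satisfy $\dot w\super{r} = 0$ there as well. Regularizing while (i) respecting $\dot c = \Gamma \dot w$, (ii) keeping $c$ inside the positive orthant, and (iii) controlling the entropy cost cannot be achieved by plain mollification. Instead one needs perturbations by carefully chosen auxiliary fluxes---for instance close to the kernel of $\Gamma$, so as to push $c$ inward without distorting the trajectory---combined with a time regularization, with the extra entropy cost shown to vanish as the regularization parameter tends to zero. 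Carrying this out under the broader class of rates allowed by Assumption~\ref{ass:rrate conditions}, rather than strict mass-action, is where the bulk of the technical work lies; once this density is established, combining the conditional lower bound with the LDP for $\mu\super{V}$ and the goodness of the resulting rate functional closes the argument.
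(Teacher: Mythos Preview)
Your overall architecture—exponential tightness, a martingale/Chebyshev upper bound producing the dual form $\sup_\zeta G(c,w,\zeta)$, and a tilting lower bound followed by a density-of-regular-paths argument—is exactly the paper's strategy, and you correctly single out the approximation step as the crux. Two points of comparison are worth recording.

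\textbf{The approximation step.} Your concrete suggestion, perturbing $w$ by ``auxiliary fluxes close to the kernel of $\Gamma$'' so as to push $c$ into the interior, does not work as stated: a flux in $\ker\Gamma$ leaves $c$ unchanged, and to push \emph{every} species strictly positive by perturbing $w$ alone you would need a direction $v\in\RR^\sR_+$ with $\Gamma v>0$ componentwise, which is impossible whenever the network has a conservation law with non-negative coefficients (e.g.\ total mass). The paper's device is different and is the reason the continuity assumption on $\I_0$ is there: one perturbs the \emph{initial condition} rather than the flux. Pick $\hat c$ with $\min_r\bk\super{r}(\hat c)>0$ and set $c_\delta(0)=\delta\hat c+(1-\delta)c(0)$, $w_\delta=(1-\delta)w$, so that $c_\delta(t)=\delta\hat c+(1-\delta)c(t)$ lies uniformly in the interior. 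The monotonicity and super-homogeneity hypotheses on $\bk$ (Assumption~\ref{ass:rrate conditions}(v),(vi)) then give $\bk\super{r}(c_\delta)\geq\psi(1-\delta)\,\bk\super{r}(c)$, which is precisely what controls the entropy cost. Continuity of $\I_0$ absorbs the shift in $c(0)$. Three further approximation layers (mollification in time, a lower bound on $\dot w$, and truncation of $\zeta$ to compact support) complete the density argument.

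\textbf{Handling the initial law.} You propose to prove an LDP for $(C\super{V}(0),W\super{V})$ and contract. The paper instead proves the \emph{conditional} upper bound given $C\super{V}(0)=\tilde c\super{V}(0)\to\tilde c(0)$ and then invokes Biggins' mixture lemma to combine it with the LDP for $\mu\super{V}$; for the lower bound it tilts $\mu\super{V}$ by $e^{Vz\cdot c(0)}$ with $z\in\partial\I_0(c(0))$ simultaneously with the dynamic tilt. Either route is viable; the paper's avoids having to set up the joint object carefully. Your state-independent tilt $k\super{r,V}\mapsto V\dot w\super{r}(s)$ also works, though the paper uses the state-dependent tilt $k\super{r,V}(c)e^{\zeta\super{r}(t)}$ with $\zeta=\log(\dot w/\bk(c))$, which has the advantage that the tilted limit ODE is exactly \eqref{eq:perturbed equation general}.
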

The precise set of assumptions will be stated in Section~\ref{subsec:assumptions}. We choose to work in the hybrid topology on the space of paths of bounded variation rather than the commonly used Skorohod topology since it is in some sense natural for jump processes, and the compactness criteria are very simple; we will introduce and comment on this space, topology and $\sigma$-algebra in more detail in Section~\ref{subsec:BV}.

As an immediate consequence of Theorem~\ref{th:ldp}, we obtain the large deviations for the concentrations:
\begin{corollary} Let $\mu\super{V}$ satisfy a large-deviation principle with rate function $\I_0$, and let Assumptions~\ref{ass:cont initial ldp} on $\mu\super{V}$ and Assumption~\ref{ass:rrate conditions} on $k,\bk$ hold. Then the process $C\super{V}$ satisfies a large-deviation principle in $\BV(0,T;\RR^\sY)$, equipped with the hybrid topology, with good rate functional $\I_0\big(c(0)\big) + \I(c)$, where
\begin{equation*}
  \I(c):=\inf_{\substack{w\in W^{1,1}(0,T;\RR^\sR_+):\\ \dot c = \Gamma \dot w}} \,\J(c,w).
\end{equation*}
\end{corollary}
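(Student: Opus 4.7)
The plan is to derive the corollary as a direct application of the contraction principle to Theorem~\ref{th:ldp}. The projection map $\pi \colon \BV(0,T;\RR^\sY\times\RR^\sR_+) \to \BV(0,T;\RR^\sY)$ defined by $\pi(c,w):=c$ is the natural candidate, and the first task is to check that it is continuous with respect to the hybrid topologies on the two spaces. Since the hybrid topology is the product of the weak topology on the derivative measures and pointwise (or similar) convergence of the initial values, and the projection simply discards the second component, continuity reduces to the trivial observation that projection is continuous in each factor. This should require essentially no work beyond invoking the characterisation of the hybrid topology given in Section~\ref{subsec:BV}.

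With continuity of $\pi$ in hand, I would apply the contraction principle to the LDP from Theorem~\ref{th:ldp}. Since $C\super{V}=\pi\big(C\super{V},W\super{V}\big)$ and $(C\super{V},W\super{V})$ satisfies an LDP with good rate function $\I_0(c(0))+\J(c,w)$, the image process $C\super{V}$ satisfies an LDP in $\BV(0,T;\RR^\sY)$ with good rate function
\begin{equation*}
  \tilde\I(c) \,=\, \inf_{w \,:\, \pi(c,w)=c}\, \big[\,\I_0(c(0))+\J(c,w)\,\big].
\end{equation*}
The constraint $\pi(c,w)=c$ is automatically satisfied for every admissible $w$, so the infimum is simply over all $w\in\BV(0,T;\RR^\sR_+)$. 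Because $\I_0(c(0))$ depends only on $c$, it factors out of the infimum.

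The final step is to simplify the infimum. If $\J(c,w)<\infty$, then by definition $(c,w)\in W^{1,1}$ and $\dot c=\Gamma\dot w$, so the infimum over $w\in\BV$ reduces to the infimum over $w\in W^{1,1}(0,T;\RR^\sR_+)$ with $\dot c=\Gamma\dot w$, which is exactly $\I(c)$. If no such $w$ exists then $\J(c,w)=\infty$ for every $w$ and $\I(c)=\infty$, so the identity $\tilde\I(c)=\I_0(c(0))+\I(c)$ holds in every case. Goodness of the resulting rate function is inherited from the contraction principle applied to a good rate function on a Hausdorff space.

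I do not anticipate any real obstacle here; the only point that demands care is the verification of continuity of $\pi$ for the hybrid topology and the observation that the infimum of a good rate function under a continuous map is attained, so that the projected rate function is itself good. Both are standard once the hybrid topology has been set up, so the corollary is indeed, as advertised, an immediate consequence of Theorem~\ref{th:ldp}.
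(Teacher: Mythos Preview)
Your proposal is correct and matches the paper's intended approach: the paper gives no explicit proof but introduces the corollary with the phrase ``As an immediate consequence of Theorem~\ref{th:ldp}'', which is precisely the contraction principle along the projection $(c,w)\mapsto c$. One small imprecision: the hybrid topology is $L^1$-convergence of the path together with vague convergence of the measure-valued derivative (not ``pointwise convergence of initial values''), but this does not affect your argument since both components of the definition trivially pass to coordinate projections.
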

Naturally, this result is consistent with the above mentioned articles, but now under a more general set of assumptions on the reaction rates. In particular, our assumptions allow for mass-action kinetics, as in~\cite{DupuisRamananWu2016}.

\paragraph{Initial conditions.} Throughout the paper we consider two different initial conditions. The main statement, Theorem~\ref{th:ldp} holds if the initial condition is random and satisfies a large-deviation principle. We will assume continuity of this initial large-deviation rate functional, which is essential to approximate the rate functional by sufficiently regular paths. For some results we shall consider a deterministic initial condition $C\super{V}(0)=\tilde c\super{V}(0)$ such that $c\super{V}(0)\to \tilde c(0) \in \RR^\sY$ for some limit initial condition. Those results can then be extended to random initial conditions via a mixture argument~\cite{Biggins2004}. For the integrated fluxes we set $W\super{V,r}(0)=0$ almost surely; we shall therefore always implicitly assume that any large-deviation rate blows up unless $w(0)=0$.

\paragraph{Strategy and overview.}

Section~\ref{sec:setting} describes the setting of the paper: the topology used for the dynamic large deviations, the precise assumptions on the propensities, reaction rates and initial condition. We then discuss existence and convergence of the path measures, which serves as a prerequisite for the large-deviations. Section~\ref{sec:analysis of rate} is dedicated to the analysis of the rate functional. Most importantly, it is shown that the rate functional has an alternative formulation as a convex dual, and that the rate functional can be approximated by curves that are sufficiently regular to be able to perform a change-of-measure. In a sense, these approximation lemmas are the core of the large-deviation proof. We shall see that the fact that the rate functional has a relatively simple formulation makes these proofs rather direct (which would be much more cumbersome when proving the large deviations of the concentrations only). Finally, Section~\ref{sec:ldp} is devoted to the proof of the large-deviation principle, Theorem~\ref{th:ldp}. It will be shown that one can always construct sufficiently steep compact cones on which the path measures place all but exponentially vanishing probability. We then show the lower bound of the measures with the random initial conditions via a double tilting argument, exploiting the approximation lemmas. After this, the upper bound is proven under deterministic initial conditions, which implies the large-deviations upper bound by a mixture argument.

\section{Setting}
\label{sec:setting}

In this section we specify the setting that we will be used in the paper. More specifically, we first introduce the hybrid topology used in the large deviations, and the precise assumptions on the propensities, reaction rates and initial condition that we will need. Finally, we construct the Markov process and its corresponding limit.

\subsection{The hybrid topology}
\label{subsec:BV}

For any path $(c,w)\in L^1(0,T;\RR^\sY\times\RR^\sR)$, the essential pointwise variation is
\begin{equation*}
  \epvar(c,w):=\inf_{\substack{(\tilde c, \tilde w) = (c, w)\\ t-\text{a.e.}}} \sup_{0=t_1<\hdots<t_K=T}\sum_{k=1}^K \big\lvert \big(\tilde c(t_{k+1}),\tilde w(t_{k+1})\big) - \big(\tilde c(t_{k}),\tilde w(t_{k})\big) \big\rvert,
\end{equation*}
and the space of paths of bounded variation is defined as:
\begin{equation*}
  \BV(0,T;\RR^\sY\times\RR^\sR):=\big\{ (c,w)\in L^1(0,T;\RR^\sY\times\RR^\sR) : \epvar(c,w)<\infty \big\}.
\end{equation*}
Some key properties of paths of bounded variation include, see~\cite{AmbrosioFuscoPallara2000}:
\begin{enumerate}[(i.)]
\item Left and right limits are well-defined, and one can (and we will) always take a \cadlag version. Wherever we write $(c(0),w(0))$, we implicitly mean the right limit $(c(0+),w(0+))$. 
\item Any path $(c(t),w(t))$ of bounded variation has a measure-valued derivative $\big(\dot c(\dd t),\dot w(\dd t)\big)$, and $\lVert(\dot c,\dot w)\rVert_{\TV} = \epvar(c,w)$.
\item $\BV(0,T;\RR^\sY\times\RR^\sR)$ equipped with the norm $\lVert\cdot\rVert_{L^1} + \epvar(\cdot)$ is a Banach space, and it is isometrically isomorphic to the dual of a Banach space.
\end{enumerate}

Because of the last point, the space can also be equipped with a weak-* topology, which amounts to vague convergence of both the paths $(c\super{n},w\super{n})$ and its derivatives $(\dot c,\dot w)$, defined by pairing with test functions $(\phi,\psi)\in C_0(0,T;\RR^\sY\times\RR^\sY)$. Naturally, weak-* compactness is simply characterised by norm-boundedness. Unfortunately, the weak-* topology is not metric, and hence difficult to use for stochastic analysis. Nevertheless, norm-boundedness is known to yield compactness in a slightly stronger topology~\cite[Prop.~3.13]{AmbrosioFuscoPallara2000}, which we call the \emph{hybrid} topology\footnote{The hybrid topology is usually called the weak-* topology. We name it differently to distinguish it from the functional analytically defined weak-* topology. The two topologies coincide on compact sets; in infinite dimensions the distinction becomes more subtle, see \cite{HeidaPattersonRenger2016}.}, defined through the convergence:
\begin{align*}
  (c\super{n},w\super{n})\xrightarrow{\text{hybrid}} (c,w)
    \iff
    &\lVert (c\super{n},w\super{n})-(c,w)\rVert_{L^1}\to 0     \qquad\text{and}\\
    &\big\langle (\phi,\psi),(\dot c\super{n},\dot w\super{n})\big\rangle \to \big\langle (\phi,\psi),(\dot c,\dot w)\big\rangle \quad \forall (\phi,\psi)\in C_0(0,T;\RR^\sY\times\RR^\sR).
\end{align*}
It turns out that the hybrid topology, although not metric, is `perfectly normal', which implies that the corresponding Borel $\sigma$-algebra behaves nicely, and all probabilistic tools that we will need are valid, see~\cite[Sec.~4]{HeidaPattersonRenger2016}.

\subsection{The assumptions}
\label{subsec:assumptions}

We now state the set of assumptions under which we will prove our main result. A central role is played by the sets of concentrations that are reachable via chemical reactions:
\begin{definition}[Stoichiometric simplex]\label{def:stoich simplex}
Let $c_0 \in \RR_+^\sY$
\begin{equation}
  \sS(c):=\{\tilde c=c+\Gamma w:w\in\RR_+^\sR, \tilde c\geq0\}
\end{equation}
\begin{equation}
  \sS_\epsilon(c)=\bigcup_{\tilde c \colon \lvert c - \tilde c\rvert\leq \epsilon}\sS\left(\tilde c\right)
\end{equation}
\end{definition}

For vectors in $\RR^\sY$ or $\RR^\sR$ we write $\geq$ for the partial ordering obtained by coordinate-wise inequalities. The set of assumptions on the propensities and reaction rates are the following:

\begin{assumption}[Conditions on reaction rates]\quad
\begin{enumerate}[(i)]
\item $k\super{r,V}(c)=0$ whenever $c_y< -V^{-1}\gamma\super{r}_y$ for at least one $y\in\sY$,   
  \label{it:rrate cutoff below}
\item $\sup_{c\in \sS_\epsilon\left(c(0)\right)} \sum_{r\in\sR} \abs{ \frac1V k\super{V,r}(c)-\bk\super{r}(c)}\to0$ for all $\epsilon> 0$ and $c(0)\in\RR^\sY_+$,
  \label{it:rrate convergence}
\item $\bk \in C^1(\RR^\sY_+;\RR^\sR_+)$, 
  \label{it:rrate continuous}
\item $\sup_{\tilde c\in\sS_\epsilon(c)}\lvert \bk(\tilde c)\rvert\vee \lvert \nabla_c\bk(c)\rvert<\infty$ for all $c\in\RR^\sY_+$ and $\epsilon>0$,
  \label{it:rrate upper bound}
\item $\bk(\hat c)\geq \bk(c)$ for all $\hat c\geq c$ in $\RR_+^\sY$,
  \label{it:rrate monotonicity}
\item there exists a strictly increasing bijection $\psi:\lbrack 0,1\rbrack\to [0,1]$ such that
  \begin{equation*}
    \bk\super{r}(\delta c) \geq \psi(\delta) \bk\super{r}(c)\qquad \text{for all } c\in\RR_+^\sY, \delta>0 \text{ and } r\in\sR.
  \end{equation*}
  \label{it:rrate super-homogeneity}
\end{enumerate}
\label{ass:rrate conditions}
\end{assumption}
The first assumption is needed to make sure that the stochastic model does not allow for negative concentrations. No assumptions related to boundedness or compactness of the stoichiometric simplices $\sS(c(0))$ are required; the only assumption that is needed is~\eqref{it:rrate upper bound}: that the reaction rates remain bounded on these simplices.
Furthermore, the superhomogeneity assumption~\eqref{it:rrate super-homogeneity} holds for most practical purposes, in particular for models with mass-action kinetics. We expect that the $C^1$-regularity can be relaxed to a locally Lipschitz condition, and that the monotonicity is only required in regions where the rates are small.
Taken together \eqref{it:rrate cutoff below} and \eqref{it:rrate convergence} imply that $c \geq 0$ is necessary in order to have $\bk\super{r}(c) > 0$.

The generality of the class of allowed reaction rates comes at the price of some regularity assumptions on the initial condition:

\begin{assumption}[Sufficiently regular initial LDP]\label{ass:cont initial ldp}
The initial measure $\mu\super{V}$ satisfies a large-deviation principle in $\RR^\sY_+$ with rate function $\I_0$ such that
\begin{enumerate}[(i)]
\item $\I_0$ is convex,
\item $\I_0$ is continuous,
\item $\mu\super{V}$ converges in distribution to $\delta_{\tilde c(0)}$ for some $\tilde c(0)\in\RR^\sY_+$,
\item $\mu\super{V}$ is exponentially tight (and hence $\I_0$ is good),
\label{it:initial exp tight}
\item $\I_0$ satisfies the conditions of Varadhan's Integral Lemma\cite[Th.~4.3.1]{Dembo1998} for linear functions, i.e. for all $z\in\RR^\sY$,
\begin{enumerate}
\item $\lim_{M\to\infty}\limsup_{V\to\infty} \tfrac1V\log\int_{z\cdot c(0)\geq M}\!\e^{V z\cdot c(0)}\,\mu\super{V}\big(c(0)\big)=-\infty$, or
\item $\limsup_{V\to\infty}\tfrac1V \log \int\!\e^{Va z\cdot c(0)}\,\mu\super{V}\big(c(0)\big)<\infty$ for some $a>1$.
\end{enumerate}
\item $\partial\I_0\big(c(0)\big)\neq\emptyset$ for all $c(0)\in\RR^\sY_+$.
\end{enumerate}
\end{assumption}

Although this list of assumptions is a bit technical, we point out that most assumptions mean that $C\super{V}(0)$ satisfy a `sufficiently nice' large-deviation principle. For thermodynamic properties, one is mostly interested in the large deviations where the process starts from the invariant measure \cite[Sec.~4]{Renger2017}, which often satisfies a large-deviation principle with all the needed assumptions. The continuity of $\I_0$ will be exploited (and are essential) in the approximation lemmas~\ref{lem:approx I rrate bounded below},\ref{lem:approx II smooth},\ref{lem:approx III flux bounded below} and \ref{lem:approx IV compact support}, and the last assumption is a technical requirement that is needed to prove the large-deviation lower bound for the mixture.

\subsection{Construction and convergence of the process}
\label{subsec:LLN}

We denote by $\PP\super{V}$ the path measure of the process $\big(C\super{V}(t),W\super{V}(t)\big)$ with jump dynamics as captured in the generator~\eqref{eq:generator} and initial distribution $\mu\super{V}\times\delta_0$. This is well-defined, as Assumptions~\ref{ass:rrate conditions}\eqref{it:rrate convergence} and \eqref{it:rrate upper bound} imply that the jump rates are uniformly bounded on each stoichiometric simplex $\sS(c)$, and hence \eqref{eq:generator} indeed generates a Markov process on $\BV(0,T;\RR^\sY\times\RR^\sR)$ (see \cite[Sect.~4]{HeidaPattersonRenger2016} for a discussion of the Borel $\sigma$-algebra of the hybrid topology, and related properties).

For technical reasons we shall also consider the dynamics obtained by perturbing the jump rates using exponentials of $\zeta\in C_c(0,T;\RR^\sR)$, leading to the time dependent generator
\begin{equation}
  (\sQ\super{V}_{\zeta,t}\Phi)(c,w) := \sum_{r\in\sR} k\super{V,r}(c) e^{\zeta(t)\cdot \gamma\super{r}} \big\lbrack\Phi(c+\tfrac1V\gamma\super{r},w+\tfrac1V\mathds1_{r})-\Phi(c,w)\big\rbrack.\\
\label{eq:perturbed generator}
\end{equation}
Since the jump rates remain uniformly bounded under the perturbation, this generator also defines a path measure $\PP\super{V}_\zeta$ with initial condition $\mu\super{V}\times\delta_0$.

In the interests of brevity we merely state the laws of large numbers for these measures, using the fact that the equations
\begin{equation}
  \begin{cases}
    \dot c(t)=\Gamma\dot w(t),\\
    \dot w(t)=\bk\super{r}\big(c(t)\big) e^{\zeta(t)\cdot\gamma\super{r}},\\
  \end{cases}
\label{eq:perturbed equation general}
\end{equation}
are well posed in $W^{1,1}(0,T;\RR^\sY\times\RR^\sR)$ for non-negative initial data; this may be checked by a Picard--Lindel\"of argument.
The basic ideas of the convergence proof go back to Kurtz~\cite{Kurtz1970,Kurtz1972}.

\begin{proposition} \label{th:unperturbed convergence det}
Let $\zeta\in C_c(0,T;\RR^\sR)$, Assumption~\ref{ass:rrate conditions} hold and suppose $\widetilde \mu\super{V}$ converges narrowly to $\delta_{(\tilde c(0),0)}$. Then the laws $\widetilde\PP_\zeta\super{V}$ of the Markov processes with initial conditions $\widetilde \mu\super{V}$ and dynamics given by~\eqref{eq:perturbed generator} converge narrowly to the delta measure concentrated on the $(c,w) \in W^{1,1}(0,T;\RR^\sY\times\RR^\sR)$ that is the unique solution to \eqref{eq:perturbed equation general} with initial data $(\tilde c(0), 0)$.
\end{proposition}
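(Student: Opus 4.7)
The plan is to run a standard Kurtz-type proof, adapted to the hybrid topology. Throughout, fix $r\in\sR$ and denote $\widetilde k\super{V,r}_t(c):=V^{-1}k\super{V,r}(c)\,e^{\zeta(t)\cdot\gamma\super{r}}$. The starting point is the Dynkin decomposition: since $W\super{V,r}$ counts rescaled jumps, the process
\[
  M\super{V,r}(t):=W\super{V,r}(t)-\int_0^t \widetilde k\super{V,r}_s\big(C\super{V}(s)\big)\,\dd s
\]
is a martingale with predictable quadratic variation $\langle M\super{V,r}\rangle_t=V^{-1}\int_0^t \widetilde k\super{V,r}_s(C\super{V}(s))\,\dd s$. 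The pathwise identity $C\super{V}(t)=C\super{V}(0)+\Gamma W\super{V}(t)$ holds almost surely and will be used repeatedly to feed bounds on $W\super{V}$ back into bounds on $C\super{V}$.

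First I would establish a priori bounds via a localisation argument. Let $\tau_R\super{V}:=\inf\{t\in[0,T]:|C\super{V}(t)|>R\}$. Picking $R$ so large that the (compact) $\sS_1(\tilde c(0))$-ball of radius $R$ contains the orbit of the deterministic ODE \eqref{eq:perturbed equation general} with some margin, Assumption~\ref{ass:rrate conditions}\eqref{it:rrate convergence}--\eqref{it:rrate upper bound} combined with boundedness of $\zeta$ yield a constant $K_R<\infty$ with $\widetilde k\super{V,r}_s(c)\le K_R$ for all $c\in\sS_\epsilon(c(0))\cap\{|c|\le R\}$ and all sufficiently large $V$. On $[0,T\wedge\tau_R\super{V}]$ this gives $\EE W\super{V,r}(t\wedge\tau_R\super{V})\le K_R T$ and $\EE\langle M\super{V,r}\rangle_T\le V^{-1}K_R T$, so Doob's inequality implies $\sup_{t\le T\wedge\tau_R\super{V}}|M\super{V,r}(t)|\to0$ in probability. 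The continuity equation then forces $C\super{V}(t\wedge\tau_R\super{V})$ to stay within any prescribed neighbourhood of the deterministic orbit, and hence $\PP(\tau_R\super{V}\le T)\to0$. Consequently, the total variations $\epvar(C\super{V},W\super{V})\le|\Gamma|\sum_r W\super{V,r}(T)+|C\super{V}(0)|$ are bounded in probability, and by the BV-boundedness compactness criterion this gives tightness of the laws $\widetilde\PP\super{V}_\zeta$ in the hybrid topology.

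To identify the limit, take any subsequential narrow limit and, by Skorohod representation on a perfectly normal space (see~\cite[Sec.~4]{HeidaPattersonRenger2016}), realize it as an a.s.\ hybrid-limit $(c^*,w^*)$ of the $(C\super{V},W\super{V})$. The $L^1$ convergence $C\super{V}\to c^*$ together with continuity of $\bk$ and dominated convergence yield
\[
  \int_0^t \widetilde k\super{V,r}_s\big(C\super{V}(s)\big)\,\dd s \;\longrightarrow\; \int_0^t \bk\super{r}\big(c^*(s)\big)\,e^{\zeta(s)\cdot\gamma\super{r}}\,\dd s,
\]
using Assumption~\ref{ass:rrate conditions}\eqref{it:rrate convergence} to replace $V^{-1}k\super{V,r}$ by $\bk\super{r}$. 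Vague convergence of the derivatives $\dot w\super{V,r}\rightharpoonup \dot w^{*,r}$ paired with the vanishing of $M\super{V,r}$ identifies $\dot w^{*,r}(s)=\bk\super{r}(c^*(s))\,e^{\zeta(s)\cdot\gamma\super{r}}$, while the continuity equation passes to the limit to give $c^*=\tilde c(0)+\Gamma w^*$. Thus $(c^*,w^*)$ solves~\eqref{eq:perturbed equation general}; by the Picard--Lindel\"of uniqueness guaranteed by Assumption~\ref{ass:rrate conditions}\eqref{it:rrate continuous}, this solution is unique. Every subsequence therefore has a further subsequence converging to the same deterministic limit, so the whole family $\widetilde\PP\super{V}_\zeta$ converges narrowly to $\delta_{(c,w)}$.

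The main obstacle is the chicken-and-egg between controlling $W\super{V}$ and $C\super{V}$: reaction rates are only bounded locally in $c$, yet $C\super{V}$ can a priori excurse before the fluid limit has been established. The stopping-time localisation together with the continuity equation resolves this, provided one chooses $R$ comfortably larger than the deterministic orbit. A minor secondary subtlety is that vague convergence of $\dot W\super{V}$ (a sum of Diracs of mass $1/V$) to an absolutely continuous limit must be justified via the compensator, but this is exactly what the martingale estimate provides.
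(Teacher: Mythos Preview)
The paper does not give a proof of this proposition; it merely states the result, remarks that the ODE~\eqref{eq:perturbed equation general} is well posed by Picard--Lindel\"of, and cites Kurtz's classical work for the convergence. Your approach---Dynkin martingale decomposition, Doob's inequality on the quadratic variation, and a Gronwall comparison with the ODE---is precisely the standard Kurtz argument the paper is pointing to, so in spirit you are doing exactly what the authors intend.

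Two remarks on the execution. First, the localisation via $\tau_R\super{V}$ is not needed here and your parenthetical ``(compact)'' is incorrect: the paper explicitly allows unbounded stoichiometric simplices. What saves you is Assumption~\ref{ass:rrate conditions}\eqref{it:rrate upper bound}, which bounds $\bk$ and $\nabla\bk$ on the \emph{whole} simplex $\sS_\epsilon(\tilde c(0))$; combined with~\eqref{it:rrate convergence} this gives a uniform bound on $V^{-1}k\super{V,r}$ there for large $V$. Since $C\super{V}$ almost surely stays in $\sS(C\super{V}(0))\subset\sS_\epsilon(\tilde c(0))$ once $\lvert C\super{V}(0)-\tilde c(0)\rvert<\epsilon$, the rate bound is global and the chicken-and-egg problem you flag does not actually arise.

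Second, and more importantly, the tightness--Skorohod--identification block is both redundant and contains a genuine gap. Redundant, because the Gronwall step you allude to (``the continuity equation then forces $C\super{V}$ to stay near the deterministic orbit'') already yields $\sup_{t\le T}\lvert(C\super{V},W\super{V})(t)-(c,w)(t)\rvert\to0$ in probability, which together with the TV bound gives hybrid convergence in probability to the deterministic limit, hence narrow convergence to $\delta_{(c,w)}$ directly. A gap, because the hybrid topology is not metrisable, and the cited reference establishes perfect normality but not a Skorohod representation theorem; invoking one here is unjustified. Since you do not actually need Skorohod, simply drop that block and conclude from the Gronwall estimate.
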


Note that this result includes the cases of random initial conditions $\widetilde\mu\super{V} = \mu\super{V}$ as in Assumption~\ref{ass:cont initial ldp}, as well as the case of deterministic initial conditions $\widetilde\mu\super{V}=\delta_{(\tilde c\super{V}(0),0)}$ where $\tilde c\super{V}(0)\to \tilde c(0)$.
Note also that narrow convergence of probability measures on a metric space (convergence in distribution of the associated random variables) to a deterministic limit implies convergence in probability; this can readily be generalised to the hybrid topology on the space of bounded variation paths.

\section{Analysis of the rate functional}
\label{sec:analysis of rate}
A detailed knowledge of the properties of the rate function allows for a more concise presentation of the LDP, so these properties are developed here before we embark on the stochastic aspects of the proof. It will be practical to prove a dual, variational formulation of the rate functional:
\begin{align}
  &\tilde\J(c,w) = \begin{cases}
                      \displaystyle\sup_{\zeta\in C^1_c(0,T;\RR^\sR)} G(c,w,\zeta), &\text{if } \dot c=\Gamma\dot w, \\
                      \infty, &\text{otherwise},
                    \end{cases}
\label{eq:LDP rate as sup}
\intertext{where}
  &G(c,w,\zeta):=\int_0^T\!\big\lbrack \zeta(t)\cdot \dot w(\dd t) - H\big(c(t),\zeta(t)\big) \big\rbrack\,\dd t,
\label{eq:G}\\
  &H(c,\zeta):= \sum_{r\in \sR} \bk\super{r}(c) \big(\e^{\zeta\super{r}}-1\big).
\label{eq:H}
\end{align}

\begin{remark}
$\tilde{\J} \colon \BV(0,T;\RR^\sY\times\RR^\sR_+) \rightarrow [0,\infty]$ is lower semincontinuous with respect to the hybrid topology on $\BV(0,T;\RR^\sY\times\RR^\sR_+)$ since for any $\zeta\in C_0(0,T;\RR^\sR)$ the function $(c,w) \mapsto G(c,w,\zeta)$ is hybrid continuous.
\end{remark}

\begin{remark} One can also rewrite the rate functional as a convex dual without restricting to pairs that satisfy the continuity equation: 
\begin{equation*}
  \tilde\J(c,w)=\sup_{\substack{\xi\in C_c^1(0,T;\RR^\sY) \\ \zeta\in C_c^1(0,T;\RR^\sR)}} \int_0^T\!\zeta(t)\cdot \dot w(\dd t) + \int_0^T\!\xi(t)\cdot \dot c(\dd t) - \int_0^T\!H(c(t),\zeta(t))\,\dd t.
\end{equation*}
A straight-forward calculation then shows that the rate functional reduces to \eqref{eq:LDP rate} if the continuity equation is satisfied, and $\infty$ otherwise. The variation over the dual variable to $\dot c$ corresponds in some sense to zero-probability fluctuations in the continuity equation. Therefore it is more natural to omit that supremum, which also shortens notation considerably.
\end{remark}

\subsection{Characterisation of the domain}
\label{subsec:domain}

This section is devoted to the proof that both formulations of the rate functional coincide. For the relative entropy formulation $\J$ of the rate functional, it is built into the definition~\eqref{eq:LDP rate} that $(c,w)\in W^{1,1}(0,T;\RR^\sY\times\RR^\sR_+)$ for finite $\J(c,w)$. The following Lemma says that the concentrations remain non-negative.
\begin{lemma} Let $(c,w)\in\BV(0,T;\RR^\sY\times\RR^\sR_+)$ and $c(0)\geq0$. If $\J(c,w)<\infty$ then $c\geq0$.
\label{lem:nonnegative concentrations}
\end{lemma}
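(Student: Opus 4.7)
\emph{Plan.} I would argue by contradiction. Suppose some $t_1\in(0,T]$ satisfies $c(t_1)\not\geq 0$ componentwise. Since $\J(c,w)<\infty$ forces $(c,w)\in W^{1,1}$, I may fix a continuous (in fact absolutely continuous) representative of $c$ and set
\[
  \tau := \sup\{\,t\in[0,t_1] : c(t)\geq 0\,\}.
\]
Continuity together with $c(0)\geq 0$ makes $\tau$ well defined with $c(\tau)\geq 0$, while the assumption $c(t_1)\not\geq 0$ gives $\tau<t_1$ and $c(t)\not\geq 0$ for every $t\in(\tau,t_1]$.

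Next, I would invoke the observation recorded immediately after Assumption~\ref{ass:rrate conditions}: items~\eqref{it:rrate cutoff below} and \eqref{it:rrate convergence} together imply $\bk\super{r}(c)>0\Rightarrow c\geq 0$. Contrapositively, $\bk\super{r}(c(t))=0$ for every $r\in\sR$ and every $t\in(\tau,t_1]$. Finiteness of $\J(c,w)$ forces $\dot w(t)\ll \bk(c(t))$ for a.e.\ $t$, since otherwise the $j\super{r}>0$, $\hat j\super{r}=0$ branch of $s$ would contribute $+\infty$ over a set of positive measure. Hence $\dot w\super{r}(t)=0$ for a.e.\ $t\in(\tau,t_1]$ and every $r$.

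The continuity equation $\dot c=\Gamma\dot w$ built into the first branch of~\eqref{eq:LDP rate} then yields $\dot c\equiv 0$ a.e.\ on $(\tau,t_1]$, so $c(t_1)=c(\tau)\geq 0$, contradicting $c(t_1)\not\geq 0$.

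I do not anticipate any genuine obstacle; the only mildly delicate points are (i) interpreting $\bk$ as vanishing off $\RR^\sY_+$, which is exactly what the remark after Assumption~\ref{ass:rrate conditions} provides, and (ii) selecting a continuous representative of $c$ before introducing $\tau$, which is automatic for $W^{1,1}$ paths.
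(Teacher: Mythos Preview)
Your argument is correct and follows essentially the same contradiction strategy as the paper: both use that $\bk$ vanishes once a concentration component becomes negative, so the corresponding fluxes must vanish by $\dot w\ll\bk(c)$, which prevents any further decrease. The only cosmetic difference is that the paper tracks a single species $y_1$ and a single consuming reaction $r_1$ on an interval where $c_{y_1}\le 0$ strictly decreases, whereas you work with the full vector $c$, take the last time $\tau$ at which $c\ge 0$, and conclude that \emph{all} rates vanish on $(\tau,t_1]$ so that $\dot c\equiv 0$ there; your packaging is slightly cleaner but the content is the same.
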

\begin{proof} Assume on the contrary that one may find $t_1,y_1$ such that $c_{y_1}(t_1) < 0$.  By definition $\J(c,w) < \infty$ implies $c_{y_1}$ (has a representative that) is absolutely continuous so one may take $0 \leq t_2 < t_1$ such that $0 \geq c_{y_1}(t_2) > c_{y_1}(t_1)$.
This implies the existence of $r_1 \in\sR$ such that $\gamma_y\super{r} < 0$ and $\int_{t_2}^{t_1} \dot w\super{r_1}(s) \dd s > 0$ so \eqref{eq:LDP rate} requires $\bk\super{r_1}(c(s)) > 0$ almost everywhere in $[t_2,t_1]$.
However from Assumption~\ref{ass:rrate conditions} parts \eqref{it:rrate cutoff below} and \eqref{it:rrate convergence} one sees that $\bk\super{r_1}(c(s)) = 0$ for all $s \in [t_2,t_1]$.

\end{proof}

In order to compare $\J$ to the variational formulation~$\tilde\J$ we need to prove the same regularity result for $\tilde\J$:

\begin{lemma} Let $(c,w)\in\BV(0,T;\RR^\sY_+\times\RR^\sR_+)$. If $\tilde\J(c,w)<\infty$ then $(c,w)\in W^{1,1}(0,T;\RR^\sY\times\RR^\sR_+)$ and $c\geq0$.
\label{lem:regularity}
\end{lemma}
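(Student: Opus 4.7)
The plan is to use the dual formulation~\eqref{eq:LDP rate as sup} and construct, in turn, three families of test functions $\zeta\in C^1_c(0,T;\RR^\sR)$ that force $G(c,w,\zeta)\to+\infty$ whenever the desired regularity fails. The continuity equation $\dot c=\Gamma\dot w$ is built into $\tilde\J$, so only $W^{1,1}$-regularity of $w$ (hence of $c$) and pointwise non-negativity of $c$ remain.

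First I show $\dot w\super{r}$ is a non-negative measure for every $r\in\sR$. Suppose not: $\dot w\super{r}(A)<0$ for some Borel $A\subset(0,T)$. By mollification and Radon regularity pick $\phi_n\in C^1_c(0,T)$ with $0\le\phi_n\le1$ and $\int\phi_n\,\dd\dot w\super{r}\to\dot w\super{r}(A)$, and take $\zeta$ whose $r$-th component is $-\lambda\phi_n$ and whose other components vanish. Then $\int_0^T\zeta\cdot\dd\dot w\to\lambda\lvert\dot w\super{r}(A)\rvert$, while $-\int_0^T H(c,\zeta)\,\dd t=(1-e^{-\lambda})\int_0^T\phi_n\,\bk\super{r}(c)\,\dd t\ge0$, so $G(c,w,\zeta)\to\infty$ in the iterated limit $n\to\infty$, $\lambda\to\infty$, contradicting $\tilde\J(c,w)<\infty$.

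Next I show $\dot w\super{r}$ has no Lebesgue-singular part. Decompose $\dot w\super{r}=f\super{r}\,\dd t+\mu_s\super{r}$ (both non-negative by the previous step) and suppose $m:=\mu_s\super{r}([0,T])>0$. Since $c\in\BV$ has bounded range in $\RR^\sY_+$, $M:=\sup_{t\in[0,T]}\bk\super{r}(c(t))<\infty$ by continuity of $\bk$. For $\epsilon>0$, using the Lebesgue-singularity of $\mu_s\super{r}$ pick a compact $K$ with $\mu_s\super{r}(K)\ge m/2$ and $\lvert K\rvert=0$, then an open $U\supset K$ with $\lvert U\rvert<\epsilon$, and finally $\phi\in C^1_c(U)$ with $0\le\phi\le1$ and $\phi\equiv1$ on $K$. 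Testing with $\zeta$ whose $r$-th component is $\lambda\phi$ yields
\[
\int_0^T\zeta\cdot\dd\dot w\ge\lambda\, m/2,\qquad \int_0^T H(c,\zeta)\,\dd t\le(e^\lambda-1)\,\epsilon\,M.
\]
Choosing $\epsilon:=e^{-2\lambda}/M$ gives $G\ge\lambda\, m/2-e^{-\lambda}$, which tends to $\infty$ as $\lambda\to\infty$ — the desired contradiction. Hence $w\in W^{1,1}(0,T;\RR^\sR_+)$, and the continuity equation yields $c\in W^{1,1}(0,T;\RR^\sY)$ as well.

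For the non-negativity of $c$, replay the proof of Lemma~\ref{lem:nonnegative concentrations}: if $c_{y_1}(t_1)<0$, absolute continuity gives $t_2<t_1$ with $0\ge c_{y_1}(t_2)>c_{y_1}(t_1)$, so some $r_1$ with $\gamma\super{r_1}_{y_1}<0$ has $f\super{r_1}>0$ on a positive-measure subset $B\subset[t_2,t_1]$; Assumptions~\ref{ass:rrate conditions}\eqref{it:rrate cutoff below} and \eqref{it:rrate convergence} then force $\bk\super{r_1}(c(s))=0$ on $[t_2,t_1]$, and a test $\zeta\super{r_1}=\lambda\phi$ with $\phi\in C^1_c$ approximating $\one_B$ makes $G\ge\lambda\int_B f\super{r_1}\phi\,\dd t\to\infty$ while $H$ vanishes on $\supp\phi$. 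The main obstacle is the singular-part step, where the exponential growth $e^\lambda$ of $H$ in $\zeta$ must be damped by choosing the Lebesgue support of $\zeta$ exponentially small in $\lambda$; this works precisely because $\bk\super{r}(c(\cdot))$ is uniformly bounded on $[0,T]$ thanks to $c\in\BV$.
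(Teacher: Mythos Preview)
Your proof is correct and follows essentially the same three-stage strategy as the paper: test with $\zeta\super{r}=-\lambda\phi$ to rule out a negative part of $\dot w\super{r}$, test with $\zeta\super{r}=\lambda\phi$ supported on a set of small Lebesgue measure to rule out a singular part, and replay Lemma~\ref{lem:nonnegative concentrations} for $c\geq0$. The only cosmetic difference is the parameterisation in the singular-part step (you fix $\lambda$ and take the support $\epsilon\sim e^{-2\lambda}$, whereas the paper takes $|G_n|\leq 1/n$ and sets the amplitude to $-\log|G_n|$), and your expression $(1-e^{-\lambda})\int\phi_n\bk\super{r}(c)\,\dd t$ in the first step should strictly read $\int(1-e^{-\lambda\phi_n})\bk\super{r}(c)\,\dd t$, though the sign conclusion is unaffected.
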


\begin{proof}
Let $(c,w)\in\BV(0,T;\RR^\sY_+\times\RR^\sR_+)$ and $\tilde\J(c,w)<\infty$; the proof is carried out in three stages:
\begin{enumerate}
\item $\dot w$ is a non-negative measure,
\item $\dot w(\dd t) = \dot w(t)\,\dd t$ for some density $\dot w\in L^1(0,T;\RR^\sR_+)$, 
\item $\dot c(t) = \Gamma \dot w(t)$, 
\item $c \geq 0$.
\end{enumerate}
For the first point note that the existence of $\dot w$ as a (signed) vector measure of finite total variation follows from \cite[Thrm. 2.13]{HeidaPattersonRenger2016}.  Suppose now that there is some $r\in\sR$ and a measurable set $A \subset (0,T)$ such that $\dot w\super{r}(A) < 0$.
Using the Hahn decomposition and the regularity of Borel measures on the metric space $(0,T)$ (\cite[Thrm. 7.1.7]{Bogachev2007} or \cite[Lem. 1.34]{Kallenberg2002}) one has the existence of a closed $B \subset A$ with $\dot w\super{r}(B) < 0$.  Define $\zeta_n \in C_c^1(0,T;\RR_-^\sR)$ by
\begin{equation*}
\zeta_n\super{r^\prime}(t) =
\begin{cases}
0 & r^\prime \neq r \\
-n \varphi(t) & r^\prime = r
\end{cases}
\end{equation*}
for some $\varphi \in C^1_c(0,T; [0,1])$ such that $\II_B \leq \varphi \leq \II_A$.
On can now check that $\lim_n G(c,w,\zeta_n) = + \infty$, which contradicts $\tilde\J < \infty$ so there cannot be any $r$ for which $\dot w\super{r}$ takes negative values.

For the absolute continuity suppose that there is an $r\in\sR$ and a measurable set $A\subset (0,T)$ such that $\dot w\super{r}(A) = \delta > 0$, but $\abs{A} = 0$, where we write $\abs{\cdot}$ for Lebesgue measure.
By the regularity result already mentioned in this proof we have the existence of closed sets $F_n$ and open sets $G_n$ such that $F_n \subset A \subset G_n$ with $\dot w\super{r}(G_n \setminus F_n) < \frac1n$ and $\abs{G_n} \leq \frac1n$.
Define $\zeta_n \in C_c^1(0,T;\RR_+^\sR)$ by
\begin{equation*}
\zeta_n\super{r^\prime}(t) =
\begin{cases}
0 & r^\prime \neq r \\
-\log\abs{G_n} \varphi_n(t) & r^\prime = r
\end{cases}
\end{equation*}
for some $\varphi \in C^1_c(0,T; [0,1])$ such that $\II_{F_n} \leq \varphi \leq \II_{G_n}$ to get a contradiction as in the proof that $\dot w \geq 0$.
The Radon-Nikodym theorem thus allows us with a little abuse of notation to write $\dot w\super{r}(\dd t) = \dot w\super{r}(t)\dd t$ for $\dot w\super{r} \in L^1(0,T;\RR∏)$.

The proof that $c(t)\geq0$ is the same as in Lemma~\ref{lem:nonnegative concentrations}, where now we have on the non-null set $B\subset(0,T)$,
\begin{equation*}
  \J(c,w)\geq \sup_{\zeta\super{r}\in C_c^1(B)} \int_B\! \zeta\super{r}(t)\cdot\dot w\super{r}(t) - 0 = \infty.
\end{equation*}
\end{proof}

\begin{proposition} $\J=\tilde\J$.
\label{prop:J=tilde J}
\end{proposition}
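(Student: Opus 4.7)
The plan is to exploit the pointwise Legendre duality between the relative entropy $s(\cdot\mid\hat j)$ and the log-moment generating function $\zeta\mapsto\hat j(\e^\zeta-1)$, and then lift this identity to the time-integrated functionals by a truncation and mollification argument. By Lemma~\ref{lem:regularity} and Lemma~\ref{lem:nonnegative concentrations}, both $\{\J<\infty\}$ and $\{\tilde\J<\infty\}$ are contained in the common effective domain of paths $(c,w)\in W^{1,1}(0,T;\RR^\sY\times\RR^\sR_+)$ with $c\geq 0$ and $\dot c=\Gamma\dot w$, so outside this set both functionals equal $+\infty$ and equality is trivial. Inside the common domain the starting point is the elementary identity
\begin{equation*}
  s(j\mid\hat j)=\sup_{\zeta\in\RR}\big[\zeta j-\hat j(\e^\zeta-1)\big],\qquad j,\hat j\geq 0,
\end{equation*}
attained at $\zeta^*=\log(j/\hat j)$ with the natural conventions $\log 0=-\infty$ and $\log(\text{positive}/0)=+\infty$. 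Summing over $r\in\sR$ gives $\S(\dot w(t)\mid\bk(c(t)))=\sup_{\zeta\in\RR^\sR}[\zeta\cdot\dot w(t)-H(c(t),\zeta)]$ for a.e.\ $t$.

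The inequality $\tilde\J\leq\J$ is immediate from Fenchel: for every fixed $\zeta\in C_c^1(0,T;\RR^\sR)$ the integrand of $G(c,w,\zeta)$ is pointwise dominated by $\S(\dot w(t)\mid\bk(c(t)))$, so integrating and taking the supremum over $\zeta$ yields $\tilde\J(c,w)\leq\J(c,w)$.

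For the reverse inequality I would construct a near-maximising sequence in $C_c^1$. Define the componentwise pointwise maximiser $\zeta^*$ and its truncation $\zeta^*_n(t):=\max(-n,\min(n,\zeta^*(t)))$. The derivative $\partial_\zeta[\zeta j-\hat j(\e^\zeta-1)]=j-\hat j\e^\zeta$ shows that the integrand $\zeta^*_n(t)\cdot\dot w(t)-H(c(t),\zeta^*_n(t))$ is monotone non-decreasing in $n$ and converges pointwise to $\S(\dot w(t)\mid\bk(c(t)))$ at every $t$, including the degenerate regimes $\zeta^{*,(r)}=\pm\infty$: truncation at $-n$ on $\{\dot w^{(r)}=0<\bk^{(r)}(c)\}$ gives $\bk^{(r)}(c)(1-\e^{-n})\nearrow\bk^{(r)}(c)=s(0\mid\bk^{(r)}(c))$, while truncation at $+n$ on $\{\dot w^{(r)}>0=\bk^{(r)}(c)\}$ gives $n\dot w^{(r)}\nearrow+\infty=s(\dot w^{(r)}\mid 0)$. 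Monotone convergence then produces
\begin{equation*}
  \int_0^T\!\big[\zeta^*_n(t)\cdot\dot w(t)-H(c(t),\zeta^*_n(t))\big]\,\dd t\;\nearrow\;\J(c,w).
\end{equation*}
Each $\zeta^*_n$ is bounded and measurable; I would approximate it in $C_c^1(0,T;\RR^\sR)$ by multiplying with a smooth cutoff vanishing near $\{0,T\}$ and convolving in time with a standard mollifier. Because $\|\zeta^*_n\|_\infty\leq n$ and $\bk(c(\cdot))$ is bounded on the compact image of $c$ by Assumption~\ref{ass:rrate conditions}\eqref{it:rrate upper bound}, dominated convergence transfers the limit inside $G$. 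A diagonal selection then gives $\tilde\J(c,w)\geq\J(c,w)$.

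The main obstacle is the careful bookkeeping in the degenerate regimes $\dot w^{(r)}=0$ or $\bk^{(r)}(c)=0$, where $\zeta^*$ is infinite. Verifying that truncations at $\pm n$ recover the boundary values $s(0\mid\hat j)=\hat j$ and $s(j\mid 0)=+\infty$ is exactly what makes the monotone convergence argument go through; once this is in place, the smoothing and cutoff steps are routine because the relevant integrands are uniformly bounded.
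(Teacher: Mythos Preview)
Your proposal is correct and follows essentially the same route as the paper: reduce to the common $W^{1,1}$ domain via Lemmas~\ref{lem:nonnegative concentrations} and~\ref{lem:regularity}, invoke the pointwise Legendre duality $s(j\mid\hat j)=\sup_\zeta[\zeta j-\hat j(\e^\zeta-1)]$, truncate the pointwise maximiser at $\pm n$ and use monotone convergence, then pass from bounded measurable $\zeta$ to $C^1_c$ by mollification plus a smooth time-cutoff with dominated convergence. The paper organises this as a chain of equalities $\sup_{\text{all }\zeta}=\sup_{L^\infty}=\sup_{C^1_b}=\sup_{C^1_c}$ rather than as two inequalities, but the technical content is identical; your two-sided truncation $\max(-n,\min(n,\zeta^*))$ is in fact slightly cleaner than the paper's one-sided $\zeta^*\wedge n$, since it directly ensures $\zeta_n\in L^\infty$.
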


\begin{proof}
Let $(c,w)\in\BV(0,T;\RR^\sY_+\times\RR^\sR_+)$ (possibly with $\J(c,w)=\infty$). If $(c,w)\notin W^{1,1}(0,T;\RR^\sY_+\times\RR^\sR_+)$ then by Lemma~\ref{lem:regularity} both $\tilde\J(c,w)=\infty=\J(c,w)$. Now assume that $(c,w)\in W^{1,1}(0,T;\RR^\sY_+\times\RR^\sR_+)$. We can then write $G(c,w,\zeta)=\sum_{r\in\sR}\int_0^T\!g\super{r}\big(c(t),\dot w\super{r}(t),\zeta\super{r}(t)\big)\,\dd t$ where
\begin{equation*}
  g\super{r}(c,j\super{r},\zeta\super{r}) := \zeta\super{r} j\super{r} - \bk\super{r}(c)\big(\e^{\zeta\super{r}}-1\big).
\end{equation*}
We now show that
\begin{align}
  \J(c,w)&=\sup_{\zeta:(0,T)\to\RR^\sR} G(c,w,\zeta)
   = \sup_{\zeta\in L^\infty(0,T;\RR^\sR)} G(c,w,\zeta) \notag\\
  &= \sup_{\zeta\in C^1_b(0,T;\RR^\sR)} G(c,w,\zeta)
   = \sup_{\zeta\in C^1_c(0,T;\RR^\sR)} G(c,w,\zeta)
   = \tilde\J(c,w).
\label{eq:sup over different sets}
\end{align}

The first equality in~\eqref{eq:sup over different sets} can be calculated directly through the pointwise supremum. For the second equality, we construct, for each $t\in(0,T)$ and $r\in\sR$, an explicit (pointwise) maximising sequence $\zeta\super{r}_n(t)$ for $\sup_{\zeta\super{r}} g\super{r}\big(c(t),\dot w\super{r}(t),\zeta\super{r}(t)\big)$ as, see Figure~\ref{fig:gr},
\begin{equation*}
  \zeta\super{r}_n(t):=
    \begin{cases}
      \log\frac{w\super{r}(t)}{\bk\super{r}(c(t))} \wedge n,    &\bk\super{r}(c(t))>0 \text{ and } \dot w\super{r}(t)>0,\\
      -n,                                                       &\bk\super{r}(c(t))>0 \text{ and } \dot w\super{r}(t)=0,\\
      n,                                                        &\bk\super{r}(c(t))=0 \text{ and } \dot w\super{r}(t)>0,\\
      0,                                                        &\bk\super{r}(c(t))=0 \text{ and } \dot w\super{r}(t)=0.
    \end{cases}
\end{equation*}
\begin{figure}[h]
  \centering
  \subfloat[$\bk\super{r}(c)>0,j\super{r}>0$]{%
    \quad
    \begin{tikzpicture}[scale=0.7]
      \tikzstyle{every node}=[font=\footnotesize]
      \draw[->](-2,0)--(2,0);
      \draw(0,-1)--(0,1);
      \draw(-1.2,-1) ..controls(0.43,0.4) and (1.4,1.35)..(1.9,-1);
      \draw[dotted](0,0.45)--(1,0.45); 
    \end{tikzpicture}
    \,
  }
  \subfloat[$\bk\super{r}(c)>0,j\super{r}=0$]{%
    \quad
    \begin{tikzpicture}[scale=0.7]
      \tikzstyle{every node}=[font=\footnotesize]
      \draw[->](-2,0)--(2,0);
      \draw(0,-1)--(0,1);
      \draw(-2,0.42) ..controls(0,0.45) and (0.4,-0.3)..(0.9,-1);
      \draw[dotted](0,0.45)--(-2,0.45); 
    \end{tikzpicture}
    \,
  }
  \subfloat[$\bk\super{r}(c)=0,j\super{r}>0$]{%
    \quad
    \begin{tikzpicture}[scale=0.7]
      \tikzstyle{every node}=[font=\footnotesize]
      \draw[->](-2,0)--(2,0);
      \draw(0,-1)--(0,1);
      \draw(-2,-1)--(2,1);
    \end{tikzpicture}
    \,
  }
  \subfloat[$\bk\super{r}(c)=0,j\super{r}=0$]{%
    \quad
    \begin{tikzpicture}[scale=0.7]
      \tikzstyle{every node}=[font=\footnotesize]
      \draw[->](-2,0)--(2,0);
      \draw[thick](-2,0)--(2,0);
      \draw(0,-1)--(0,1);
    \end{tikzpicture}
    \quad
  }
  \caption{The function $\zeta\super{r}\mapsto g\super{r}(c,j\super{r},\zeta\super{r})$.}
  \label{fig:gr}
\end{figure}

\noindent Then each $\zeta_n\in L^\infty(0,T;\RR^\sR)$ and $g\super{r}\big(c(t),\dot w\super{r}(t),\hat\zeta\super{r}(t)\big)$ is non-decreasing in $n$ and non-negative. Moreover, $g\super{r}\big(c(t),\dot w\super{r}(t),\hat\zeta\super{r}(t)\big)$ converges pointwise in $t\in(0,T)$ and $r\in\sR$ as $n\to\infty$ to the pointwise supremum. Hence by monotone convergence~
\begin{equation*}
  \lim_{n\to\infty} \sum_{r\in\sR} \int_0^T\! g\super{r}\big(c(t),\dot w\super{r}(t),\hat\zeta\super{r}_n(t)\big) = \sum_{r\in\sR} \int_0^T\! \sup_{\zeta\super{r}} g\super{r}\big(c(t),\dot w\super{r}(t),\zeta\super{r}(t)\big)\,\dd t.
\end{equation*}
This shows that the pointwise supremum on the left of \eqref{eq:sup over different sets} can be taken over $L^\infty(0,T;\RR^\sR)$.

For the third equality in \eqref{eq:sup over different sets} it suffices to show that for any $\zeta\in L^\infty(0,T;\RR^\sR)$ the integrand can be approximated by a sequence in $C_b^2(0,T;\RR^\sR)$. For an arbitrary $\zeta\in L^\infty(0,T;\RR^\sR)$ consider the convolutions with smoothing kernels $\theta_\delta$ for $\delta>0$ that weakly converges to the Dirac measure at 0 as $\delta \rightarrow 0$.
In the convolutions we extended the function $\zeta$ to zero outside the integral $(0,T)$. Since $\zeta\in L^1(\RR;\RR^\sR)$ this sequence $\zeta \ast \theta_\delta$ converges strongly in $L^1(\RR;\RR^\sR)$ to $\zeta$ as $\delta \rightarrow 0$, see~\cite[App.~C.4]{Evans2002}. By a partial converse of the Dominated Convergence Theorem~\cite[Th.~IV.9]{Brezis1983}, after passing to a subsequence $\zeta_n(t)\super{r}:=(\zeta*\theta_{\delta_n}\super{r})(t)$ converges pointwise $t$-almost everywhere.
Then the exponential $-\bk\big(c(t)\big)\big(\e^{\zeta\super{r}_n(t)}-1\big)$ integrand part of $G(c,w,\zeta_n)$ also converges pointwise for almost every $t$. Moreover, we can bound
\begin{equation*}
  \lVert\bk\rVert_\infty \geq -\bk\big(c(t)\big)\big(\e^{\zeta\super{r}_n(t)}-1\big) \geq -\lVert\bk\rVert_\infty \e^{\lVert\zeta_n\rVert_{L^\infty(0,T;\RR^\sR)}} \geq -\lVert\bk\rVert_\infty \e^{\lVert\zeta\rVert_{L^\infty(0,T;\RR^\sR)}},
\end{equation*}
and hence by dominated convergence
\begin{equation*}
  -\sum_{r\in\sR}\int_0^T\!\bk\big(c(t)\big)\big(\e^{\zeta\super{r}_n(t)}-1\big)\,\dd t \to -\sum_{r\in\sR}\int_0^T\!\bk\big(c(t)\big)\big(\e^{\zeta\super{r}(t)}-1\big)\,\dd t.
\end{equation*}
Clearly the linear part $\sum_{r\in\sR}\int_0^T\!\zeta\super{r}_n(t)\dot w\super{r}(t)\,\dd t$ of $G(c,w,\zeta_n)$ converges to $\sum_{r\in\sR}\int_0^T\!\zeta\super{r}(t)\dot w\super{r}(t)\,\dd t$, and so $G(c,w,\zeta_n)\to G(c,w,\zeta)$. This proves the third equality in \eqref{eq:sup over different sets}.

For the fourth equality, take any $\zeta\in C_b^1(0,T;\RR^\sR)$, and approximate with $\zeta\eta_\delta\in C_c^1(0,T;\RR^\sR)$ where
\begin{equation}
  \eta_\delta(t):=\begin{cases}
    0,    & t\in(0,\delta\rbrack\cup\lbrack T-\delta,T),\\
    1,    & t\in\lbrack2\delta,T-2\delta\rbrack,\\
    \text{smooth between } 0 \text{ and } 1, & t\in\lbrack\delta,2\delta\rbrack\cup\lbrack T-2\delta,T-\delta\rbrack.
  \end{cases}
\label{eq:horizontal cutoff}
\end{equation}
Then, as $\delta\to0$,
\begin{equation*}
  G(c,w,\zeta\eta_\delta) = \int_0^T\!\dot w(t)\zeta(t)\eta_\delta(t)\,\dd t - \sum_{r\in\sR}\int_0^T\!\bk\super{r}\big(c(t)\big)(\e^{\zeta_\delta(t)}-1)\,\dd t \to G(c,w,\zeta\eta_\delta)\to G(c,w,\zeta),
\end{equation*}
where for the linear part we use that $\zeta\eta_\delta\to\zeta$ weakly-* in $L^\infty(0,T;\RR^\sR)$, and for the nonlinear part we use dominated convergence.
\end{proof}

\subsection{Approximation by regular curves}
\label{subsec:approximations}

A common challenge in proving a large-deviations lower bound for a Markov process is to approximate any curve of finite rate by curves for which one can perform a change-of-measure. In the setting of our paper, this set of sufficiently regular curves will be defined as:
\begin{multline}
  \sA := \Big\{(c,w) \in \BV\big(0,T;\RR^\sY_+\times\RR^\sR_+\big)\cap \AC\big(0,T;\RR^\sY\times\RR^\sR\big) \colon \\
     \zeta:=\log\mfrac{\dot w}{\bk(c)}\in C_c^1\big(0,T;\RR^\sR\big), \quad
    \dot c=\Gamma\dot w, \quad c,w,\dot w\geq0, \quad w(0)=0  \Big\}.
\label{eq:suff regular curves for Girsanov}
\end{multline}
Observe that this set requires compactly supported perturbations, whereas the change-of-measure Theorem~\ref{th:change of measure general} only requires boundedness. However, the compact support will be needed to control the end point in the tilting arguments, Lemmas~\ref{lem:lower bound for suff reg curves} and \ref{lem:upper bound dynamic}.

This section is dedicated to the proof of the required approximation result using a sequence of four approximation lemmas.
We repeatedly exploit the lower semi-continuity of $\J$ to show that if $\lim_{\delta \searrow 0}(c_\delta,w_\delta) = (c,w)$ in the hybrid topology, then $\liminf_{\delta \searrow 0} \J(c_\delta,w_\delta) \geq \J(c,w)$.

\begin{lemma}[Approximation I] Let $\mu\super{V}$ satisfy Assumption~\ref{ass:cont initial ldp} and $\bk$ satisfy Assumptions~\ref{ass:rrate conditions}\eqref{it:rrate continuous},\eqref{it:rrate upper bound},\eqref{it:rrate monotonicity} and \eqref{it:rrate super-homogeneity}. Given $(c,w)\in \BV\big(0,T;\RR^\sY_+\times\RR^\sR_+\big)$ such that $\J(c,w)<\infty$, there exists a sequence $(c_\delta,w_\delta)_\delta\subset\BV\big(0,T;\RR^\sY_+\times\RR^\sR_+\big)$ such that:
\begin{enumerate}[(i)]
\item $c_\delta(0)\to c(0)$ and $(c_\delta,w_\delta) \hybridto (c,w)$ as $\delta\to0$, \label{it:approx I conv} 
\item $\I_0(c_\delta(0))+\J(c_\delta,w_\delta)\to\I_0(c(0))+\J(c,w)$ as $\delta\to0$, \label{it:approx I rate conv}
\item $\inf_{t\in(0,T),r\in\sR} \bk\super{r}(c_\delta(t))>0$ for any $\delta>0$, \label{it:approx I bound}
\end{enumerate}
\label{lem:approx I rrate bounded below}
\end{lemma}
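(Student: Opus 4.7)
The approximation I would use is a rigid shift of the concentration in a fixed direction: pick a reference $c^\ast \in \RR^\sY_+$ with $\min_{r\in\sR} \bk\super{r}(c^\ast) > 0$, and set
\[
(c_\delta, w_\delta) := (c + \delta c^\ast, \, w).
\]
Such a $c^\ast$ exists: we may discard any $r$ with $\bk\super{r} \equiv 0$, since $\J(c,w) < \infty$ forces $\dot w\super{r} \equiv 0$ by absolute continuity for such $r$; for each remaining reaction choose $c_r\in\RR^\sY_+$ with $\bk\super{r}(c_r) > 0$ and set $c^\ast := \sum_r c_r$, so that $\bk\super{r}(c^\ast) \geq \bk\super{r}(c_r) > 0$ by monotonicity (Assumption~\ref{ass:rrate conditions}\eqref{it:rrate monotonicity}). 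Because only the concentration coordinate is shifted, $(c_\delta, w_\delta)$ lies in $\BV(0,T;\RR^\sY_+\times\RR^\sR_+)$, the continuity equation $\dot c_\delta = \Gamma \dot w_\delta$ is preserved, and $w_\delta(0) = 0$.

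Properties (i) and (iii) then follow almost by inspection. For (i), the path difference $(c_\delta, w_\delta) - (c, w) \equiv (\delta c^\ast, 0)$ is a constant vector, so its $L^1$-norm scales like $\delta$ and the derivative measures coincide identically, which yields hybrid convergence; in particular $c_\delta(0) \to c(0)$. For (iii), monotonicity and superhomogeneity (Assumption~\ref{ass:rrate conditions}\eqref{it:rrate monotonicity},\eqref{it:rrate super-homogeneity}) give, uniformly in $t$ and $r$,
\[
\bk\super{r}(c_\delta(t)) \;\geq\; \bk\super{r}(\delta c^\ast) \;\geq\; \psi(\delta) \, \bk\super{r}(c^\ast) \;>\; 0.
\]

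The main work is condition (ii). The initial piece $\I_0(c_\delta(0)) \to \I_0(c(0))$ is immediate from the continuity of $\I_0$ (Assumption~\ref{ass:cont initial ldp}). For $\J(c_\delta, w_\delta) \to \J(c, w)$ I would apply dominated convergence to the integrand $s(\dot w\super{r}(t) \mid \bk\super{r}(c(t) + \delta c^\ast))$. Pointwise convergence to $s(\dot w\super{r}(t) \mid \bk\super{r}(c(t)))$ follows from continuity of $\bk$ (Assumption~\ref{ass:rrate conditions}\eqref{it:rrate continuous}); the degenerate case $\bk\super{r}(c(t)) = 0$ is handled because then $\dot w\super{r}(t) = 0$ by absolute continuity and $s(0 \mid \bk\super{r}(c + \delta c^\ast)) = \bk\super{r}(c + \delta c^\ast) \to 0$. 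For small $\delta$ one has $c_\delta(t) \in \sS_\epsilon(c(0))$, so Assumption~\ref{ass:rrate conditions}\eqref{it:rrate upper bound} provides a uniform upper bound $\bk\super{r}(c_\delta(t)) \leq M_r < \infty$. Since $\hat j \mapsto s(j \mid \hat j)$ is convex with minimum at $\hat j = j$, its supremum over the interval $[\bk\super{r}(c(t)), M_r]$ is attained at an endpoint, giving the $\delta$-independent majorant
\[
s\bigl(\dot w\super{r}(t) \,\big|\, \bk\super{r}(c_\delta(t))\bigr) \;\leq\; s\bigl(\dot w\super{r}(t) \,\big|\, \bk\super{r}(c(t))\bigr) + s\bigl(\dot w\super{r}(t) \,\big|\, M_r\bigr).
\]

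The main obstacle is then time-integrability of the second summand, which I would settle by a case split on $\dot w\super{r}(t)$. On $\{\dot w\super{r}(t) \leq M_r\}$ one has $-M_r/e \leq s(\dot w\super{r}(t) \mid M_r) \leq M_r$ directly from the bound $x\log x \geq -1/e$ on $[0,1]$, giving a bounded contribution to the integral. On $\{\dot w\super{r}(t) > M_r\}$ the map $\hat j \mapsto s(\dot w\super{r}(t) \mid \hat j)$ is decreasing for $\hat j < \dot w\super{r}(t)$, so $s(\dot w\super{r}(t) \mid M_r) \leq s(\dot w\super{r}(t) \mid \bk\super{r}(c(t)))$, which integrates to a finite number since $\J(c,w) < \infty$. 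Both pieces of the majorant are thus in $L^1(0,T)$; dominated convergence closes the argument and gives $\J(c_\delta,w_\delta) \to \J(c,w)$.
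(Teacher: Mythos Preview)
Your proof is correct, but it takes a different route from the paper's. The paper uses a \emph{convex combination} rather than a rigid shift: it sets $w_\delta=(1-\delta)w$ and $c_\delta=\delta\hat c+(1-\delta)c$, so that both coordinates are scaled. The convergence of $\J$ is then obtained not by dominated convergence but by an explicit splitting
\[
  s\bigl(\dot w_\delta\super{r}\mid\bk\super{r}(c_\delta)\bigr)
  = s\bigl(\dot w_\delta\super{r}\mid\bk\super{r}(c)\bigr)
  + \dot w_\delta\super{r}\log\frac{\bk\super{r}(c)}{\bk\super{r}(c_\delta)}
  + \bk\super{r}(c_\delta)-\bk\super{r}(c),
\]
with the log-ratio controlled by a \emph{second} application of superhomogeneity, namely $\bk\super{r}(c_\delta)\geq\psi(1-\delta)\bk\super{r}(c)$, and the last term by the Lipschitz bound on $\bk$. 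This yields a quantitative estimate $\J(c_\delta,w_\delta)\leq(1-\delta)\J(c,w)+o(1)$ directly. Your approach keeps $w$ fixed and instead exploits the convexity of $\hat j\mapsto s(j\mid\hat j)$ together with monotonicity of $\bk$ to sandwich $\bk\super{r}(c_\delta)$ in $[\bk\super{r}(c),M_r]$, then handles the endpoint $s(\dot w\super{r}\mid M_r)$ by the case split on $\{\dot w\super{r}\lessgtr M_r\}$. This is arguably cleaner (no modification of $w$, superhomogeneity used only for~(iii)), at the cost of an indirect limit rather than an explicit inequality.
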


\begin{proof} Without loss of generality we may assume that for each reaction $r$ there exists a concentration $\hat c\super{r}\in\RR_y^+$ for which $\bk\super{r}(\hat c\super{r})>0$. Set $\hat c =\sum_{r\in\sR} \hat c\super{r}$, so that by the assumed monotonicity,
\begin{equation*}
  \min_{r\in\sR} \bk\super{r}(\hat c) >0.
\end{equation*}
For $\delta>0$ define
\begin{align*}
  c_\delta(t):=\delta \hat c + (1-\delta)c(0) +  \Gamma w_\delta(t),
    &&\text{and}&&
  w_\delta(t):=(1-\delta) w,
\end{align*}
so that $c_\delta(t)=\delta \hat c + (1-\delta)c(t)\geq0$.

The limits~\eqref{it:approx I conv} are trivial. The lower bound~\eqref{it:approx I bound} follows by the monotonicity and superhomogeneity Assumptions~\ref{ass:rrate conditions}\eqref{it:rrate monotonicity} and \eqref{it:rrate super-homogeneity}:
\begin{equation}
  \inf_{t\in(0,T),r\in\sR} \bk\super{r}\big(c_\delta(t)\big) \geq \min_{r\in\sR} \bk\super{r} (\delta\hat c) \geq \psi(\delta) \min_{r\in\sR} \bk\super{r}(\hat c) >0.
\label{eq:approx I rrate explicit lower bound}
\end{equation}

For the limits~\eqref{it:approx I rate conv}, the convergence of $\I_0(c_\delta(0))$ follows by Assumption~\ref{ass:cont initial ldp}.
Since $\liminf_{\delta \searrow 0} \J(c_\delta,w_\delta) \geq \J(c,w)$ it remains to check $\limsup_{\delta \searrow 0} \J(c_\delta,w_\delta) \leq \J(c,w)$. Using the fact that $\bk\super{r}(c_\delta(t)) \geq \psi(1-\delta)\bk\super{r}(c(t))$ by the same argument as~\eqref{eq:approx I rrate explicit lower bound} above, we can rewrite and estimate:
\begin{align}
  s\!\left(\dot{w}\super{r}_\delta(t) \mid  \bk\super{r}\left(c_\delta(t)\right) \right)
    &= s\!\left(\dot{w}\super{r}_\delta(t) \mid  \bk\super{r}\left(c(t)\right) \right)
  + \dot{w}\super{r}_\delta(t) \log\left(\frac{\bk\super{r}\left(c(t)\right)}{\bk\super{r}\left(c_\delta(t)\right)} \right)
  -\bk\super{r}\left(c(t)\right)+\bk\super{r}\left(c_\delta(t)\right)
  \label{eq:approx I splitting kdelta}\\
  &\leq
(1-\delta) s\!\left(\dot{w}\super{r}(t) \mid  \bk\super{r}\left(c(t)\right) \right)
  +\delta   \bk\super{r}\left(c(t)\right) \notag\\
  &\qquad +  (1-\delta) \dot{w}\super{r}(t) \log \mfrac{1-\delta}{\psi(1-\delta)}
  +\abs{\bk\super{r}\left(c(t)\right)-\bk\super{r}\left(c_\delta(t)\right)}.
  \label{eq:approx I splitting estimate}
\end{align}
Summing over $r$ and integrating over $t$ shows that, for $\delta$ sufficiently small,
\begin{equation*}
\J(c_\delta,w_\delta)
\leq
(1-\delta) \J(c,w) + \delta T \sup_{\tilde{c}\in\sS_{2\delta}(c(0))}\lvert \bk(\tilde c)\rvert
  +  (1-\delta) \log\mfrac{1-\delta}{\psi(1-\delta)} \norm{\dot w}_{L^1}
  + \sqrt{\lvert\sR\rvert}\Lip(\bk) \norm{c_\delta - c}_{L^1}.
\end{equation*}
Using Assumption~\ref{ass:rrate conditions}\eqref{it:rrate upper bound} it follows that all but the first term on the right-hand side vanish as $\delta \searrow 0$ and the result is established.
\end{proof}

For smoothing purposes we make use of convolutions with the heat kernels $\theta_\epsilon \colon \RR \rightarrow \RR_+; t \mapsto \exp(-t^2/2\epsilon) / \sqrt{2\pi \epsilon}$.

\begin{lemma}[Approximation II] Let $\mu\super{V}$ satisfy Assumption~\ref{ass:cont initial ldp} and $\bk$ satisfy Assumptions~\ref{ass:rrate conditions}\eqref{it:rrate continuous} and~\eqref{it:rrate upper bound}. Given $(c,w)\in \BV\big(0,T;\RR^\sY_+\times\RR^\sR_+\big)$ such that $\J(c,w)<\infty$ and $\inf_{t\in(0,T),r\in\sR} \bk\super{r}(c(t))>0$, there exists a sequence $(c_\delta,w_\delta)_\delta\subset C^\infty_b(0,T;\RR^\sY_+\times\RR^\sR_+)$ such that:
\begin{enumerate}[(i)]
\item $c_\delta(0)\to c(0)$ and $(c_\delta,w_\delta) \hybridto (c,w)$ as $\delta\to0$, \label{it:approx II conv} 
\item $\I_0(c_\delta(0))+\J(c_\delta,w_\delta)\to\I_0(c(0))+\J(c,w)$ as $\delta\to0$, \label{it:approx II rate conv}
\item $\inf_{t\in(0,T),r\in\sR} \bk\super{r}(c_\delta(t))>0$ for any sufficiently small $\delta>0$. \label{it:approx II bound}
\end{enumerate}
\label{lem:approx II smooth}
\end{lemma}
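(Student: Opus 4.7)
The plan is to mollify $(c,w)$ with the heat kernel $\theta_\delta$ after extending it to all of $\RR$. First, since $\J(c,w)<\infty$ forces $(c,w)\in W^{1,1}(0,T;\RR^\sY\times\RR^\sR_+)$ by Lemma~\ref{lem:regularity}, the path $(c,w)$ admits a continuous representative on $[0,T]$; I would extend it to $\RR$ by the constant values $(c(0),w(0))$ for $t<0$ and $(c(T),w(T))$ for $t>T$, so that the extension is continuous on $\RR$, the distributional derivatives $\dot c,\dot w$ vanish outside $(0,T)$, and the continuity equation $\dot c=\Gamma\dot w$ continues to hold on all of $\RR$. I then set $c_\delta:=\theta_\delta\ast c$ and $w_\delta:=\theta_\delta\ast w$ restricted to $[0,T]$.

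By the standard properties of convolution with a smooth, non-negative, unit-mass kernel, $(c_\delta,w_\delta)\in C^\infty_b(0,T;\RR^\sY_+\times\RR^\sR_+)$, with $\dot w_\delta=\theta_\delta\ast \dot w\geq 0$, $\dot c_\delta=\Gamma\dot w_\delta$, and $(c_\delta,w_\delta)\to(c,w)$ uniformly on $[0,T]$. This uniform convergence immediately yields (\ref{it:approx II conv}) and $c_\delta(0)\to c(0)$; combined with continuity of $\bk$ (Assumption~\ref{ass:rrate conditions}\eqref{it:rrate continuous}) and the hypothesis $\inf_{t,r}\bk\super{r}(c(t))>0$, it also yields (\ref{it:approx II bound}). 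Continuity of $\I_0$ supplies $\I_0(c_\delta(0))\to \I_0(c(0))$.

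The substantive step is the convergence $\J(c_\delta,w_\delta)\to \J(c,w)$; lower semicontinuity supplies $\liminf$, and I would obtain the matching $\limsup$ by exploiting the pointwise identity
\begin{equation*}
s\bigl(\dot w\super{r}_\delta(t)\bigm| \bk\super{r}(c_\delta(t))\bigr) = s\bigl(\dot w\super{r}_\delta(t)\bigm| (\theta_\delta\ast \bk\super{r}(c))(t)\bigr) + \dot w\super{r}_\delta(t)\log\mfrac{(\theta_\delta\ast \bk\super{r}(c))(t)}{\bk\super{r}(c_\delta(t))} + \bk\super{r}(c_\delta(t)) - (\theta_\delta\ast \bk\super{r}(c))(t),
\end{equation*}
together with Jensen's inequality for the jointly convex $s$ applied to the probability measure $\theta_\delta(t-\cdot)\dd\cdot$, which bounds the first term on the right by $(\theta_\delta\ast s(\dot w\super{r}\mid \bk\super{r}(c)))(t)$. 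Integrating over $[0,T]$ and using $L^1_{\mathrm{loc}}$-convergence of mollifications (the extended integrand is in $L^1(0,T)$ because $\J(c,w)<\infty$ and bounded outside $(0,T)$ by the constant $\bk\super{r}(c(0))\vee \bk\super{r}(c(T))$) controls the leading contribution by $\int_0^T s(\dot w\super{r}\mid \bk\super{r}(c))\dd t+o(1)$. For the error terms, uniform convergence $\bk\super{r}(c_\delta)\to \bk\super{r}(c)$ (by continuity of $\bk$ and uniform convergence of $c_\delta$) and $\theta_\delta\ast \bk\super{r}(c)\to \bk\super{r}(c)$ (by uniform continuity) together with the strict positivity of $\bk\super{r}(c(\cdot))$ force the log-ratio to tend to zero uniformly and the difference $\bk\super{r}(c_\delta)-\theta_\delta\ast \bk\super{r}(c)$ to be uniformly small, while $\dot w_\delta$ is bounded in $L^1$; both error integrals therefore vanish.

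The principal obstacle is the non-commutation $\bk\super{r}(\theta_\delta\ast c)\neq \theta_\delta\ast \bk\super{r}(c)$: Jensen's inequality only directly controls $s$ against the latter, so the identity above is needed to pass to the former. This is precisely the reason the lower bound $\inf_{t,r}\bk\super{r}(c(t))>0$ established in Lemma~\ref{lem:approx I rrate bounded below} must be available at this stage, as it is the only ingredient preventing the logarithmic error term from blowing up even though both arguments of the ratio converge to the same limit.
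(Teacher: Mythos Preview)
Your construction is the same mollification as the paper's (the paper also sets $c_\delta=\theta_\delta\ast c$; its $w_\delta$ differs from yours only by the constant shift $-(\theta_\delta\ast w)(0)$, which is irrelevant for $\J$), and the structure of the argument---uniform convergence for (\ref{it:approx II conv}) and (\ref{it:approx II bound}), lower semicontinuity for the $\liminf$, and a log-ratio splitting identity plus Jensen for the $\limsup$---is the same. The one genuine technical difference is in how Jensen is applied: you exploit the \emph{joint} convexity of $s(\cdot\mid\cdot)$ against the mollifier, which bounds $s(\dot w_\delta\mid \theta_\delta\ast\bk(c))$ directly by $\theta_\delta\ast s(\dot w\mid\bk(c))$ and makes the remaining error terms vanish uniformly; the paper instead inserts $\bk\super{r}(c(t))$ (not mollified) in the splitting, applies Jensen only in the first argument, passes to a subsequence where $\dot w_\delta\to\dot w$ a.e., and invokes a generalised dominated convergence theorem. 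Your variant is slightly cleaner in that it avoids the subsequence extraction and the two-sided sandwich on $\int s(\dot w_\delta\mid\bk(c))\,\dd t$, at the modest cost of also having to check uniform convergence of $\theta_\delta\ast\bk\super{r}(c)$ to $\bk\super{r}(c)$.
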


\begin{proof}
Define\begin{align*}
  c_\delta(t):=c(0) + (w*\theta_\delta)(0) + \Gamma w_\delta(t)
    &&\text{and}&&
  w_\delta(t):=(w*\theta_\delta)(t) - (w*\theta_\delta)(0),
\end{align*}
where in the convolutions we extend $w$ constantly to $w(0)$ and $w(T)$ outside the interval $(0,T)$. Observe that the definition is sound in the sense that $c_\delta(t)=(c*\theta_\delta)(t)\geq0$ and $w_\delta,\dot w_\delta\geq0$.
Since $(c,w)\in W^{1,1}(0,T;\RR^\sY\times\RR^\sR)$ by Lemma~\ref{lem:regularity}, the desired convergence~\eqref{it:approx II conv} of the sequence can be shown by adapting the results in \cite[App.~C.4]{Evans2002} to mollifiers with non-compact support. Similarly to the proof of Proposition~\ref{prop:J=tilde J}, we pass to a (relabelled) subsequence such that in fact $\dot w_\delta(t)\to\dot w(t)$ pointwise in almost every $t\in (0,T)$.

To show the lower bound~\eqref{it:approx II bound}, observe that since $c$ is continuous (on the compact interval $\lbrack0,T\rbrack$) by Lemma~\ref{lem:regularity}, $c_\delta\to c$ uniformly, see \cite[App.~C.4]{Evans2002}. Moreover, by the continuity of $\bk$, we know there exists a $\tau>0$ such that for $\delta<\tilde\delta$ and any $\tilde c \in \RR^\sY_+$ with $\lvert \tilde c - c\rvert<\tau$ there holds $\lvert \bk(\tilde c)-\bk(c)\rvert<\tfrac12\inf_{t\in(0,T),r\in\sR}\bk\super{r}(c(t))$. From the uniform convergence of $c_\delta$ we get the existence of a $\tilde\delta>0$ such that for any $\delta<\tilde\delta$ and any $t\in\lbrack0,T\rbrack$, there holds $\lvert c(t)-c_\delta(t)\rvert<\tau$. Therefore, for any $\delta<\tilde\delta$  and so $\lvert\bk(c_\delta(t))-\bk(c(t))\rvert<\tfrac12\inf_{t\in(0,T),r\in\sR}\bk\super{r}(c(t))$, from which we deduce the lower bound~\eqref{it:approx II bound}:
\begin{equation}\label{eq:smoothed rate lbound}
  \bk(c_\delta(t))\geq \bk(c(t))-\tfrac12\inf_{t\in(0,T),r\in\sR}\bk\super{r}(c(\tilde t)) \geq \tfrac12\inf_{\tilde t\in(0,T),r\in\sR}\bk\super{r}(c(\tilde t)).
\end{equation}

The convergence $\I_0\big(c_\delta(0)\big)=\I_0\big(c(0)+(w*\theta_\delta)(0)\big)\to\I_0\big(c(0)\big)$ follows by Assumption~\ref{ass:cont initial ldp}. For the convergence of $\J(c_\delta,w_\delta)$, we can bound the integrand, similarly as in \eqref{eq:approx I splitting kdelta},
\begin{equation}
  0  \leq s\!\left(\dot{w}\super{r}_\delta(t) \mid  \bk\super{r}\left(c_\delta(t)\right) \right) 
     \leq s\!\left(\dot{w}\super{r}_\delta(t) \mid  \bk\super{r}\left(c(t)\right) \right)
  + a\dot{w}\super{r}_\delta(t) 
  + \left\lvert \bk\super{r}\left(c(t)\right)-\bk\super{r}\left(c_\delta(t)\right) \right\rvert,
\label{eq:s upper bound dom conv 2}
\end{equation}
where
\begin{equation*}
  \log\Big(\mfrac{\bk\super{r}\left(c(t)\right)}{\bk\super{r}\left(c_\delta(t)\right)}\Big) \stackrel{\eqref{eq:smoothed rate lbound}}{\leq} \log\Big(\mfrac{2\sup_{t\in (0,T)}\bk(c(t))}{\inf_{t\in(0,T)}\bk(c(t))}\Big) =: a \in\lbrack0,\infty).
\end{equation*}
By the assumed continuity of the reaction rates $s\big(\dot w_\delta\super{r}(t) \vert k\super{r}(c_\delta(t))\big)\to s\big(\dot w\super{r}(t) \vert k\super{r}(c(t))\big)$ pointwise in $t\in(0,T)$. If we can prove that, after summing over $\sR$ and integrating over $(0,T)$, the right-hand side in \eqref{eq:s upper bound dom conv 2} converges to a finite integral, then $\J(c_\delta,w_\delta)\to\J(c,w)$ by a generalisation of the Dominated Convergence Theorem, see~\cite[Th.~1.8 \& following remark]{Liebloss2001}.

Naturally the last two terms converge:
\begin{equation*}
  \sum_{r\in\sR}\int_0^T\!\Big\lbrack a\dot{w}\super{r}_\delta(t) 
  + \left\lvert \bk\super{r}\left(c(t)\right)-\bk\super{r}\left(c_\delta(t)\right) \right\rvert \big\rbrack \,\dd t \to 
  a \lVert\dot{w}\rVert_{L^1}<\infty.
\end{equation*}
The convergence of the entropic part can be proven analogue to~\cite[Lem.~4.11]{Renger2017}. By lower semicontinuity,
\begin{equation*}
  \liminf_{\delta\to0} \sum_{r\in\sR}\int_0^T\!s\big(\dot w\super{r}_\delta(t) \vert \bk\super{r}(c(t))\big)\,\dd t
    \geq
  \sum_{r\in\sR}\int_0^T\! s\big(\dot w\super{r}(t) \vert \bk\super{r}(c(t))\big)\,\dd t.
\end{equation*}
On the other hand, by Jensen's inequality,
\begin{align*}
  &\sum_{r\in\sR}\int_0^T\! s\big(\dot w\super{r}_\delta(t) \vert \bk\super{r}(c(t))\big)\,\dd t 
  \leq \sum_{r\in\sR}\int_0^T\!\Big(s\big(\dot w\super{r}(\cdot)  \vert \bk\super{r}(c(t))\big)*\theta_\epsilon\Big)(t) \,\dd t\\
  &\qquad = \sum_{r\in\sR}\int_0^T\! \underbrace{(\dot w\super{r}\log\dot w\super{r}}_{\in L^1})*\theta_\epsilon(t) 
    - (\dot w\super{r}*\theta_\epsilon)(t) \underbrace{\big( 1+\log \bk\super{r}(c(t)) \big)}_{\in L^\infty}
    + \bk\super{r}\big(c(t)\big) \\
  &\qquad \to\sum_{r\in\sR}\int_0^T\! s\big(\dot w\super{r}(t) \vert \bk\super{r}(c(t))\big)\,\dd t,
\end{align*}
again by \cite[App.~C.4]{Evans2002}. Therefore the summed and integrated right-hand side of \eqref{eq:s upper bound dom conv 2} indeed converges to a finite integral, which concludes the proof of claim~\eqref{it:approx II rate conv}.

\end{proof}

\begin{lemma}[Approximation III] Let $\mu\super{V}$ satisfy Assumption~\ref{ass:cont initial ldp} and $\bk$ satisfy Assumptions~\ref{ass:rrate conditions}\eqref{it:rrate continuous},\eqref{it:rrate upper bound}, \eqref{it:rrate monotonicity} and \eqref{it:rrate super-homogeneity}.
Given $(c,w)\in C_b^\infty\big(0,T;\RR^\sY_+\times\RR^\sR_+\big)$ such that $\J(c,w)<\infty$ and $\inf_{t\in(0,T),r\in\sR} \bk\super{r}(c(t))>0$, there exists a sequence $(c_\delta,w_\delta)_\delta\subset C_b^\infty\big(0,T;\RR^\sY_+\times\RR^\sR_+\big)$ such that:
\begin{enumerate}[(i)]
\item $c_\delta(0)\to c(0)$ and $(c_\delta,w_\delta) \hybridto (c,w)$ as $\delta\to0$, \label{it:approx III conv} 
\item $\I_0(c_\delta(0))+\J(c_\delta,w_\delta)\to\I_0(c(0))+\J(c,w)$ as $\delta\to0$, \label{it:approx III rate conv}
\item $\inf_{t\in(0,T),r\in\sR} \bk\super{r}(c_\delta(t))>0$ for any $\delta>0$, \label{it:approx III bound k}
\item $\inf_{t\in(0,T),r\in\sR} \dot w_\delta\super{r}(t)>0$ for any $\delta>0$, \label{it:approx III bound w}
\item $\zeta_\delta:=\log \mfrac{\dot w_\delta}{\bk(c_\delta)} \in C_b^1(0,T;\RR^\sR)$. \label{it:approx III zeta Cb2}
\end{enumerate}
\label{lem:approx III flux bounded below}
\end{lemma}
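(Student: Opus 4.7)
The plan is to augment the Lemma~\ref{lem:approx I rrate bounded below} construction with an additional linear-in-time perturbation of the flux that forces every component of $\dot w$ to remain uniformly bounded below. Writing $\mathbf 1\in\RR^\sR$ for the vector of ones, $(\Gamma\mathbf 1)_-\in\RR^\sY_+$ for the component-wise negative part of $\Gamma\mathbf 1$, and letting $\hat c\in\RR^\sY_+$ be the reference concentration of Lemma~\ref{lem:approx I rrate bounded below} so that $\min_r\bk\super{r}(\hat c)>0$, I would set
\begin{align*}
  w_\delta(t) &:= w(t) + \delta t\,\mathbf 1, \\
  c_\delta(t) &:= c(t) + \delta\bigl(\hat c + T(\Gamma\mathbf 1)_-\bigr) + \delta t\,\Gamma\mathbf 1.
\end{align*}
By construction $(c_\delta,w_\delta)\in C_b^\infty$, the continuity equation $\dot c_\delta=\Gamma\dot w_\delta$ is preserved, and the elementary bound $T(\Gamma\mathbf 1)_-+t\,\Gamma\mathbf 1\geq 0$ on $[0,T]$ yields $c_\delta(t)\geq c(t)+\delta\hat c\geq \delta\hat c$.

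Items~\eqref{it:approx III conv}, \eqref{it:approx III bound k}, \eqref{it:approx III bound w} and \eqref{it:approx III zeta Cb2} then follow quickly: \eqref{it:approx III conv} from the explicit formulas, \eqref{it:approx III bound w} from $\dot w_\delta\geq\delta\mathbf 1$, \eqref{it:approx III bound k} from Assumption~\ref{ass:rrate conditions}\eqref{it:rrate monotonicity} and \eqref{it:rrate super-homogeneity} via $\bk\super{r}(c_\delta(t))\geq \bk\super{r}(\delta\hat c)\geq \psi(\delta)\bk\super{r}(\hat c)>0$, and \eqref{it:approx III zeta Cb2} because $\dot w_\delta\in C_b^\infty$ is bounded away from zero while $\bk\circ c_\delta\in C_b^1(0,T;\RR^\sR)$ is bounded both away from zero and above (using Assumption~\ref{ass:rrate conditions}\eqref{it:rrate upper bound} and the boundedness of $c_\delta$ on $[0,T]$).

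The main task is property~\eqref{it:approx III rate conv}. Convergence of $\I_0(c_\delta(0))\to \I_0(c(0))$ follows from the continuity in Assumption~\ref{ass:cont initial ldp}, and hybrid lower semicontinuity of $\J$ yields $\liminf_{\delta\to 0}\J(c_\delta,w_\delta)\geq \J(c,w)$. For the matching $\limsup$ I would reuse the splitting~\eqref{eq:approx I splitting kdelta}; since $c_\delta\to c$ and $\dot w_\delta\to\dot w$ uniformly on $[0,T]$ and the ratio $\bk(c)/\bk(c_\delta)$ is pinched between two positive constants (upper bound from the uniform positivity of $\bk(c)$, lower bound from~\eqref{it:approx III bound k}), the logarithmic and remainder terms in that splitting integrate to zero in the limit, while the leading term $\int_0^T s(\dot w_\delta\super{r}(t)\mid \bk\super{r}(c(t)))\,\dd t$ converges to $\int_0^T s(\dot w\super{r}(t)\mid \bk\super{r}(c(t)))\,\dd t$ by a dominated-convergence argument controlled by Assumption~\ref{ass:rrate conditions}\eqref{it:rrate upper bound} and the uniform boundedness of $\dot w_\delta$.

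I expect the principal obstacle to be the construction itself: finding a perturbation that simultaneously enforces all four positivity constraints (on $c_\delta$, $w_\delta$, $\dot w_\delta$ and $\bk(c_\delta)$) while respecting the continuity equation. The offset $\delta T(\Gamma\mathbf 1)_-$ in $c_\delta(0)$ is chosen precisely to absorb the potentially-negative contribution $\delta t\,\Gamma\mathbf 1$ that the flux perturbation injects into $c_\delta$; once this compensation is in place, the remaining estimates are largely a replay of those in Lemma~\ref{lem:approx I rrate bounded below}.
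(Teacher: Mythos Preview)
Your construction is correct and follows essentially the same approach as the paper: perturb $w$ by a linear-in-$t$ term to force $\dot w_\delta\geq\delta>0$, and offset $c_\delta(0)$ so that the induced drift $\Gamma\dot w_\delta$ keeps $c_\delta\geq 0$. The paper uses the convex combination $w_\delta=(1-\delta)w+\delta t$ with offset $\delta T\sum_r\alpha\super{r}$ (where $\alpha\super{r}$ is the negative part of $\gamma\super{r}$), whereas you use the additive $w_\delta=w+\delta t\,\mathbf 1$ with offset $\delta(\hat c+T(\Gamma\mathbf 1)_-)$; both achieve the same goal.

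One small remark on your argument for \eqref{it:approx III rate conv}: your phrasing about the ratio $\bk(c)/\bk(c_\delta)$ being ``pinched between two positive constants'' via \eqref{it:approx III bound k} is slightly loose, since the lower bound $\psi(\delta)\bk\super{r}(\hat c)$ from \eqref{it:approx III bound k} degenerates as $\delta\to 0$. The clean way (which your construction actually allows) is to note that $c_\delta(t)\geq c(t)$ componentwise, so monotonicity gives $\bk\super{r}(c_\delta(t))\geq \bk\super{r}(c(t))$ and hence $\log\bigl(\bk\super{r}(c(t))/\bk\super{r}(c_\delta(t))\bigr)\leq 0$; the log term then drops out of the $\limsup$ estimate for free. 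The paper instead handles the leading term via convexity of $s(\cdot\mid k)$, but your dominated-convergence argument (exploiting the hypothesis $\inf_{t,r}\bk\super{r}(c(t))>0$ and the uniform boundedness of $\dot w_\delta$) is equally valid and arguably more direct here.
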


\begin{proof}
Let $\beta\super{r},\alpha\super{r}\in\RR_+^\sY$ be the positive and negative parts of $\gamma\super{r}$, i.e. $\gamma\super{r}=\beta\super{r}-\alpha\super{r}$. For $0<\delta<1$ define
\begin{align*}
  c_\delta(t):=(1-\delta) c(0) + \delta T\sum_{r\in\sR}\alpha\super{r} + \Gamma w_\delta(t)
    &&\text{and}&&
  w_\delta(t):=(1-\delta)w(t) + \delta t,
\end{align*}
so that $c_\delta(t)=(1-\delta) c(t) + \delta\sum_{r\in\sR} \big\lbrack (T-t)\alpha\super{r} + t\beta\super{r}\big\rbrack \geq0$ and $\dot w_\delta(t)=(1-\delta) \dot w(t)+\delta\geq \delta>0$. 
Hence the sequence is admissable, and property \eqref{it:approx III bound w} holds by construction. Again, the hybrid convergence~\eqref{it:approx III conv} is trivial, and the monotonicity and superhomogeneity, Assumptions~\ref{ass:rrate conditions}\eqref{it:rrate monotonicity}, \eqref{it:rrate super-homogeneity} imply the same estimate as \eqref{eq:approx I rrate explicit lower bound}, which shows that the bound~\eqref{it:approx III bound k} is indeed retained.

The convergence $\I_0\big(c_\delta(0)\big)\to\I_0\big(c(0)\big)$ follows from the continuity of $\I_0$ and \eqref{it:approx III conv}.  As in the previous lemmas it is sufficient to show $\limsup_{\delta \searrow 0} \J(c_\delta,w_\delta) \leq \J(c,w)$ in order to establish~\eqref{it:approx I rate conv}. We can again derive estimate~\eqref{eq:approx I splitting estimate}, where the terms $\dot w\super{r}_\delta(t)\log\bk\super{r}(c(t))/\bk\super{r}(c_\delta(t))$ and $\bk\super{r}(c(t))-\bk\super{r}(c_\delta(t))$ can be dealt with in exactly the same manner as in the proof of Lemma~\ref{lem:approx I rrate bounded below}. It thus remains to show convergence of the integral $\sum_{r\in\sR} \int_0^Ts\big(\dot w\super{r}_\delta(t) \vert \bk\super{r}(c(t))\big)\,\dd t$. By the convexity of $s$ in its first argument, we get for $0<\delta<1$,
\begin{align*}
  s\big(\dot w\super{r}_\delta(t) \vert \bk\super{r}(c(t))\big) 
  &\leq
  (1-\delta) s\big(\dot w\super{r}(t) \vert \bk\super{r}(c(t))\big) + \delta s\big(1\vert \bk\super{r}(c(t))\big) \\
  &\leq
s\big(\dot w\super{r}(t) \vert \bk\super{r}(c(t))\big) - \delta\log\bk\super{r}(c(t)) + \delta \bk\super{r}(c(t)).
\end{align*}
Since the last two terms are bounded from below and above it follows that $\limsup_{\delta \searrow 0} \J(c_\delta,w_\delta) \leq \J(c,w)$.

Finally we can prove~\eqref{it:approx III zeta Cb2} for any $\delta>0$. Since the curve $(c_\delta,w_\delta)$ is smooth we only need to prove boundedness of the functions
\begin{align*}
  \zeta\super{r}_\delta(t)=\log\mfrac{\dot w\super{r}_\delta(t)}{\bk(c_\delta(t))},
  &&\text{and}&&
  \dot\zeta\super{r}_\delta(t)= \frac{\ddot w\super{r}_\delta(t)}{\dot w\super{r}_\delta(t)} - \frac{\nabla_c\bk(c_\delta(t))\cdot\dot c_\delta(t)}{\bk(c_\delta(t))}.
\end{align*}
This follows from the boundedness away from zero of $\dot w_\delta$ and $\bk(c_\delta)$, together Assumption~\ref{ass:rrate conditions}\eqref{it:rrate upper bound}.
\end{proof}

\begin{lemma}[Approximation IV] Let $\bk$ satisfy Assumptions~\eqref{it:rrate continuous},\eqref{it:rrate upper bound}. 
Given $(c,w)\in C_b^\infty\big(0,T;\RR^\sY_+\times\RR^\sR_+\big)$ such that $\J(c,w)<\infty$ and $\zeta=\log\dot w/\bk(c)\in C_b^1(0,T;\RR^\sR)$, there exists a sequence $(c_\delta,w_\delta)\in C_c^1\big(0,T;\RR^\sY_+\times\RR^\sR_+\big)$ such that:
\begin{enumerate}[(i)]
\item $c_\delta(0)\equiv c(0)$ and $(c_\delta,w_\delta) \hybridto (c,w)$ as $\delta\to0$, \label{it:approx IV conv} 
\item $\I_0(c_\delta(0))+\J(c_\delta,w_\delta)\to\I_0(c(0))+\J(c,w)$ as $\delta\to0$. \label{it:approx IV rate conv}
\end{enumerate}
\label{lem:approx IV compact support}
\end{lemma}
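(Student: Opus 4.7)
The plan is to cut off the tilt $\zeta$ in time and reconstruct $(c_\delta,w_\delta)$ from it through the perturbed reaction-rate system. Concretely, set $\zeta_\delta:=\zeta\eta_\delta\in C_c^1(0,T;\RR^\sR)$ using the cut-off $\eta_\delta$ from~\eqref{eq:horizontal cutoff}, and let $(c_\delta,w_\delta)\in C^1([0,T];\RR^\sY\times\RR^\sR)$ be the unique solution of
\begin{equation*}
  \dot w_\delta\super{r}(t)=\bk\super{r}\big(c_\delta(t)\big)\,e^{\zeta_\delta\super{r}(t)},\quad \dot c_\delta(t)=\Gamma\dot w_\delta(t),\quad c_\delta(0)=c(0),\ w_\delta(0)=0.
\end{equation*}
Global well-posedness on $[0,T]$ is the Picard--Lindel\"of argument indicated after~\eqref{eq:perturbed equation general}, using Assumptions~\ref{ass:rrate conditions}\eqref{it:rrate continuous}--\eqref{it:rrate upper bound} together with the uniform bound $\|\zeta_\delta\|_\infty\leq \|\zeta\|_\infty$. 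Non-negativity of $\dot w_\delta$ and of $w_\delta$ is then automatic, and $c_\delta\geq 0$ follows from the vanishing of $\bk$ on the relevant portion of $\partial\RR^\sY_+$ implied by Assumptions~\ref{ass:rrate conditions}\eqref{it:rrate cutoff below}--\eqref{it:rrate convergence}; moreover the hypothesis $\inf_{t,r}\bk\super{r}(c(t))>0$ keeps $c$, and hence $c_\delta$ for small $\delta$, bounded away from that portion of the boundary. By construction $\zeta_\delta=\log(\dot w_\delta/\bk(c_\delta))\in C_c^1(0,T;\RR^\sR)$, so the candidate sequence really sits in $\sA$.

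For the hybrid convergence~\eqref{it:approx IV conv} the key observation is that $\zeta_\delta=\zeta$ on $[2\delta,T-2\delta]$ with $|\zeta_\delta|\leq\|\zeta\|_\infty$, so $\zeta_\delta\to\zeta$ pointwise and, by dominated convergence, $e^{\zeta_\delta}\to e^\zeta$ in $L^1(0,T;\RR^\sR)$. I would then compare the ODEs satisfied by $c_\delta$ and $c$,
\begin{equation*}
  \dot c_\delta-\dot c=\Gamma\big[\bk(c_\delta)-\bk(c)\big]e^{\zeta_\delta}+\Gamma\bk(c)\big(e^{\zeta_\delta}-e^\zeta\big),
\end{equation*}
using local Lipschitz continuity of $\bk$ on the bounded region where both trajectories live (by Assumption~\ref{ass:rrate conditions}\eqref{it:rrate upper bound}) together with $\|e^{\zeta_\delta}\|_\infty\leq e^{\|\zeta\|_\infty}$, and run Gronwall's inequality to obtain $c_\delta\to c$ uniformly on $[0,T]$. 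Consequently $\dot w_\delta=\bk(c_\delta)e^{\zeta_\delta}\to\bk(c)e^\zeta=\dot w$ in $L^1$ and $w_\delta\to w$ uniformly, from which the hybrid convergence of both paths and their derivatives is immediate.

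For the convergence of the rate functional~\eqref{it:approx IV rate conv}, the equality $\I_0(c_\delta(0))=\I_0(c(0))$ is automatic. Substituting $\dot w_\delta\super{r}=\bk\super{r}(c_\delta)e^{\zeta_\delta\super{r}}$ into the definition of $s$ gives the explicit identity
\begin{equation*}
  s\!\left(\dot w_\delta\super{r}(t)\,\big|\,\bk\super{r}(c_\delta(t))\right)=\bk\super{r}\big(c_\delta(t)\big)\left(\zeta_\delta\super{r}(t)\,e^{\zeta_\delta\super{r}(t)}-e^{\zeta_\delta\super{r}(t)}+1\right),
\end{equation*}
whose integrand is dominated uniformly in $\delta$ by $\sup\bk\cdot(\|\zeta\|_\infty+1)\,e^{\|\zeta\|_\infty}$; combined with the uniform convergence of $\bk(c_\delta)$ to $\bk(c)$ and the pointwise convergence of $\zeta_\delta$ to $\zeta$, dominated convergence yields $\J(c_\delta,w_\delta)\to\J(c,w)$. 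The main subtlety is that $\zeta_\delta\to\zeta$ is only $L^1$ rather than uniform, because of the endpoint cut-off, so uniform closeness of $\zeta$-tilts cannot be used; fortunately this is precisely the ingredient that Gronwall needs on the ODE side, and the same $L^1$-convergence of $e^{\zeta_\delta}$ powers both the uniform convergence $c_\delta\to c$ and the dominated convergence for $\J$.
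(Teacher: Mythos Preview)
Your proof is correct and follows essentially the same route as the paper: cut off $\zeta$ to $\zeta_\delta=\zeta\eta_\delta$, define $(c_\delta,w_\delta)$ as the solution of the $\zeta_\delta$-perturbed reaction-rate ODE with the same initial data, use a Gronwall estimate driven by $\int_0^t|e^{\zeta_\delta}-e^{\zeta}|$ to obtain uniform convergence, and conclude $\J(c_\delta,w_\delta)\to\J(c,w)$ by dominated convergence. Your explicit formula $s(\dot w_\delta\super{r}\mid\bk\super{r}(c_\delta))=\bk\super{r}(c_\delta)(\zeta_\delta\super{r}e^{\zeta_\delta\super{r}}-e^{\zeta_\delta\super{r}}+1)$ is simply the integrand of $G(c_\delta,w_\delta,\zeta_\delta)$ written out, so the final step is literally the paper's $\J(c_\delta,w_\delta)=G(c_\delta,w_\delta,\zeta_\delta)\to G(c,w,\zeta)=\J(c,w)$.
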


\begin{proof}
Given $(c,w)$ with $\zeta=\log\dot w/\bk\in C_b^1(0,T;\RR^\sR)$, we approximate $\zeta_\delta:=\zeta\eta_\delta$ where $\eta_\delta$ is the usual compactly supported function~\eqref{eq:horizontal cutoff}. Clearly $(c,w)$ satisfies the perturbed equation below, and we define, for each $\delta>0$ the path $(c_\delta,w_\delta)$ as the solution of the second perturbed equation:
\begin{align*}
  \begin{cases}
    \dot c(t)=\Gamma\dot w(t),\\
    \dot w(t)=\bk\super{r}\big(c(t)\big)\e^{\zeta\super{r}(t)},
  \end{cases} 
&&
  \begin{cases}
    \dot c_\delta(t)=\Gamma\dot w_\delta(t),\\
    \dot w_\delta(t)=\bk\super{r}\big(c_\delta(t)\big)\e^{\zeta\super{r}_\delta(t)},
  \end{cases} 
\end{align*}
both under the same initial conditions $(c(0),0)$.

Let us now introduce the matrix norm,
\begin{equation*}
  \norm{\Gamma} := \max_{r\in\sR} \abs{\gamma\super{r}}.
\end{equation*}
To prove convergence \eqref{it:approx IV conv} we first estimate for any $0\leq t\leq T$,
\begin{multline*}
\lvert\dot w_\delta(t)-\dot w(t)\rvert \\
\leq
  \sum_{r\in\sR}\left\lvert \bk\super{r}\big(c_\delta(t)\big)\e^{\zeta\super{r}_\delta(t)}
                                      -\bk\super{r}\big(c(t)\big)\e^{\zeta\super{r}_\delta(t)}\right\rvert
   +
   \sum_{r\in\sR} \left\lvert \bk\super{r}\big(c(t)\big)\e^{\zeta\super{r}_\delta(t)}
                                        -\bk\super{r}\big(c(t)\big)\e^{\zeta\super{r}(t)}\right\rvert\\
\leq
  \Lip(\bk)\lVert\Gamma\rVert\e^{\lVert\zeta\rVert_{L^\infty}}
               \int_0^t\!\lvert\dot w_\delta(\hat t)-\dot w(\hat t)\rvert\,d\hat t
       + \left( \sup_{\hat{c} \in \sS(c(0))}\sum_{r\in\sR}\bk\super{r}(\hat{c})\right)
         \max_{r\in\sR}\abs{\e^{\zeta\super{r}_\delta(t)}-\e^{\zeta\super{r}(t)}}.
\end{multline*}
From \eqref{eq:horizontal cutoff} one sees that $\zeta_\delta(t) = \zeta(t)$ except on two intervals each with length no more than $2\delta$. Gronwall's inequality yields 
\begin{equation}
  \lvert\dot w_\delta(t)-\dot w(t)\rvert 
  \leq
   \left( \sup_{\hat{c} \in \sS(c(0))}\sum_{r\in\sR}\bk\super{r}(\hat{c})\right)
    \underbrace{\int_0^t\!\lvert \e^{\zeta_\delta(\hat t)}-\e^{\zeta(\hat t)}\rvert\,d\hat t}_{\leq 4\delta \exp\left(\norm{\zeta}_{L^\infty}\right)}
  \e^{\Lip(\bk)\lVert\Gamma\rVert\e^{\lVert\zeta\rVert_{L^\infty}} t}
\label{eq:cutoff zeta estimate}
\end{equation}
and so $w_\delta\to w$ in $W^{1,\infty}(0,T;\RR^\sR_+)$, and by boundedness of the operator $\Gamma$ also $c_\delta\to c$ in $W^{1,\infty}(0,T;\RR^\sY_+)$.

For the convergence~\eqref{it:approx IV conv} we only need to prove convergence of the dynamic rate $\J$: the initial conditions are identical. Indeed, by dominated convergence together with \eqref{eq:cutoff zeta estimate} and $\zeta_\delta\leq\zeta$,
\begin{equation*}
  \J(c_\delta,w_\delta) = G(c_\delta,w_\delta,\zeta_\delta)\to G(c,w,\zeta)=\J(c_\delta,w_\delta).
\end{equation*}
\end{proof}

\begin{corollary} Let $\mu\super{V}$ satisfy Assumption~\ref{ass:cont initial ldp} and $\bk$ satisfy Assumptions~\ref{ass:rrate conditions}\eqref{it:rrate continuous}, \eqref{it:rrate upper bound}, \eqref{it:rrate monotonicity} and \eqref{it:rrate super-homogeneity}.
Given $(c,w)\in \BV\big(0,T;\RR^\sY_+\times\RR^\sR_+\big)$ such that $\J(c,w)<\infty$, there exists a sequence $(c_\delta,w_\delta)_\delta\subset\sA$ such that:
\begin{enumerate}[(i)]
\item $c_\delta(0)\to c(0)$ and $(c_\delta,w_\delta) \hybridto (c,w)$ as $\delta\to0$,
\item $\I_0(c_\delta(0))+\J(c_\delta,w_\delta)\to\I_0(c(0))+\J(c,w)$ as $\delta\to0$.
\end{enumerate}
\label{cor:approx}
\end{corollary}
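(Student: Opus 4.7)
The plan is to obtain the sequence in $\sA$ by chaining the four approximation Lemmas~\ref{lem:approx I rrate bounded below}--\ref{lem:approx IV compact support} in order, and then extracting a diagonal subsequence. The outputs of each lemma are precisely arranged to satisfy the hypotheses of the next: Lemma~\ref{lem:approx I rrate bounded below} reduces the general case of $\J(c,w)<\infty$ to the situation $\inf_{t,r}\bk\super{r}(c(t))>0$; Lemma~\ref{lem:approx II smooth} then smooths the curve while preserving this positivity; Lemma~\ref{lem:approx III flux bounded below} produces a curve for which additionally $\inf_{t,r}\dot w\super{r}(t)>0$ and $\zeta=\log(\dot w/\bk(c))\in C_b^1$; finally Lemma~\ref{lem:approx IV compact support} shrinks the support of $\zeta$ to place it in $C_c^1$. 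Once $\zeta\in C_c^1(0,T;\RR^\sR)$ and the positivity of $c,w,\dot w$ together with $\dot c=\Gamma\dot w$ and $w(0)=0$ are guaranteed, the resulting curve lies in $\sA$.

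Concretely, denote by $(c^{(i)}_{\delta_1,\dots,\delta_i},w^{(i)}_{\delta_1,\dots,\delta_i})$ the sequence produced by applying lemma $i\in\{1,2,3,4\}$ to the output of the previous lemma. Each lemma asserts, as $\delta_i\to 0$, that the hybrid convergence holds and $\I_0(c^{(i)}(0))+\J(c^{(i)},w^{(i)})\to\I_0(c^{(i-1)}(0))+\J(c^{(i-1)},w^{(i-1)})$, while the initial point converges to the previous initial point. After four applications we have a four-indexed family for which, when the parameters are sent to zero in the correct order, the iterated limit is $(c,w)$ with matching rate.

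The main technical obstacle is the extraction of a single approximating sequence from this four-indexed family, since the hybrid topology is not first countable in general. To handle this, note that each approximation is uniformly bounded in $\BV$-norm (as $\epvar(c^{(i)},w^{(i)})\leq\lVert(\dot c^{(i)},\dot w^{(i)})\rVert_{\TV}$ stays bounded by convergence of the rate and norm boundedness of the original curve), and on norm-bounded subsets of $\BV$ the hybrid topology is metrizable because it reduces to $L^1$-convergence plus vague convergence of derivatives against a countable dense subset of $C_0(0,T;\RR^\sY\times\RR^\sR)$. Fixing a metric $d$ generating this restricted topology, a standard diagonal argument allows one to choose $\delta_1(n),\delta_2(n),\delta_3(n),\delta_4(n)\to 0$ such that the single sequence $(c_n,w_n):=(c^{(4)}_{\delta_1(n),\delta_2(n),\delta_3(n),\delta_4(n)},w^{(4)}_{\delta_1(n),\delta_2(n),\delta_3(n),\delta_4(n)})$ satisfies simultaneously $d((c_n,w_n),(c,w))\to 0$, $c_n(0)\to c(0)$ and $\I_0(c_n(0))+\J(c_n,w_n)\to\I_0(c(0))+\J(c,w)$. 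Since every $(c_n,w_n)\in\sA$ by the fourth lemma, this is the desired sequence.
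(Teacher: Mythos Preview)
Your proof is correct and follows exactly the approach the paper intends: the corollary is stated immediately after the four approximation lemmas without proof, as it follows by chaining them and diagonalising. Your added care concerning metrisability of the hybrid topology on $\BV$-norm-bounded sets (needed to extract a single diagonal sequence from the four-parameter family) is a useful technical point that the paper leaves implicit; the uniform $\BV$-bound you invoke can be read off directly from the explicit constructions in the proofs of Lemmas~\ref{lem:approx I rrate bounded below}--\ref{lem:approx IV compact support}.
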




\section{Large deviations}
\label{sec:ldp} 

We approach the proof of the main result, Theorem~\ref{th:ldp} with a fairly classical tilting approach with a twist. In Section~\ref{subsec:exponential tightness} we prove exponential tightness, in Section~\ref{subsec:lower bound} we prove the large deviations lower bound under initial distribution~$\mu\super{V}$, exploiting the approximation arguments from Section~\ref{subsec:approximations}. In Section~\ref{subsec:upper bound} we first prove the weak upper bound (i.e. on compact sets) for the conditional path measures, and then for the path measures under initial distribution~$\mu\super{V}$ again. The exponential tightness then guarantees that the lower bound also holds on closed sets, and that the rate functional is lower semicontinuous~\cite[Lem.~1.2.18]{Dembo1998}.

\subsection{Exponential tightness}
\label{subsec:exponential tightness}

By a standard Chernoff argument, the balls
\begin{align}
  \sB_m^\TV:=\Big\{ (c,w)\in\BV(0,T;\RR^\sY_+\times\RR^\sR_+) : \lVert(c,w)\rVert_{L^1} + \lVert (\dot c,\dot w)\rVert_\TV\leq m\Big\}
\end{align}
can be used for the exponential tightness. However, in order to control the initial condition in the large-deviations upper bound we work with the cones (for some initial condition $\tilde c(0)$):
\begin{equation*}
  \sC_{m,\epsilon}:=\Big\{( c, w)\in\sB_m^\TV: \big\lvert \big(c(t),w(t)\big) - \big(\tilde c(0),0)\big) \big\rvert \leq \epsilon + tm\quad t\text{--a.e.} \Big\},
\end{equation*}

\begin{lemma} For any $m,\epsilon>0$ the cone $\sC_{m,\epsilon}$ is hybrid-compact.
\end{lemma}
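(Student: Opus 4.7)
The plan is to exhibit $\sC_{m,\epsilon}$ as a hybrid-closed subset of the hybrid-compact ball $\sB_m^{\TV}$, so that compactness follows from the usual principle that closed subsets of compact sets are compact. The ambient compactness of $\sB_m^\TV$ is essentially built into the definition of the hybrid topology: norm-boundedness in $\BV(0,T;\RR^\sY\times\RR^\sR)$ yields hybrid (sequential) compactness by the cited \cite[Prop.~3.13]{AmbrosioFuscoPallara2000}, with the additional pointwise non-negativity constraint preserved upon passage to an a.e.-convergent subsequence of any $L^1$-convergent sequence.

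For the hybrid-closedness of $\sC_{m,\epsilon}$, I would fix a sequence $(c_n,w_n)\subset \sC_{m,\epsilon}$ converging in the hybrid topology to some $(c,w)\in\BV(0,T;\RR^\sY\times\RR^\sR)$. The $L^1$-component of the hybrid convergence lets one extract a subsequence $(c_{n_k},w_{n_k})$ converging pointwise almost everywhere in $t$. Along this subsequence, both the non-negativity $c,w\geq 0$ and the pointwise growth constraint
\begin{equation*}
  \big\lvert (c(t),w(t)) - (\tilde c(0),0) \big\rvert \leq \epsilon + tm
\end{equation*}
pass directly to the limit for almost every $t$. The $L^1$-norm bound is preserved under $L^1$-convergence, and the total-variation bound $\|(\dot c,\dot w)\|_\TV\leq m$ is preserved by the weak-* lower semicontinuity of the total-variation norm on measures. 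Hence $(c,w)\in\sC_{m,\epsilon}$.

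The only non-routine point worth flagging is that the hybrid topology is non-metric, so one should be a little careful distinguishing sequential closedness/compactness from the topological notions; however, the hybrid topology is perfectly normal by \cite[Sec.~4]{HeidaPattersonRenger2016}, and the compactness statement inherited from \cite[Prop.~3.13]{AmbrosioFuscoPallara2000} is sequential, which is precisely what the subsequent large-deviation arguments require.
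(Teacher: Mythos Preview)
Your overall strategy matches the paper's: exhibit $\sC_{m,\epsilon}$ as a hybrid-closed subset of the norm-bounded (hence relatively hybrid-compact) ball $\sB_m^\TV$. The difference lies in how closedness is verified. You work with sequences and extract an a.e.-convergent subsequence to pass the pointwise growth bound to the limit; the paper works with \emph{nets}. Since the hybrid topology is not metrizable, sequential closedness does not automatically yield topological closedness, so your argument as written establishes only sequential compactness of $\sC_{m,\epsilon}$. You correctly flag this issue, but perfect normality by itself does not bridge the gap between sequential and topological closedness, so the appeal to \cite[Sec.~4]{HeidaPattersonRenger2016} is not quite the right justification.

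The paper's device for handling nets is worth noting: one cannot extract a pointwise-a.e.-convergent sub\emph{net} from $L^1$-convergence in general, so instead the paper rewrites the a.e.\ pointwise constraint in integral form,
\[
  \int_A\!\Big(\big\lvert(c(t),w(t))-(\tilde c(0),0)\big\rvert - \epsilon - mt\Big)\,\dd t \leq 0 \qquad \text{for all measurable } A\subset(0,T),
\]
which passes directly under $L^1$-convergence along an arbitrary net and is equivalent to the a.e.\ pointwise bound. Combined with weak-* lower semicontinuity of the $\TV$-norm (which you also use), this gives genuine hybrid closedness. Your argument is easily repaired by adopting this integral reformulation in place of the subsequence extraction.
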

\begin{proof}
The cone $\sC_{m,\epsilon}$ is contained in the total-variation ball $\sB_m^\TV$ and is clearly $L^1$-bounded, so it is relatively compact as discussed in Section~\ref{subsec:BV}. We thus need to show that $\sC_{m,\epsilon}$ is hybrid closed. To that aim, take a hybrid-convergent net $(c\super{\omega},w\super{\omega})_{\omega} \subset \sC_{m,\epsilon}$ with limit $(c,w)$. By the weak-* lower semicontinuity of the $\TV$-norm it follows that $\lVert (\dot c,\dot w)\rVert_{\TV}\leq m$. Moreover, the pointwise bound implies that,
\begin{align*}
  &\int_A\!\Big(\big\lvert(c\super{\omega}(t),w\super{\omega}(t))-(\tilde c(0),0)\big\rvert - \epsilon-mt\Big)\,\dd t \leq 0 & \forall\text{ measurable } A\subset(0,T),
  \intertext{hence, after taking the limit in $\omega$,}
  &\int_A\!\Big(\big\lvert(c(t),w(t))-(\tilde c(0),0)\big\rvert - \epsilon-mt\Big)\,\dd t \leq 0 & \forall\text{ measurable } A\subset(0,T),
\end{align*}
which is equivalent to the pointwise bound $\lvert(c(t),w(t))-(\tilde c(0),0)\rvert \leq \epsilon+mt$ for the limit.
\end{proof}

We first show exponential tightness for the conditional measures.
\begin{lemma}[Uniform Exponential tightness of conditional measures]
Let $\zeta \in C_\mathrm{c}(0,T; \RR^\sR)$ and assume $\bk\super{r}$ is bounded on stoichiometric simplices (Assumption~\ref{ass:rrate conditions}\eqref{it:rrate upper bound}). Fix any convergent sequence $\tfrac1V\NN_0^\sR\ni\tilde c\super{V}(0)\to\tilde c(0)\in\RR^\sR_+$ and let $\tilde\PP_\zeta\super{V}$ be the law of the Markov process with generator $\sQ_{\zeta, t}\super{V}$ and initial distribution $\delta_{c\super{V}(0)}$. Then for any $\epsilon$ and $\eta>0$ there exists an $m$ (not depending on the choice $\tilde c(0)$) such that
\begin{equation*}
  \mfrac1V\log\tilde\PP_\zeta\super{V}\big(\sC_{m,\epsilon}^\mathsf{c}\big) \leq -\eta.
\end{equation*}
\end{lemma}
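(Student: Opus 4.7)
The plan is a classical exponential-martingale plus Doob plus Chernoff argument, using that under $\tilde\PP_\zeta\super{V}$ the total jump rate along the trajectory is uniformly bounded. By Assumption~\ref{ass:rrate conditions}(i),(ii),(iv) the path stays in $\sS(C\super{V}(0)) \subset \sS_\delta(\tilde c(0))$ for some $\delta>0$ and all $V$ large; $\frac1V k\super{V,r}\to\bar k\super{r}$ uniformly on $\sS_\delta(\tilde c(0))$, and $\bar k\super{r}$ is bounded there. Combined with the uniform bound on $\e^{\zeta(t)\cdot\gamma\super{r}}$ (since $\zeta$ is continuous with compact support), this yields a constant $K=K(\tilde c(0),\zeta)$ such that
\[
  \sum_{r\in\sR} \mfrac1V k\super{V,r}(C\super{V}(t))\,\e^{\zeta(t)\cdot\gamma\super{r}} \leq K
  \qquad\text{a.s.\ for }V\text{ large enough.}
\]
Crucially, $K$ depends only on the \emph{limit} $\tilde c(0)$ through $\sS_\delta(\tilde c(0))$, not on the particular approximating sequence $\tilde c\super{V}(0)$, which delivers the uniformity of $m$ claimed in the lemma.

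Set $\Sigma\super{V}(t):=\sum_{r\in\sR} W\super{V,r}(t)$. For each $\lambda>0$, Dynkin's formula (applied to $\exp(V\lambda\sum_r w\super{r})$) together with the rate bound shows that
\[
  M^\lambda(t) := \exp\!\Big(V\lambda\Sigma\super{V}(t) - (\e^\lambda{-}1)\!\int_0^t\!\sum_{r\in\sR} k\super{V,r}(C\super{V}(s))\,\e^{\zeta(s)\cdot\gamma\super{r}}\,\dd s\Big)
\]
is a nonnegative local martingale with $M^\lambda(0)=1$; since jumps are multiplicative by $\e^\lambda$ and the compensator is bounded by $(\e^\lambda{-}1)VKt$, $M^\lambda$ is a true martingale on $[0,T]$. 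The rate bound gives $M^\lambda(t) \geq \exp(V(\lambda\Sigma\super{V}(t) - (\e^\lambda{-}1)Kt))$, so Doob's maximal inequality yields, for every $a>0$,
\[
  \tilde\PP_\zeta\super{V}\!\Big(\sup_{t\in[0,T]}\!\big\{\Sigma\super{V}(t) - \mfrac{(\e^\lambda-1)K}{\lambda}t\big\} > a\Big) \;\leq\; \e^{-V\lambda a}.
\]

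Next I translate the cone condition into a running-supremum bound on this single scalar process. Since $W\super{V}$ is non-decreasing with $W\super{V}(0)=0$ and $C\super{V}(t)=C\super{V}(0)+\Gamma W\super{V}(t)$, one has $\abs{W\super{V}(t)}=\Sigma\super{V}(t)$, $\abs{C\super{V}(t)-\tilde c(0)}\leq \abs{C\super{V}(0)-\tilde c(0)} + \norm{\Gamma}\,\Sigma\super{V}(t)$, $\norm{\dot W\super{V}}_\TV = \Sigma\super{V}(T)$, $\norm{W\super{V}}_{L^1}\leq T\Sigma\super{V}(T)$, and analogous bounds for $C\super{V}$. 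Since $\abs{C\super{V}(0)-\tilde c(0)}\to 0$, there exist constants $C_1,C_2>0$ depending only on $T,\norm{\Gamma},\abs{\tilde c(0)}$ such that, for $V$ sufficiently large,
\[
  \{(C\super{V},W\super{V})\notin\sC_{m,\epsilon}\}\;\subseteq\; \Big\{\sup_{t\in[0,T]}\big(\Sigma\super{V}(t)-m''t\big)>\epsilon''\Big\},
\]
with $m'':=(m-C_1)/C_2$ and $\epsilon'':=\epsilon/(2(\norm{\Gamma}+1))$. To finish, fix $\lambda$ so that $\lambda\epsilon''\geq\eta$, then choose $m$ large enough that $m''\geq (\e^\lambda-1)K/\lambda$; the Doob estimate above gives $\tilde\PP_\zeta\super{V}(\sC_{m,\epsilon}^{\mathsf c})\leq \e^{-V\eta}$, as required.

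The main obstacle is the bookkeeping reducing both components of $\sC_{m,\epsilon}^{\mathsf c}$ (the $L^1{+}\TV$ ball and the pointwise cone-shell constraint) simultaneously to a running-supremum bound on the scalar $\Sigma\super{V}$. Once this reduction is in place, boundedness of the jump rate on the stoichiometric simplex makes the remainder essentially the classical exponential Chernoff/Doob argument for a Poisson-type counting process.
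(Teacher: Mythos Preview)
Your argument is essentially correct and follows a genuinely different route from the paper. The paper bounds the total reaction count by a Poisson process of rate $V\lambda$ and then handles the two constraints defining $\sC_{m,\epsilon}$ separately: for the $\sB_m^\TV$-part it uses a single Chernoff bound on $\Sigma\super{V}(T)$; for the pointwise cone constraint it discretises $[0,T]$ into intervals of length $\delta$, observes that escaping the step-function $\sigma_{\delta,\epsilon}$ forces more than $\tfrac\epsilon2$ worth of jumps in some interval, and concludes by a union bound plus a Poisson--Chernoff estimate on each interval. Your exponential-martingale plus Doob maximal inequality bypasses the discretisation and union bound entirely, controlling the running supremum $\sup_t(\Sigma\super{V}(t)-ct)$ in one stroke; this is arguably cleaner, though it invokes a slightly heavier tool than the paper's elementary Poisson domination.

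One point of bookkeeping needs care. Your single inclusion
\[
  \big\{(C\super{V},W\super{V})\notin\sC_{m,\epsilon}\big\}\subseteq\big\{\sup_t(\Sigma\super{V}(t)-m''t)>\epsilon''\big\}
\]
with the specific choice $m''=(m-C_1)/C_2$ does not quite cover the $\sB_m^\TV$-part: violation of the ball only gives $\Sigma\super{V}(T)>m''$, and $\Sigma\super{V}(T)-m''T$ can be negative when $T\geq1$. The fix is immediate: either treat the ball constraint by a separate Chernoff bound on $\Sigma\super{V}(T)$ (as the paper does), or take $m''=m/C_3$ with $C_3>\max\{\lVert\Gamma\rVert+1,\,C_2T\}$ so that for $m$ large both failure modes force $\sup_t(\Sigma\super{V}(t)-m''t)>\epsilon''$. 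With this adjustment your proof goes through.
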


\begin{proof}
For a $\delta>0$ to be determined later, define the set (see Figure~\ref{fig:discretised cone}):
\begin{align*}
  \Sigma_{\delta,\epsilon}
    &:=\Big\{ 
   (c, w)\in \BV(0,T;\RR^\sY\times\RR^\sR_+):
    \big\lvert (c(t),w(t)) - (\tilde c(0),0)\big\rvert\leq \sigma_{\delta,\epsilon}(t)\quad t-\ae \Big\},\\
  \sigma_{\delta,\epsilon}(t)
    &:= \epsilon + \sum_{l=1}^{\lfloor T/\delta\rfloor} \tfrac12\epsilon l\mathds1_{\lbrack l\delta,(l+1)\delta)}(t).
\end{align*}
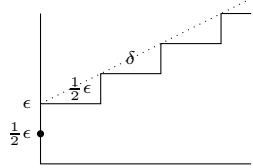
\begin{figure}[h!]
\centering
\begin{tikzpicture}[scale=0.8]
  \tikzstyle{every node}=[font=\scriptsize]
  \draw(0,2.5)--(0,0)--(3.5,0);
  \filldraw(0,0.5) node[anchor=east]{$\tfrac12\epsilon$} circle(0.05);
  \draw(0,1) node[anchor=east]{$\epsilon$} -- (1,1)-- node[midway,anchor=east]{$\tfrac12\epsilon$} (1,1.5) -- node[midway,anchor=south]{$\delta$} (2,1.5)--(2,2)--(3,2)--(3,2.5)--(3.5,2.5);
  \draw[dotted](0,1)--(3.5,2.75);
\end{tikzpicture}
\caption{the function~$\sigma_{\delta,\epsilon}(t)\leq\epsilon + \tfrac\epsilon{2\delta}t$.}
\label{fig:discretised cone}
\end{figure}
Then $\sC_{\epsilon/(2\delta),\epsilon} \supset \Sigma_{\delta,\epsilon}\cap \sB^\TV_{\epsilon/(2\delta)}$ and so it suffices to prove that for any $\eta>0$ we can find $m,\delta>0$ such that
\begin{align}
  \limsup_{V\to\infty}\tfrac1n\log\tilde\PP\super{V}(\Sigma_{\delta,\epsilon}^\mathsf{c})&\leq -\eta \qquad{and}
\label{eq:discr cone exp tight}\\
  \limsup_{V\to\infty}\tfrac1n\log\tilde\PP\super{V}({\sB^\TV_m}^\mathsf{c})&\leq -\eta.
\label{eq:TV ball exp tight}
\end{align}

Observe that by the convergence of the initial condition, for $V$ sufficiently large and $\tilde\PP\super{V}$-almost surely,
\begin{equation}
  \lvert(c(0),w(0))-(\tilde c(0),0)\rvert \leq \tfrac12\epsilon.
\label{eq:init cond arbitrary close}
\end{equation}
To prove \eqref{eq:TV ball exp tight}, observe that the Markov jump process $\sum_{r\in\sR}W\super{V,r}(t)$ is bounded by a Poisson process $\tfrac1V N_{V\lambda}(t)$ with $\lambda := e^{\norm{\Gamma}\norm{\zeta}_\infty}\sup_{\sS_{1/2\epsilon}(\tilde c(0))} \sum_{r\in\sR} 1+\bk\super{r}<\infty$ due to Assumptions~\ref{ass:rrate conditions}\eqref{it:rrate convergence} and \eqref{it:rrate upper bound}. A standard Chernoff bound therefore yields
\begin{align*}
  \tilde\PP\super{V}({\sB^\TV_m}^\mathsf{c}) &= \tilde\PP\super{V}\big(\big\{ {\textstyle\sum_{r\in\sR} W\super{V,r}(T)>m} \big\}\big) \\
                                             &\leq \Prob\big( N_{V\lambda}(T)> Vm \big) 
                                                        \leq e^{V\lambda T V - nm} 
                                                        = e^{-V\eta},
\end{align*}
if we choose $m:=\lambda Te+\eta$.

We now prove \eqref{eq:discr cone exp tight}. Because of \eqref{eq:init cond arbitrary close} we may assume that for any $(c,w)\in \Sigma_{\delta,\epsilon}^\mathsf{c}$ there exists an interval $(l\delta,(l+1)\delta)$ on which the process has jumped more than $\tfrac12\epsilon$. Since the norm of each jump is bounded from below by $\tfrac1V$ (the $W$-coordinate always jumps at least that length) we can estimate:
\begin{align*}
  \tilde\PP\super{V}(\Sigma_{\delta,\epsilon}^\mathsf{c})
  &\leq \tilde\PP\super{V}\Big({\textstyle \bigcup_{l=1}^{\lfloor T/\delta\rfloor} \big\{ \tfrac1V \sum_{r\in\sR} VW\super{V,r}((l+1)\delta) - VW\super{V,r}(l\delta)>\tfrac\epsilon2\big\}}\Big) \\
  & \leq \sum_{l=1}^{\lfloor T/\delta\rfloor} \Big({\textstyle \big\{ \tfrac1V \sum_{r\in\sR} VW\super{V,r}((l+1)\delta) - VW\super{V,r}(l\delta)>\tfrac\epsilon2\big\}}\Big) \\
  &\leq \mfrac{T}{\delta}\Prob\big( N_{V\lambda}(\delta) > \tfrac{V\epsilon}2\big) \\
  &\leq \mfrac{T}{\delta} e^{-V s(\epsilon/2 \vert \lambda\delta)},
\end{align*}
where the latter is found by first applying a Chernoff bound to $\Prob\big( aN_{V\lambda}(\delta) > \tfrac{aV \epsilon}2\big)$ for arbitrary $a>0$ and then minimising over $a$. With the choice
\begin{equation*}
  \delta := \mfrac{\epsilon}{2\lambda}\exp\big( - \mfrac{2\eta}{\epsilon} -1 \big),
\end{equation*}
we find $s(\epsilon/2 \vert \lambda\delta) = \eta + \tfrac{\epsilon}{2}e^{-2\eta/\epsilon-1} \geq \eta$ which proves \eqref{eq:discr cone exp tight}.
\end{proof}

From \cite[Prop.~6]{Biggins2004} we now immediately obtain exponential tightness under the initial distribution $\mu\super{V}$:
\begin{corollary}[Exponential tightness]
Let $\zeta \in C_\mathrm{c}(0,T; \RR^\sR)$, assume $\bk\super{r}$ is bounded on stoichiometric simplices (Assumption~\ref{ass:rrate conditions}\eqref{it:rrate upper bound} and let $\mu\super{V}$ be exponentially tight (Assumption~\ref{ass:cont initial ldp}\eqref{it:initial exp tight}). Then $\PP_\zeta\super{V}$ (under initial distribution $\mu\super{V}$) is exponentially tight.
\label{prop:exp tight cone}
\end{corollary}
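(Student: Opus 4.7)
The plan is to combine the uniform conditional exponential tightness established in the previous lemma with the exponential tightness of the initial distribution $\mu\super{V}$, via a mixture argument in the spirit of \cite[Prop.~6]{Biggins2004}. Writing $\tilde\PP_{\zeta,\tilde c(0)}\super{V}$ for the law of the Markov process with generator $\sQ\super{V}_{\zeta,t}$ started deterministically from $\tilde c(0)\in\tfrac1V\NN_0^\sY$ (with zero fluxes), one has the disintegration
\begin{equation*}
  \PP_\zeta\super{V}(\cdot) \;=\; \int_{\RR^\sY_+}\!\tilde\PP_{\zeta,\tilde c(0)}\super{V}(\cdot)\,\mu\super{V}(\dd\tilde c(0)),
\end{equation*}
and the goal is to bound the measure of $\sC_{m,\epsilon}^\mathsf{c}$ for suitably chosen $m$.

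The first step is to verify that the constant $m$ in the previous lemma can be chosen uniformly over $\tilde c(0)$ ranging in a compact set $K\subset\RR^\sY_+$. Inspecting the proof, the only dependence on $\tilde c(0)$ enters through the Poissonian upper bound $\lambda=\e^{\norm{\Gamma}\norm{\zeta}_\infty}\sup_{\tilde c\in\sS_{\epsilon/2}(\tilde c(0))}\sum_{r\in\sR}\bigl(1+\bk\super{r}(\tilde c)\bigr)$, which by Assumption~\ref{ass:rrate conditions}\eqref{it:rrate upper bound} is bounded uniformly as $\tilde c(0)$ varies over $K$. Hence for any $\epsilon,\eta>0$ there is a single $m=m(K,\epsilon,\eta)$ such that $\tfrac1V\log\tilde\PP_{\zeta,\tilde c(0)}\super{V}(\sC_{m,\epsilon}^\mathsf{c})\leq -\eta$ for all $\tilde c(0)\in K$ and all $V$ large.

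The second step is the mixture bound. Given $\eta>0$, exponential tightness of $\mu\super{V}$ (Assumption~\ref{ass:cont initial ldp}\eqref{it:initial exp tight}) furnishes a compact $K_\eta\subset\RR^\sY_+$ with $\tfrac1V\log\mu\super{V}(K_\eta^\mathsf{c})\leq -\eta$ for all $V$ large. Applying the uniform bound of the first step on $K_\eta$ and splitting the mixture integral,
\begin{equation*}
  \PP_\zeta\super{V}(\sC_{m,\epsilon}^\mathsf{c}) \;\leq\; \int_{K_\eta}\!\tilde\PP_{\zeta,\tilde c(0)}\super{V}(\sC_{m,\epsilon}^\mathsf{c})\,\mu\super{V}(\dd\tilde c(0)) \;+\; \mu\super{V}(K_\eta^\mathsf{c}) \;\leq\; 2\e^{-V\eta},
\end{equation*}
which yields $\tfrac1V\log\PP_\zeta\super{V}(\sC_{m,\epsilon}^\mathsf{c})\leq -\eta+o(1)$ and hence the claimed exponential tightness, since each cone $\sC_{m,\epsilon}$ is hybrid-compact by the earlier lemma. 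The only non-routine point is the compact-set uniformity of $m$ isolated in the first step; once that is observed, the mixture step is the standard Biggins argument and requires nothing further.
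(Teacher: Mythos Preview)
Your argument is essentially the same as the paper's, which simply invokes \cite[Prop.~6]{Biggins2004}; you have unpacked that mixture argument explicitly, and correctly identified that the only model-specific input is the uniformity of the Poisson bound $\lambda$ over $\tilde c(0)$ in a compact set.

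One notational point worth tightening: the cone $\sC_{m,\epsilon}$ in the previous lemma is \emph{centred} at a fixed limit point $\tilde c(0)$, so as written your final integral $\int_{K_\eta}\tilde\PP_{\zeta,\tilde c(0)}\super{V}(\sC_{m,\epsilon}^\mathsf{c})\,\mu\super{V}(\dd\tilde c(0))$ is ambiguous---either the set varies with the integration variable (in which case you are not bounding the measure of a single compact complement) or it is fixed (in which case the lemma's bound \eqref{eq:init cond arbitrary close} need not hold for all starting points in $K_\eta$). The fix is immediate: either enlarge $\epsilon$ so that $K_\eta$ lies in the $\epsilon/2$-ball about a single chosen centre, or simply use the TV-balls $\sB_m^\TV$, whose exponential bound \eqref{eq:TV ball exp tight} already depends on $\tilde c(0)$ only through $\lambda$ and which are themselves hybrid-compact. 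With either adjustment your proof goes through.
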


\begin{remark} The results in this section apply in particular to the $\PP\super{V} = \PP_0\super{V}$.
\end{remark}

\subsection{Lower bound}
\label{subsec:lower bound}

\begin{proposition} Let $\mu\super{V}$ satisfy Assumption~\ref{ass:cont initial ldp} and $\bk$ satisfy Assumptions~\ref{ass:rrate conditions}. For any hybrid-open set $\sO\subset\BV\big(0,T;\RR^\sY\times\RR^\sR\big)$,
\begin{equation*}
  \liminf_{V\to\infty} \frac1V \log \PP\super{V}(\sO) \geq - \inf_{(c,w)\in\sO} \I_0(c(0))+\J(c,w).
\end{equation*}
\label{prop:lower bound}
\end{proposition}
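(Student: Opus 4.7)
The plan is a fairly classical change-of-measure argument combined with the approximation results of Section~\ref{subsec:approximations}, using the double tilting hinted at in the introduction. First I would reduce to a pointwise statement: by standard open-set / infimum manipulations, it suffices to show that for every $(c,w)$ with $\I_0(c(0))+\J(c,w)<\infty$ and every hybrid-open neighbourhood $\sO$ of $(c,w)$,
\begin{equation*}
  \liminf_{V\to\infty}\tfrac{1}{V}\log\PP\super{V}(\sO) \geq -\I_0(c(0))-\J(c,w).
\end{equation*}
By Corollary~\ref{cor:approx} together with the hybrid lower semicontinuity of $\J$ (see the remark after~\eqref{eq:LDP rate as sup}), it is further enough to prove this bound for $(c,w)\in\sA$, i.e. for curves whose optimal perturbation $\zeta:=\log(\dot w/\bk(c))$ lies in $C_c^1(0,T;\RR^\sR)$.

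For such a regular $(c,w)$ I would construct two independent tilts. On the dynamic side, tilt the jump rates by $e^{\zeta\super{r}(t)}$ as in~\eqref{eq:perturbed generator}; by construction the perturbed ODE~\eqref{eq:perturbed equation general} with initial datum $(c(0),0)$ has unique solution $(c,w)$, so Proposition~\ref{th:unperturbed convergence det} tells us that the resulting tilted dynamic law is asymptotically concentrated on $(c,w)$. On the initial side, pick a subgradient $\xi_0\in\partial\I_0(c(0))$ (which exists by Assumption~\ref{ass:cont initial ldp}(vi)) and tilt
\begin{equation*}
   \mu\super{V}_{\xi_0}(\dd c_0) := \frac{\e^{V\xi_0\cdot c_0}}{Z\super{V}}\,\mu\super{V}(\dd c_0),\qquad Z\super{V}:=\int\!\e^{V\xi_0\cdot c_0}\,\mu\super{V}(\dd c_0).
\end{equation*}
The integrability hypothesis~\ref{ass:cont initial ldp}(v) enables Varadhan's integral lemma, giving $\tfrac{1}{V}\log Z\super{V}\to \I_0^*(\xi_0)=\xi_0\cdot c(0)-\I_0(c(0))$, and standard tilting then yields that $\mu\super{V}_{\xi_0}$ satisfies a LDP with rate $\I_0(\cdot)-\xi_0\cdot(\cdot)+\I_0^*(\xi_0)$; by convexity and continuity of $\I_0$ this rate vanishes on a convex set containing $c(0)$, so $\mu\super{V}_{\xi_0}$ concentrates narrowly on $\delta_{c(0)}$. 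Combining both tilts gives a path measure $\widetilde\PP\super{V}_\zeta$ (with initial law $\mu\super{V}_{\xi_0}$) satisfying $\widetilde\PP\super{V}_\zeta(\sO)\to1$.

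The change-of-measure formula for pure-jump Markov processes (Theorem~\ref{th:change of measure general}) expresses
\begin{equation*}
  \PP\super{V}(\sO) = \int_\sO\!\frac{\dd\PP\super{V}}{\dd\widetilde\PP_\zeta\super{V}}\,\dd\widetilde\PP_\zeta\super{V},
\end{equation*}
and Jensen's inequality applied to the conditional expectation under $\widetilde\PP_\zeta\super{V}(\,\cdot\mid\sO)$ yields
\begin{equation*}
  \tfrac{1}{V}\log\PP\super{V}(\sO) \geq \tfrac{1}{V}\log\widetilde\PP_\zeta\super{V}(\sO) + \frac{1}{\widetilde\PP_\zeta\super{V}(\sO)}\,\EE_{\widetilde\PP_\zeta\super{V}}\!\Big[\tfrac{1}{V}\log\tfrac{\dd\PP\super{V}}{\dd\widetilde\PP_\zeta\super{V}}\one_\sO\Big].
\end{equation*}
The initial part of the log Radon--Nikodym derivative gives $\tfrac{1}{V}\log Z\super{V}-\xi_0\cdot C\super{V}(0)\to-\I_0(c(0))$ in probability under $\widetilde\PP_\zeta\super{V}$, and the Girsanov-type dynamic part converges to $-\int_0^T\zeta(t)\cdot\dot w(t)\,\dd t+\int_0^T H(c(t),\zeta(t))\,\dd t=-G(c,w,\zeta)$, which by the choice of $\zeta$ and Proposition~\ref{prop:J=tilde J} equals $-\J(c,w)$. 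Passing these convergences through the expectation concludes the proof.

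The main obstacle is upgrading the in-probability convergences of the log-Radon--Nikodym derivative to convergence in $\widetilde\PP_\zeta\super{V}$-expectation (or at least convergence on a set that retains probability one in the limit), so that Jensen's bound actually produces the required liminf. This is the standard technical step in such tilting arguments and is typically handled by observing that $\zeta$ and $\bk(c)$ are uniformly bounded on the stoichiometric simplex containing the limit curve (Assumption~\ref{ass:rrate conditions}(iv)), which controls both the jump contribution $\sum_r\zeta\super{r}(t)\,W\super{V,r}$ and the compensator $\int_0^T(\e^{\zeta\super{r}}-1)\tfrac{1}{V}k\super{V,r}(C\super{V})\,\dd t$ uniformly, and by truncating the integrand against a hybrid-compact cone $\sC_{m,\epsilon}$ outside of which exponential tightness (Corollary~\ref{prop:exp tight cone}) makes any contribution negligible. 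A secondary subtlety is the possible non-uniqueness of minimisers of the tilted initial rate; the convexity of $\I_0$ and the specific choice of subgradient $\xi_0\in\partial\I_0(c(0))$ ensure that $c(0)$ lies in the (convex) argmin set, so that narrow concentration of $\mu\super{V}_{\xi_0}$ near $c(0)$ is all that is needed.
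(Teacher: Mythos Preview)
Your overall strategy coincides with the paper's: reduce to curves in $\sA$ via Corollary~\ref{cor:approx}, then apply a double tilt, exponentially tilting the initial law by a subgradient $z\in\partial\I_0(c(0))$ and the dynamics by $\zeta=\log(\dot w/\bk(c))\in C_c^1$, so that the tilted path measure concentrates on $(c,w)$ by Proposition~\ref{th:unperturbed convergence det}, and the change-of-measure Theorem~\ref{th:change of measure general} produces the Radon--Nikodym derivative $\tfrac1V\log\tfrac{\dd\PP\super{V}}{\dd\PP\super{V}_{\zeta,z}}=-G\super{V}(\cdot,\cdot,\zeta)-z\cdot c'(0)+\Lambda\super{V}(z)$.

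The genuine methodological difference lies in how this derivative is turned into a lower bound. You invoke Jensen's inequality and then need $L^1$-convergence of the log-derivative under the tilted measure, which (as you correctly identify) is the non-trivial step and would require a uniform-integrability argument layered on top of the exponential tightness. The paper bypasses this entirely: instead of integrating, it takes the \emph{pointwise infimum} of the log-derivative over the explicit set
\[
  \sO\cap\sG_\epsilon^\zeta\lbrack c,w\rbrack\cap\pi_0^{-1}\lbrack\sB_\epsilon(c(0))\rbrack,
\]
where $\sG_\epsilon^\zeta=\{(c',w'):|G(c',w',\zeta)-G(c,w,\zeta)|<\epsilon\}$. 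On this set each of the three summands is deterministically $\epsilon$-close to its target value (using~\eqref{eq:GV bound} to pass from $G\super{V}$ to $G$), so the infimum is at least $-\I_0(c(0))-\J(c,w)-\mathrm{const}\,\epsilon$; and the tilted measure of this set tends to~$1$ by Proposition~\ref{th:unperturbed convergence det} and the Portmanteau theorem (hybrid-openness of $\sG_\epsilon^\zeta$ is immediate since $\zeta$ has compact support). What this buys is that no integrability or moment control is needed at all---the argument is purely topological once the sets are chosen. Your route is workable in principle, but the paper's is both shorter and avoids precisely the obstacle you singled out.
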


\begin{proof} Recall the definition of the set $\sA$ in \eqref{eq:suff regular curves for Girsanov}. Choose an arbitrary hybrid-open set $\sO\subset\BV(0,T;\RR^\sY\times\RR^\sR)$. From Lemma~\ref{lem:lower bound for suff reg curves} proven below, it follows that
\begin{equation*}
  \liminf_{V\to\infty} \frac1V \log \PP\super{V}(\sO) \geq - \inf_{(c,w)\in\sO\cap\sA} \I_0(c(0))+\J(c,w).
\end{equation*}
By Corollary~\ref{cor:approx} it then follows that
\begin{equation*}
  \inf_{(c,w)\in\sO} \I_0(c(0))+\J(c,w)=\inf_{(c,w)\in\sO\cap\sA} \I_0(c(0))+\J(c,w).
\end{equation*}

\end{proof}

For the lower bound it thus remains to prove the following:

\begin{lemma} Let Assumption~\ref{ass:rrate conditions} on the rates and Assumption~\ref{ass:cont initial ldp} on the initial distribution hold. Let $\sO\subset\BV\big(0,T;\RR^\sY\times\RR^\sR\big)$ be any hybrid-open set, and $\sA$ be the set~\eqref{eq:suff regular curves for Girsanov}. Then for any $(c,w)\in \sO\cap \sA$,
\begin{equation}
  \liminf_{V\to\infty} \frac1V \log \PP\super{V}(\sO) \geq - \I_0\big(c(0)\big) - \J(c,w).
\label{eq:lower bound regular curves}
\end{equation}
\label{lem:lower bound for suff reg curves}
\end{lemma}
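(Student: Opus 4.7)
The plan is to apply a classical exponential-tilt argument, using that for $(c,w)\in\sA$ the function $\zeta:=\log(\dot w/\bk(c))$ already lies in $C_c^1(0,T;\RR^\sR)$ and realises the pointwise supremum in $G(c,w,\cdot)$ from \eqref{eq:G}, so that by Proposition~\ref{prop:J=tilde J} one has $G(c,w,\zeta)=\tilde\J(c,w)=\J(c,w)$. Since $(c,w)$ is thereby the unique solution of~\eqref{eq:perturbed equation general} with this $\zeta$ and initial datum $c(0)$, Proposition~\ref{th:unperturbed convergence det} guarantees concentration of the perturbed law $\PP\super{V}_\zeta$ onto $(c,w)$.

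I first localise the initial condition. Fix a hybrid-open neighbourhood $\sO'\subset\sO$ of $(c,w)$ and an open Euclidean ball $U\ni c(0)$, and condition on $C\super{V}(0)$ to obtain
\begin{equation*}
  \PP\super{V}(\sO) \;\geq\; \mu\super{V}(U) \cdot \inf_{\tilde c\super{V}(0)\in U} \PP\super{V}_{\delta_{\tilde c\super{V}(0)}}(\sO'),
\end{equation*}
where $\PP\super{V}_{\delta_{\tilde c\super{V}(0)}}$ denotes the law under a deterministic initial datum $\tilde c\super{V}(0)$. The LDP lower bound for $\mu\super{V}$ paired with continuity of $\I_0$ (Assumption~\ref{ass:cont initial ldp}) contributes $-\I_0\big(c(0)\big)$ to $\liminf_V V^{-1}\log\PP\super{V}(\sO)$ after shrinking $U$. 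For the second factor I invoke the Girsanov-type change of measure (Theorem~\ref{th:change of measure general}) from $\PP\super{V}_{\delta_{\tilde c\super{V}(0)}}$ to the tilted law $\PP\super{V}_{\zeta,\delta_{\tilde c\super{V}(0)}}$, whose rescaled log-density reads
\begin{equation*}
  \tfrac{1}{V}\log\mfrac{\dd\PP\super{V}_{\delta_{\tilde c\super{V}(0)}}}{\dd\PP\super{V}_{\zeta,\delta_{\tilde c\super{V}(0)}}}(\tilde c,\tilde w)
  = -\int_0^T\!\zeta(t)\cdot\dd\tilde w(t) + \tfrac{1}{V}\int_0^T\!\sum_{r\in\sR} k\super{V,r}\big(\tilde c(t)\big)\big(\e^{\zeta\super{r}(t)}-1\big)\,\dd t,
\end{equation*}
and converges in probability under $\PP\super{V}_\zeta$ to $-G(c,w,\zeta)=-\J(c,w)$ by Proposition~\ref{th:unperturbed convergence det} together with Assumption~\ref{ass:rrate conditions}\eqref{it:rrate convergence} and continuity of $\bk$. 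Restricting the change-of-measure identity to the event $\sO'\cap\big\{V^{-1}\log(\dd\PP\super{V}/\dd\PP\super{V}_\zeta)\geq -\J(c,w)-\epsilon\big\}$, whose tilted probability tends to one, yields $\liminf_V V^{-1}\log\PP\super{V}_{\delta_{\tilde c\super{V}(0)}}(\sO')\geq-\J(c,w)-\epsilon$, after which $\epsilon\downarrow0$ closes the argument and gives~\eqref{eq:lower bound regular curves}.

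The main obstacle I anticipate is the uniform convergence in probability of the rescaled log-density to $-\J(c,w)$: the stochastic integral $\int\zeta\cdot\dd\tilde w$ must be paired with the deterministic time-integral, and exceptional fluctuations of the density must have tilted probability vanishing at the right rate. Here the compact support of $\zeta$ eliminates boundary contributions at $t=0^+$ and $t=T$ (the reason $\sA$ demands $C_c^1$ rather than $C_b^1$), while boundedness of $\zeta$ together with the uniform bound on $\bk$ over stoichiometric simplices (Assumption~\ref{ass:rrate conditions}\eqref{it:rrate upper bound}) ensure integrability of the tilt and let the hydrodynamic convergence from Proposition~\ref{th:unperturbed convergence det} propagate to the density on an event of tilted probability tending to one.
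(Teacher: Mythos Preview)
Your approach is correct but differs from the paper's in how the initial condition is handled. The paper performs a \emph{double} exponential tilt: besides tilting the dynamics by $\zeta$, it also tilts the initial measure via a subgradient $z\in\partial\I_0\big(c(0)\big)$, setting $\mu\super{V}_z(\dd\tilde c)\propto\e^{Vz\cdot\tilde c}\mu\super{V}(\dd\tilde c)$ and appealing to Varadhan's Lemma (Assumption~\ref{ass:cont initial ldp}(v),(vi)) to obtain $\Lambda\super{V}(z)\to z\cdot c(0)-\I_0(c(0))$ and narrow convergence $\mu\super{V}_z\to\delta_{c(0)}$. The combined tilted law $\PP\super{V}_{\zeta,z}$ then concentrates directly on $(c,w)$, and the initial contribution $-\I_0(c(0))$ drops out of the Radon--Nikodym density rather than from a separate application of the initial LDP.

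Your decoupling (condition on $C\super{V}(0)$, apply the initial LDP lower bound to $\mu\super{V}(U)$, then tilt only the dynamics from a deterministic start) is more elementary and, notably, avoids Assumption~\ref{ass:cont initial ldp}(v) and (vi) entirely---you use only the LDP lower bound and continuity of $\I_0$. The price is the infimum over $\tilde c\super{V}(0)\in U$: for each $V$ the worst initial point may be different, so Proposition~\ref{th:unperturbed convergence det} cannot be applied to a single converging sequence. You have correctly identified this as the main obstacle. It is resolved by a subsequence--compactness argument: along any sequence realising the infimum, pass to a subsequence with $\tilde c\super{V_k}(0)\to\hat c(0)\in\overline U$, apply Proposition~\ref{th:unperturbed convergence det} to get concentration on the perturbed-ODE solution $(\hat c,\hat w)$ from $\hat c(0)$, and then invoke continuous dependence of \eqref{eq:perturbed equation general} on its initial data (a Gr\"onwall estimate using Assumption~\ref{ass:rrate conditions}\eqref{it:rrate continuous},\eqref{it:rrate upper bound}) together with hybrid continuity of $G(\cdot,\cdot,\zeta)$ to ensure that, once $U$ is small enough, both $(\hat c,\hat w)\in\sO'$ and $|G(\hat c,\hat w,\zeta)-\J(c,w)|<\epsilon$. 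This step should be made explicit.
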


\begin{proof}
Take a pair $(c,w)\in \sO\cap \sA$, and let $\zeta:=\log\dot w/\bk(c) \in C_c^1\big(0,T;\RR^\sR\big)$ and $z\in\partial\I_0\big(c(0)\big)$, which is non-empty by Assumption~\ref{ass:cont initial ldp}. Without loss of generality we assume that $\I_0(c(0))+\J(c,w)<\infty$. 

We also define a perturbed initial distribution on $\RR^\sY$ by setting:
\begin{equation*}
  \mu\super{V}_z\big(d\tilde c(0)\big) =  \e^{V z\cdot \tilde c(0) - V\Lambda\super{V}(z)}\mu\super{V}\big(d\tilde c(0)\big), \qquad\text{with}\quad \Lambda\super{V}(z):=\tfrac1V\log\int\!\e^{V z\cdot\tilde c(0)}\mu\super{V}\big(d\tilde c(0)\big).
\end{equation*}
By Assumption~\ref{ass:cont initial ldp}, we can apply Varadhan's Lemma~\cite[Th.~4.3.1]{Dembo1998}, and so, combined with the assumption $z\in\partial\I_0\big(c(0)\big)$,
\begin{equation}
  \lim_{V\to\infty} \Lambda\super{V}(z) = \sup_{\tilde c(0)\in\RR^\sY_+} z\cdot\tilde c(0) - \I_0\big(\tilde c(0)\big) = z\cdot c(0) - \I_0\big(c(0)\big)=:\Lambda(z),
\label{eq:initial Varadhan}
\end{equation}
Since we assumed that $\I_0\big(c(0)\big)$ is finite, it follows that (at least for sufficiently large $V$), the value $\Lambda\super{V}(z)$ is finite. Naturally $\e^{V\Lambda\super{V}(z)}$ is simply a normalisation factor so that the perturbed $\mu\super{V}_z$ is a probability measure. We can now define the perturbed path measure
\begin{equation*}
  \PP_{\zeta,z}\super{V}(\dd w^\prime\, \dd c^\prime):=\int\!\PP_{\zeta}\super{V}\big(\dd w^\prime\,\dd c^\prime \mid c^\prime(0) = \tilde c(0)\big) \mu_z\super{V}\big(d\tilde c(0)\big).
\end{equation*}
The next step is to apply Theorem~\ref{th:change of measure general} to see that
\begin{equation}\label{eq:change of measure}
  \log\frac{\dd \PP\super{V}}{\dd \PP_{\zeta,z}\super{V}}\left(c^\prime,w^\prime\right)=
  -VG\super{V}(c^\prime, w^\prime, \zeta) -V z\cdot c^\prime(0) + V\Lambda\super{V}(z).
\end{equation}
When checking the applicability of the results from the appendix one may take
$K_{(c^\prime,w^\prime)} = \sS(c^\prime) \times \RR_+^\sR $.
To establish Assumption~\ref{it:change-of-measure ass 3} one observes that $\sup_{t\in(0,T)} \abs{c^\prime(t),w^\prime(t)}$ is bounded by $\abs{c^\prime(0)}$ plus a constant times the number of jumps up to time $T$, and that  under $\PP\super{V}$ the number of jumps is stochastically dominated by a Poisson random variable with finite expectation due to Assumption~\ref{ass:rrate conditions} parts~(\ref{it:rrate convergence})\&(\ref{it:rrate upper bound}). 

We now apply a standard tilting argument with respect to this measure. We first introduce the sets, for some arbitrary small $\epsilon>0$ (recall that $(c,w)$ is already fixed),
\begin{align}
  &\sG_\epsilon^\zeta=\sG_\epsilon^\zeta\lbrack c,w\rbrack := \Big\{(c^\prime,w^\prime) \in \BV\left(0,T;\RR^\sY\times\RR^\sR\right) \colon \big\lvert G(c^\prime,w^\prime,\zeta)-G(c,w,\zeta)\big\rvert<\epsilon \Big\},\quad\text{and}
  \label{eq:set G-ball}
  \\
  &\sB_\epsilon=\sB_\epsilon\lbrack c(0)\rbrack:=\big\{c^\prime(0)\in\RR^\sY_+: \lvert c^\prime(0)-c(0)\rvert<\epsilon \big\},
  \label{eq:set c0-ball}
\end{align}
and let $\pi_0\lbrack c^\prime\rbrack:=c^\prime(0)$.
Although $\sG_\epsilon^\zeta$ is not restricted to the positive cone, the probabilities are of course concentrated on non-negative concentrations and fluxes. Using \eqref{eq:change of measure}
\begin{align}
  &\frac1V\log\PP\super{V}\big(\sO\big) 
 \geq 
   \inf_{\substack{(c^\prime,w^\prime)\in \\ \sO\cap \sG_\epsilon^\zeta \cap \pi_0^{-1}\lbrack\sB_\epsilon\rbrack }}\Big\lbrack - z\cdot c^\prime(0) + \Lambda\super{V}(z) - G\super{V}(c^\prime,w^\prime,\zeta) \Big\rbrack
        + \frac1V\log\PP\super{V}_{\zeta,z}\big(\sO \cap \sG_\epsilon^\zeta \big),
\label{eq:lower bound estimate 1}
\intertext{where}
  &G\super{V}(c^\prime,w^\prime,\zeta):=\int_0^T\!\big\lbrack \zeta(t)\cdot \dot w^\prime(\dd t) - \sum_{r\in\sR}\tfrac1V k\super{V,r}\big(c^\prime(t)\big)\big(e^{\zeta\super{r}}(t)-1\big) \big\rbrack\,\dd t,
\label{eq:GV}
\end{align}
The first term is bounded by $-z\cdot\tilde c(0)\geq -z\cdot c(0) - \epsilon$ by definition of $\sB_\epsilon$; for the second term we use~\eqref{eq:initial Varadhan} so that
\begin{equation}
  \abs{\Lambda\super{V}(z) - \Lambda(z)} < \epsilon.
\label{eq:LambdaV bound}
\end{equation}
for $V$ sufficiently large. For the third term we estimate,
\begin{multline}
  \big\lvert G\super{V}(c^\prime, w^\prime,\zeta) - G(c,w,\zeta) \big\rvert
  \leq  \\
    \sup_{(c^\prime, w^\prime)\in \pi_0^{-1}\lbrack \sB_\epsilon\rbrack} 
      \abs{G\super{V}(c^\prime, w^\prime,\zeta)-G(c^\prime, w^\prime,\zeta)} 
      + \sup_{(c^\prime, w^\prime)\in  \sG_\epsilon^\zeta} \abs{G(c^\prime, w^\prime,\zeta)-G(c,w,\zeta)}\\
  \leq T(\e^{\lVert\zeta\rVert_{L^\infty}}+1) \sup_{ c^\prime \in \sS_\epsilon(c(0))} \sum_{r\in\sR}\lvert\tfrac1Vk\super{V,r}(c^\prime)-\bk\super{r}(c^\prime)\rvert
    + \epsilon \\
   \leq \const \epsilon,
\label{eq:GV bound}
\end{multline}
for sufficiently large $V$ because of Assumption~\ref{ass:rrate conditions}\eqref{it:rrate convergence}. For the last term in \eqref{eq:lower bound estimate 1} we use Proposition~\ref{th:unperturbed convergence det} together with the Portemanteau Theorem:
\begin{align*}
  \liminf_{V\to\infty} \PP\super{V}_{\zeta,z}\big(\sO \cap \sG_\epsilon^\zeta \cap \pi_0^{-1}\lbrack \sB_\epsilon\rbrack \big)
    &\geq 
  \liminf_{V\to\infty} \PP\super{V}_{\zeta,z}\big(\sO \cap \sG_\epsilon^\zeta \big)- \limsup_{V\to\infty}\PP\super{V}_{\zeta,z}\big( \pi_0^{-1}\lbrack \sB_\epsilon\rbrack^\mathsf{c} \big)\\
    &=
\liminf_{V\to\infty} \PP\super{V}_{\zeta,z}\big(\sO \cap \sG_\epsilon^\zeta \big)-\limsup_{V\to\infty} \mu\super{V}_{z}\big( \sB_\epsilon^\mathsf{c} \big) \geq 1,
\end{align*}
which is valid since $\sO \cap \sG_\epsilon^\zeta$ is hybrid-open by the continuity of $(c,w)\mapsto G(c,w,\zeta)$ (recall $\zeta\in C_c^1(0,T;\RR^\sR)$, and $\sB_\epsilon^\mathsf{c}$ is closed in $\RR^\sY$.

Putting all these estimates and convergence results together we find from \eqref{eq:lower bound estimate 1} that
\begin{align*}
  \liminf_{V\to\infty} \frac1V\log\PP\super{V}\big(\sO\big)\geq - z\cdot c(0) + \Lambda\super{V}(z) - G(c,w,\zeta) - \mathit{const}\,\epsilon = -\I_0\big(c(0)\big) - \J(c,w) - \mathit{const}\,\epsilon,
\end{align*}
as $\zeta$ and $z$ were chosen to make the final equality true, assuming convexity of $\I_0$. This proves the claim since $\epsilon$ was arbitrary.
\end{proof}

\subsection{Upper bound}
\label{subsec:upper bound}

For the upper bound we work first with a deterministic initial condition and then an argument of Biggins'  \cite{Biggins2004} to deduce the upper bound for the `mixture'.

\begin{lemma} Let $\bk$ satisfy Assumptions~\ref{ass:rrate conditions}\eqref{it:rrate continuous},\eqref{it:rrate convergence},\eqref{it:rrate upper bound}, and fix any convergent sequence $\tilde c\super{V}(0)\to \tilde c(0)$ in $\RR_+^\sY$.
Let $\tilde\PP\super{V}$ be the law of the Markov process with deterministic initial condition $\tilde c\super{V}(0)$ and the dynamics given by \eqref{eq:generator}.
Then for any hybrid-compact set $\sK\subset\BV\big(0,T;\RR^\sY\times\RR^\sR\big)$,
\begin{equation*}
  \limsup_{V\to\infty} \frac1V \log \tilde\PP\super{V}(\sK)
     \leq - \inf_{\substack{(c,w)\in\sK \\ c(0)=\tilde c(0)}} \J(c,w).
\end{equation*}
\label{lem:upper bound dynamic}
\end{lemma}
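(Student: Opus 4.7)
The plan is to follow the classical exponential-tilting route for the upper bound. The key observation is that for any $\zeta \in C_c^1(0,T;\RR^\sR)$ the quantity $\exp(VG^V(C^V,W^V,\zeta))$, with $G^V$ as in \eqref{eq:GV}, is an exponential martingale under $\tilde\PP^V$---this is built into the change-of-measure identity already used in the lower bound (Theorem~\ref{th:change of measure general}), and it applies here because the jump rates are uniformly bounded on every stoichiometric simplex. Consequently, for every Borel set $A \subset \BV(0,T;\RR^\sY\times\RR^\sR)$,
\begin{equation*}
  \tilde\PP^V(A) \leq \exp\!\Big( -V \inf_{(c,w) \in A} G^V(c,w,\zeta) \Big).
\end{equation*}

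Next I would transfer this bound from $G^V$ to $G$. The locally uniform convergence $\sup_{c \in \sS_\epsilon(\tilde c(0))} \sum_r \lvert \tfrac{1}{V}k^{V,r}(c) - \bk^r(c)\rvert \to 0$ from Assumption~\ref{ass:rrate conditions}(\ref{it:rrate convergence}), together with the boundedness of $e^{\zeta^r}$ for $\zeta \in C_c^1$, gives $\inf_A G^V(\cdot,\cdot,\zeta) \geq \inf_A G(\cdot,\cdot,\zeta) - o_V(1)$ whenever the projection of $A$ onto the concentration coordinate is relatively compact in $\RR^\sY_+$. This localisation is made available by the exponential tightness from Corollary~\ref{prop:exp tight cone}: for any $\eta>0$ one finds $m,\epsilon>0$ with $\tilde\PP^V(\sC_{m,\epsilon}^{\mathsf c}) \leq e^{-V\eta}$, so it is enough to estimate $\tilde\PP^V(\sK \cap \sC_{m,\epsilon})$ in what follows.

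To finish, set $I := \inf_{(c,w)\in\sK,\,c(0)=\tilde c(0)} \J(c,w)$ and fix $\alpha < I$. For every $(c^\ast,w^\ast)$ in the hybrid-compact set $\sK \cap \sC_{m,\epsilon}$ that additionally satisfies $c^\ast(0)=\tilde c(0)$ and $w^\ast(0)=0$, the identity $\J = \tilde\J$ from Proposition~\ref{prop:J=tilde J} produces some $\zeta^\ast \in C_c^1(0,T;\RR^\sR)$ with $G(c^\ast,w^\ast,\zeta^\ast) > \alpha$, and hybrid-continuity of $G(\cdot,\cdot,\zeta^\ast)$ promotes this to a hybrid-open neighborhood $\sU^\ast$ on which the strict inequality still holds. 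A finite subcover combined with the Chernoff estimate from the first paragraph, applied with the corresponding $\zeta_i$ on each piece, then yields $\limsup_V \tfrac{1}{V}\log\tilde\PP^V(\sK) \leq -\alpha$, and letting $\alpha \nearrow I$ concludes. The step I expect to be the main obstacle is precisely the handling of the initial condition inside the compact set: the hybrid topology does not separate paths by the values $c(0)$ and $w(0)$, so a naive covering might assign no useful $\zeta^\ast$ to points $(c^\ast,w^\ast) \in \sK$ with $c^\ast(0) \neq \tilde c(0)$. I would resolve this via the nested structure $\bigcap_{\epsilon>0}\sC_{m,\epsilon} \subset \{c(0+)=\tilde c(0),\,w(0+)=0\}$ together with the lower semi-continuity of $\J$, which ensures that shrinking $\epsilon$ drives the cone-restricted infimum of $\J$ monotonically up to $I$.
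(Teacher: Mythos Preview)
Your approach is essentially the paper's: an exponential-tilting/covering argument in the spirit of G\"artner--Ellis, using the change-of-measure formula (Theorem~\ref{th:change of measure general}) to obtain a Chernoff-type estimate, the uniform convergence of $\tfrac1V k\super{V,r}\to\bk\super{r}$ on stoichiometric simplices to pass from $G\super{V}$ to $G$, and the cones~$\sC_{m,\epsilon}$ to control the initial point.

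The one presentational difference concerns precisely the step you flag as the obstacle. You propose to choose~$\zeta^\ast$ only for points $(c^\ast,w^\ast)\in\sK\cap\sC_{m,\epsilon}$ with $c^\ast(0)=\tilde c(0)$, notice this fails to cover, and then repair via the nested cones. The paper proceeds more directly: it picks $\zeta\lbrack c,w\rbrack$ with $G(c,w,\zeta\lbrack c,w\rbrack)\geq\J(c,w)-\epsilon$ for \emph{every} $(c,w)\in\sK\cap\sC_{m,\epsilon}$, regardless of initial condition, and extracts a finite subcover of the whole set. The resulting bound is then
\[
  \limsup_{V\to\infty}\tfrac1V\log\tilde\PP\super{V}(\sK)
  \;\leq\;
  \Big(-\inf_{(c,w)\in\sK\cap\sC_{m,\epsilon}}\J(c,w)+\const\epsilon\Big)\vee\big(-\tfrac1\epsilon\big),
\]
and since $\sC_{m,\epsilon}\subset\pi_0^{-1}\sB_\epsilon\lbrack\tilde c(0)\rbrack$, the infimum on the right is at least $\inf\{\J(c,w):(c,w)\in\sK,\,c(0)\in\sB_\epsilon\lbrack\tilde c(0)\rbrack\}$; letting $\epsilon\to0$ and invoking lower semicontinuity of~$\J$ gives the claim. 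This is exactly the content of your proposed fix, just built in from the outset rather than retrofitted, so your proposal is correct.
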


\begin{proof}
We use an adaptation of the usual covering technique as in the proof of the G{\"a}rtner-Ellis Theorem~\cite[Th.~4.5.3]{Dembo1998}. Fix a convergent sequence $\tilde c\super{V}(0)\to \tilde c(0)$ in $\RR_+^\sY$, a hybrid-compact set $\sK\subset\BV\big(0,T;\RR^\sY\times\RR^\sR\big)$, and arbitrary $\epsilon>0$. 

To control the initial condition, we use the compact cones $\sC_{m,\epsilon}$ from the exponential tightness, Proposition~\ref{prop:exp tight cone}. In this proof we will take $m>0$ such that $\limsup_{V\to\infty}\frac1V\log\tilde\PP\super{V}\big( \sC_{m,\epsilon}^\mathsf{c})<-1/\epsilon$. Note that for $V$ sufficiently large, $\tilde c\super{V}(0)\in\sB_\epsilon\lbrack\tilde c(0)\rbrack=\pi_0\sC_{m,\epsilon}$. 

By Proposition~\ref{prop:J=tilde J} we can find, for any $(c,w)\in\BV\big(0,T;\RR^\sY\times\RR^\sR\big)$ and $\epsilon>0$, a $\zeta\lbrack c,w\rbrack\in C_c^1(0,T;\RR^\sR)$ such that $G(c,w,\zeta\lbrack c,w\rbrack)\geq\J(c,w) -\epsilon$. Then the sets $\sG_\epsilon^\zeta\lbrack c,w\rbrack$ from~\eqref{eq:set G-ball} form an open covering $\bigcup_{(c,w)\in\sK} \sG_\epsilon^{\zeta\lbrack c,w\rbrack}(c,w) \supset \sK\cap\sC_{m,\epsilon}$, and hence there exists a finite subset $(c\super{n},w\super{n})_{n=1,\hdots,N}\subset\sK$ such that $\bigcup_{n=1,\hdots,N} \sG_\epsilon^{\zeta\lbrack c\super{n},w\super{n}\rbrack}(c\super{n},w\super{n}) \supset \sK\cap\sC_{m,\epsilon}$.

Let $\tilde\PP\super{V}_\zeta$ be the law of the Markov process with deterministic initial condition $\tilde c\super{V}(0)$ and the dynamics given by the perturbed generator \eqref{eq:perturbed generator}
For each $n=1,\hdots,N$ we find for sufficiently large $V$ (here we only intersect with the initial balls in order to employ~\eqref{eq:GV bound} in the end of this calculation),
\begin{align*}
    &\mfrac1V \log \tilde\PP\super{V}\big(\sG^{\zeta\lbrack c\super{n},w\super{n}\rbrack}_\epsilon(c\super{n},w\super{n}) \big) 
  = 
     \mfrac1V \log \tilde\PP\super{V}\big(\sG^{\zeta\lbrack c\super{n},w\super{n}\rbrack}_\epsilon(c\super{n},w\super{n})
      \cap \pi_0^{-1}\sB_\epsilon\lbrack\tilde c(0)\rbrack \big) \\
   &\leq
    \sup_{(c,w)\in \sG^{\zeta\lbrack c\super{n},w\super{n}\rbrack}_\epsilon(c\super{n},w\super{n}) 
        \cap \pi_0^{-1}\sB_\epsilon\lbrack\tilde c(0)\rbrack } 
        \mfrac1V\log\frac{d\tilde\PP\super{V}}{d\tilde\PP_{\zeta\lbrack c\super{n},w\super{n}\rbrack}\super{V}}(c,w)  \\
   &\hspace{8cm} + \underbrace{\mfrac1V\log \tilde\PP_{\zeta\lbrack c\super{n},w\super{n}\rbrack}\super{V}\big(  \sG^{\zeta\lbrack c\super{n},w\super{n}\rbrack}_\epsilon(c\super{n},w\super{n}) \big)}_{\leq0} \\
  &\stackrel{\text{Th.~\ref{th:change of measure general}}}{\leq}
   \sup_{(c,w)\in \sG^{\zeta\lbrack c\super{n},w\super{n}\rbrack}_\epsilon(c\super{n},w\super{n})
    \cap \pi_0^{-1}\sB_\epsilon\lbrack\tilde c(0)\rbrack } 
    - G\super{V}(c,w,\zeta\lbrack c\super{n},w\super{n}\rbrack) \\
  &\,\,\,\stackrel{\eqref{eq:GV bound}}{\leq}
   \sup_{(c,w)\in \sG^{\zeta\lbrack c\super{n},w\super{n}\rbrack}_\epsilon(c\super{n},w\super{n}) } 
      - G(c,w,\zeta\lbrack c\super{n},w\super{n}\rbrack) + \const\epsilon \\
  &\,\,\,\stackrel{\eqref{eq:set G-ball}}{\leq} - G(c\super{n},w\super{n},\zeta\lbrack c\super{n},w\super{n}\rbrack) + \const\epsilon.
\end{align*}

Because of the finiteness of the covering we can now use the Laplace Principle:
\begin{align*}
  \limsup_{V\to\infty}\mfrac1V\log \tilde\PP\super{V}(\sK) 
    &\leq \limsup_{V\to\infty}\mfrac1V\log \big(\tilde\PP\super{V}(\sK\cap\sC_{m,\epsilon}) + \tilde\PP\super{V}(\sC_{m,\epsilon}^\mathsf{c})\big)\\
    &\leq \max_{n=1,\hdots,N} \limsup_{V\to\infty}\mfrac1V\log\tilde\PP\super{V}(\sG^{\zeta\lbrack c\super{n},w\super{n}\rbrack}_\epsilon) \vee -\mfrac1\epsilon\\
    &\leq \max_{n=1,\hdots,N} \big(- G(c\super{n},w\super{n},\zeta\lbrack c\super{n},w\super{n}\rbrack) + \const\epsilon\big) \vee -\mfrac1\epsilon\\
    &\leq \max_{n=1,\hdots,N} \big(- \J(c\super{n},w\super{n}) + \const\epsilon\big) \vee -\mfrac1\epsilon\\
    &\leq \Big(- \inf_{(c,w)\in\sK\cap\sC_{m,\epsilon}} \J(c,w) + \const\epsilon\Big)\vee-\mfrac1\epsilon\\
    &\leq \Big(- \inf_{\substack{(c,w)\in\sK:\\c(0)\in\sB_\epsilon\lbrack\tilde c(0)\rbrack}} \J(c,w) + \const\epsilon\Big)\vee-\mfrac1\epsilon.
\end{align*}
This proves the claim as $\epsilon$ was chosen arbitrarily.
\end{proof}

We can now deduce the large-deviations upper bound for the mixture:

\begin{corollary}

Let $\mu\super{V}$ satisfy Assumption~\ref{ass:cont initial ldp} and $\bk$ satisfy Assumptions~\ref{ass:rrate conditions}. For any hybrid-compact set $\sK\subset\BV\big(0,T;\RR^\sY\times\RR^\sR\big)$,
\begin{equation*}
  \liminf_{V\to\infty} \frac1V \log \PP\super{V}(\sK) \geq - \inf_{(c,w)\in\sK} \I_0(c(0))+\J(c,w).
\end{equation*}
\label{prop:upper bound}
\end{corollary}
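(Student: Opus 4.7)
The plan is to deduce the displayed inequality as a corollary of Proposition~\ref{prop:lower bound}, which already supplies the large-deviation lower bound on every hybrid-open set under the full mixture law $\PP\super{V}$ (and not merely under conditional laws). Consequently, no fresh tilting or Biggins'-mixture step is needed; what remains is to transfer the open-set bound to the hybrid-compact set $\sK$.

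If $L := \inf_{(c,w)\in\sK}[\I_0(c(0))+\J(c,w)]$ equals $+\infty$ the inequality is trivial, so I would assume $L$ finite. I would next note that the map $(c,w) \mapsto \I_0(c(0)) + \J(c,w)$ is hybrid-lower-semicontinuous: the $\J$-part follows from the remark after~\eqref{eq:H} (each $G(\cdot,\cdot,\zeta)$ is hybrid-continuous and $\J=\tilde\J$ by Proposition~\ref{prop:J=tilde J}), and the $\I_0$-part uses the continuity of $\I_0$ from Assumption~\ref{ass:cont initial ldp} together with hybrid-continuity of $(c,w)\mapsto c(0)$, inherited from the $L^1$-component of hybrid convergence combined with the BV structure. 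Hybrid-compactness of $\sK$ then yields a minimizer $(c^*, w^*)\in\sK$. Using Corollary~\ref{cor:approx}, I would produce regular approximants $(c_\delta, w_\delta) \in \sA$ with $(c_\delta,w_\delta)\xrightarrow{\text{hybrid}}(c^*,w^*)$ and $\I_0(c_\delta(0)) + \J(c_\delta, w_\delta) \to L$, pick a hybrid-open neighborhood $\sO_\delta$ of $(c_\delta, w_\delta)$, and apply Proposition~\ref{prop:lower bound} to $\sO_\delta$ to obtain $\liminf_V V^{-1}\log\PP\super{V}(\sO_\delta) \geq -L - o(1)$; passing $\delta\searrow 0$ closes the argument.

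The hard part is exactly the last step: Proposition~\ref{prop:lower bound} bounds the probability of $\sO_\delta$, but the inequality we want is about $\sK$, and requires $\PP\super{V}(\sK) \geq \PP\super{V}(\sO_\delta)$, i.e.\ $\sO_\delta \subset \sK$. I anticipate two ways to secure this. The cleanest is to choose $\sO_\delta$ small enough to lie in the hybrid-interior of $\sK$; for the compact cones $\sC_{m,\epsilon}$ of Section~\ref{subsec:exponential tightness} this is automatic, since every curve strictly inside the cone admits a hybrid-open neighborhood still inside it. For an abstract hybrid-compact $\sK$ without such structure, I would instead argue that $\inf_{\mathop{\mathrm{int}}(\sK)}[\I_0+\J] = L$ using the approximation Corollary~\ref{cor:approx} (replacing the minimizer by nearby interior curves of essentially the same rate), after which Proposition~\ref{prop:lower bound} applied directly to $\mathop{\mathrm{int}}(\sK)$ finishes the proof. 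This equality of infima is the delicate point, and it is here that continuity of $\I_0$ and the monotonicity and super-homogeneity assumptions on $\bk$ (used in Lemmas~\ref{lem:approx I rrate bounded below}--\ref{lem:approx IV compact support}) are essential, since they guarantee that regular approximants with rate arbitrarily close to $L$ can be constructed and placed inside the interior.
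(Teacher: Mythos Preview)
There is a genuine gap, stemming from a misreading of what the corollary is asserting. Despite the displayed $\liminf\geq$, this is Corollary~\ref{prop:upper bound} in Section~\ref{subsec:upper bound}: it is the weak large-deviation \emph{upper} bound for the mixture, and the intended inequality is
\[
  \limsup_{V\to\infty}\tfrac1V\log\PP\super{V}(\sK)\;\leq\;-\inf_{(c,w)\in\sK}\big[\I_0(c(0))+\J(c,w)\big].
\]
The paper's proof is accordingly completely different from what you propose: it combines the conditional upper bound of Lemma~\ref{lem:upper bound dynamic} (deterministic initial data) with the mixture Lemma~12 of Biggins~\cite{Biggins2004}, which upgrades a family of conditional upper bounds and an initial LDP into an upper bound for the mixture $\PP\super{V}$. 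No use of Proposition~\ref{prop:lower bound} or Corollary~\ref{cor:approx} is made here.

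Your argument, read on its own terms, does not go through. A hybrid-compact $\sK$ can have empty hybrid interior (this is in fact the typical situation in path space), so there is no open $\sO_\delta\subset\sK$ to which Proposition~\ref{prop:lower bound} could be applied, and the claim $\inf_{\mathrm{int}(\sK)}[\I_0+\J]=\inf_{\sK}[\I_0+\J]$ is vacuous or false. Moreover, Corollary~\ref{cor:approx} gives approximants $(c_\delta,w_\delta)\in\sA$ converging to the minimiser $(c^*,w^*)$, but nothing forces these approximants to lie in $\sK$, let alone in its interior. More fundamentally, an LDP lower bound on open sets simply does not imply a lower bound on compact sets; that direction of the LDP is governed by the upper bound, and the tool needed here is precisely the Biggins mixture step that you set aside.
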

\begin{proof} By Assumption~\ref{ass:cont initial ldp} and Lemma~\ref{lem:upper bound dynamic} one can apply \cite[Lemma~12]{Biggins2004} noting that the proof in \cite{Biggins2004} only uses the upper bound proved in Lemma~\ref{lem:upper bound dynamic} not a full LDP.
\end{proof}

\appendix
\section{A change-of-measure result for linear test functionals on jump processes}

Changes of measure are central to the proof of the large deviations principle presented in this work.
This appendix arose out of the need to clarify under exactly what technical conditions  \cite[Appendix~1, Prop.~7.3]{Kipnis1999} could be adapted to the setting of the present work, in particular so that functions of the form $x \mapsto \zeta\cdot x$ could be used since these are not bounded functions (although they are bounded linear operators).
This boundedness restriction is avoided in \cite{Palmowski2002}, but functions used in the change of measure are no longer time dependent and the conditions are less explicit.
Here the aim is to include unbounded, time dependent functions in the change of measure formula, but to give relatively explicit, sufficient conditions that can easily be checked using the model assumptions from the main part of the paper.
In this endeavour the results are restricted to pure jump processes.

Let $\X$ be a Banach space, $T \in (0,\infty]$ and $\left(\Omega, \sF, (\sF_t)_{t\in[0,T)}\right)$ be a filtered probability space with canonical random variable $X:\Omega\to\Omega$, where
\begin{itemize}
\item $\Omega$ is a subset of the c\`adl\`ag functions $[0,T) \rightarrow \X$, with the convention $f(T):=f(T-)$ if $T<\infty$,
\item $\sF$ is the Borel $\sigma$-algebra generated by a separable topology on $\Omega$ and equal to the $\sigma$-algebra generated by the time evaluation functions $X \mapsto X(t)$.
\end{itemize}
Note that $T=\infty$ is allowed for now. The application in this paper is to the case $\Omega = \BV(0,T, \RR^\sY \times \RR^\sR)$ with the hybrid topology, but this is not a necessary assumption.

We define the jump process through a given family of jump kernels $(\alpha_t(x,\cdot)_{t\in[0,T),x\in\X}$ where $\alpha_t(x,A)$ is the instantaneous jump rate at time $t$ from $x\in\X$ into a measurable set $A\subset\X$, together with a given initial distribution $\mu$. Let $\PP$ be the law of this process, a probability measure on $(\Omega, \sF)$ and $\EE$ the associated expectation operator.

We now define a class of test functions for which the associated propagators (a two-paramater semigroup of linear operators) are well-defined. To construct this set we will assume that there exists a family of measurable (not necessarily compact or bounded) subsets $(K_x)_x$ of $\X$ such that for all $x\in\X$:
\begin{itemize}
\item $x\in K_x$ and $\bigcup_{y\in K_x}K_y = K_x$,
\item $\int_0^T\sup_{y \in K_x}\alpha_t(y,\X)\dd t < \infty$,
\item $\sup_{t\in[0,T)}\sup_{y \in K_x}\alpha_t(y,\X \setminus K_x) = 0$.
\end{itemize}
This expresses the idea that the process started from $x$ can never explode nor leave $K_x$. Then the propagators $(P_{s,t}f)(x) := \EE\left[f(X(t)) \middle\vert X(s)= x \right]$
preserve the set
\begin{equation*}
  B_\mathrm{K}(\X) := \left\{f :\X\rightarrow \RR \text{ measurable, such that } \forall x \in \X \sup_{y\in K_x}\abs{f(y)}< \infty \right\},
\end{equation*}
and satisfy $\frac{\dd}{\dd s}(P_{s,t}f)(x) = -(\sQ_s P_{s,t}f)(x)$ with (time-dependent) generator
\begin{align*}
  (\sQ_t f)(x) := \int_X \left[f(y)-f(x) \right]\alpha_t(x,\dd y).
\end{align*}

We now make three additional assumptions under which the change-of-measure formula holds.
\begin{itemize}
\item there is a $\gamma > 0$ such that $\abs{y-x} \leq \gamma$ for all $x\in\X,t \in (0,T)$, and $\alpha_t(x,\cdot)$-\ae\, $y$,
  \eqnum\label{it:change-of-measure ass 2}
\item $\lim_{n \rightarrow \infty} \PP\left(\tau_n < t\right) = 0$ for all $t\in(0,T), n \in \NN$, where $\tau_n := \inf \left\{t \colon \alpha_t(X(t),\X ) \geq n \right\}$,
  \eqnum\label{it:change-of-measure ass 1}
\item $\mathbb{E}\left[Z^\beta(t)\right]< \infty$ for all $t\in(0,T),\beta>0$, where
  \eqnum\label{it:change-of-measure ass 3}
\begin{multline*}
Z^\beta(t) :=
\exp\left(\beta \abs{X(0)}\right) + \exp\left(\beta \abs{X(t)}\right)\\
+ \int_0^t\!\exp\left(\beta \abs{X(s)}\right)\beta  \abs{X(s)}\,\dd s
+ \int_0^t\!\exp\left(\beta \abs{X(s)}\right)\beta \alpha_s\!\left(X(s),\X\right)\,\dd s.
\end{multline*}
\end{itemize}

The next result is a variation on \cite[Appendix~1, Lem.~5.1]{Kipnis1999}:
\begin{proposition}\label{prop:bmart}
Let $f \colon [0,T) \times \X \rightarrow \RR$ be bounded, absolutely continuous in $t$ and measurable in $x$, with measurable, uniformly bounded derivative $\partial_t f(t,x)$. Then under Assumptions~\eqref{it:change-of-measure ass 1} \& \eqref{it:change-of-measure ass 2},
\end{proposition}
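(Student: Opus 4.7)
The statement in the excerpt is truncated, but the natural (and Kipnis--Landim-style) conclusion is that the process
$$M^f(t) := f(t, X(t)) - f(0, X(0)) - \int_0^t \bigl[\partial_s f(s, X(s)) + (\sQ_s f)(s, X(s))\bigr]\,\dd s$$
is a mean-zero $(\sF_t)$-martingale under $\PP$. My plan is to prove this by the standard Dynkin-formula route, with a localisation to accommodate the (generally unbounded) jump intensity.

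First I would introduce the localising sequence $\tau_n := \inf\{t \colon \alpha_t(X(t),\X) \ge n\}$ already singled out in Assumption~\eqref{it:change-of-measure ass 1}. On $[0,\tau_n)$ the intensity is bounded by $n$, so the expected number of jumps up to $t \wedge \tau_n$ is at most $nT$ and every term appearing below is trivially integrable. Between consecutive jump times $T_i, T_{i+1}$ the trajectory $X$ is constant, so absolute continuity of $f(\cdot, x)$ with uniformly bounded derivative yields $f(u, X(u)) - f(T_i, X(T_i)) = \int_{T_i}^u \partial_s f(s, X(s))\,\dd s$ on $[T_i, T_{i+1})$. Telescoping over the finitely many jumps up to $t \wedge \tau_n$ produces
$$f(t \wedge \tau_n, X(t \wedge \tau_n)) - f(0, X(0)) = \sum_{i \colon T_i \le t \wedge \tau_n} \!\!\bigl[f(T_i, X(T_i)) - f(T_i, X(T_i-))\bigr] + \int_0^{t \wedge \tau_n}\!\! \partial_s f(s, X(s))\,\dd s.$$
The integer-valued jump random measure has $\sF_t$-compensator $\alpha_t(X(t), \dd y)\,\dd t$, so the classical compensation identity shows that the jump sum minus $\int_0^{t\wedge\tau_n} (\sQ_s f)(s, X(s))\,\dd s$ is a mean-zero martingale; rearranging identifies $M^f(\cdot \wedge \tau_n)$ itself as a mean-zero $(\sF_t)$-martingale.

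Next I would pass $n \to \infty$. Assumption~\eqref{it:change-of-measure ass 1} gives $\PP(\tau_n < t) \to 0$ for every $t \in (0,T)$, so along a fast-enough subsequence the Borel--Cantelli lemma yields $t \wedge \tau_n = t$ eventually almost surely and hence $M^f(t \wedge \tau_n) \to M^f(t)$ almost surely. To promote this to $L^1$-convergence (and thus to preservation of conditional expectations) I use the uniform estimate
$$|M^f(t \wedge \tau_n)| \le 2\|f\|_\infty + T\|\partial_t f\|_\infty + 2\|f\|_\infty \int_0^t \alpha_s(X(s), \X)\,\dd s$$
and invoke the stability condition built into the setup: since $X(s) \in K_{X(0)}$ holds $\PP$-almost surely, the last integral is dominated path-wise by the deterministic-in-$X(0)$ quantity $\int_0^T \sup_{y \in K_{X(0)}}\alpha_s(y,\X)\,\dd s < \infty$. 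Dominated convergence therefore transfers the identity $\EE[M^f(t\wedge\tau_n)\,\one_A] = \EE[M^f(s\wedge\tau_n)\,\one_A]$ from the stopped versions (for $s < t$ and $A \in \sF_s$) to the unstopped process, which is exactly the martingale property.

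The principal obstacle is the interaction between the unbounded jump intensity and the wish for a genuine, not merely local, martingale. The localisation through $\tau_n$ together with Assumption~\eqref{it:change-of-measure ass 1} handles non-explosion, while the boundedness of $f$ and $\partial_t f$ combined with the built-in stability bound on $\alpha_t$ inside $K_{X(0)}$ delivers the uniform domination needed for the limit. Notice that Assumption~\eqref{it:change-of-measure ass 2} (bounded jump amplitudes) plays no active role in this particular proposition; its purpose emerges only in the subsequent extension, where the boundedness hypothesis on $f$ is relaxed to allow linear test functions $x \mapsto \zeta\cdot x$ and hence the desired change-of-measure formula on $\BV(0,T;\RR^\sY\times\RR^\sR)$.
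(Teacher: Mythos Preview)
Your overall strategy---localise via $\tau_n$, establish the martingale property for the stopped process, then pass to the limit---is exactly the route the paper takes, and your observation that Assumption~\eqref{it:change-of-measure ass 2} (bounded jump amplitudes) plays no role in this particular proposition is correct.

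There is, however, a genuine gap in the delocalisation step. Your dominating random variable is essentially
\[
  2\lVert f\rVert_\infty + T\lVert\partial_t f\rVert_\infty + 2\lVert f\rVert_\infty\int_0^T \sup_{y\in K_{X(0)}}\alpha_s(y,\X)\,\dd s,
\]
and the standing hypothesis on the family $(K_x)_x$ guarantees only that the last integral is \emph{finite for each realisation of $X(0)$}, not that it lies in $L^1(\PP)$. Without integrability the dominated convergence argument fails, and one cannot even conclude $M^f(t)\in L^1(\PP)$; at best you obtain a local martingale, or a genuine martingale under each conditional law $\PP(\,\cdot\mid X(0)=x)$. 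The paper closes this gap by invoking Assumption~\eqref{it:change-of-measure ass 3}: since $Z^\beta(t)$ contains the term $\beta\int_0^t e^{\beta|X(s)|}\alpha_s(X(s),\X)\,\dd s \ge \beta\int_0^t\alpha_s(X(s),\X)\,\dd s$, the hypothesis $\EE[Z^\beta(t)]<\infty$ supplies precisely the missing $L^1$ bound for the dominating function. (The proposition as stated lists Assumptions~\eqref{it:change-of-measure ass 1} and~\eqref{it:change-of-measure ass 2}, but the paper's own proof explicitly invokes \eqref{it:change-of-measure ass 1} and \eqref{it:change-of-measure ass 3}; the appearance of \eqref{it:change-of-measure ass 2} in the hypothesis appears to be a slip.)
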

\begin{equation*}
  M^f(t) := f\left(t,X(t)\right) - f\left(0,X(0)\right) 
   - \int_0^t\!\bigl(\left(\partial_s + \sQ_s\right)f\bigr)\left(s,X(s)\right)\,\dd s
\end{equation*}
is a Martingale in the filtration $\left(\mathcal{F}_t\right)_{t\geq 0}$ generated by $X(t)$.
\begin{proof}
In the case that $f$ does not depend on time and $\sup_{t,x} \alpha_t(x,\X) < \infty $ the result follows from \cite[Ch.~4 Sect.~7]{Ethier86}. The additional term $\partial_s$ is added for time-dependent test functions due to a chain rule. By approximating by the process stopped at $\tau_n$ and using Assumptions~\eqref{it:change-of-measure ass 1}\&\eqref{it:change-of-measure ass 3} one can remove the boundedness assumption on $\alpha$.

\end{proof}

\begin{lemma}\label{lem:Martingale linear test function}
Under Assumptions~\eqref{it:change-of-measure ass 2}, \eqref{it:change-of-measure ass 1} \& \eqref{it:change-of-measure ass 3}, 
the conclusion of Proposition~\ref{prop:bmart} is valid when  $f(t,x) = \zeta(t) \cdot x$ and when $f(t,x) = e^{\zeta(t) \cdot x}$, in both cases for $\zeta \in C^1_\mathrm{b}\left([0,T);\X^\ast\right)$ where $\X^\ast$ is the Banach dual of $\X$.
\end{lemma}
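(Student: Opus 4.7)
The plan is to reduce to Proposition~\ref{prop:bmart} by approximating each unbounded $f$ by a sequence of bounded test functions $f_n$, and then passing to the limit in the resulting martingale identities via a dominated-convergence argument whose integrability is supplied by Assumption~\eqref{it:change-of-measure ass 3}.

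Concretely, I would fix a smooth cutoff $\chi_n\in C_b^\infty(\X;[0,1])$ with $\chi_n(x)=1$ for $\lvert x\rvert\le n$ and $\chi_n(x)=0$ for $\lvert x\rvert\ge 2n$, and set $f_n(t,x):=\chi_n(x)f(t,x)$ in each of the two cases $f(t,x)=\zeta(t)\cdot x$ and $f(t,x)=\e^{\zeta(t)\cdot x}$. Each $f_n$ is bounded, measurable in $x$, absolutely continuous in $t$ and has uniformly bounded time derivative (with $n$-dependent bounds), so Proposition~\ref{prop:bmart} directly supplies the martingales $M^{f_n}$. Pointwise $\PP$--a.s.\ convergence $M^{f_n}(t)\to M^f(t)$ would then be obtained from the bounded-jump-size Assumption~\eqref{it:change-of-measure ass 2}: whenever $\lvert x\rvert\le n-\gamma$, the cutoff satisfies $\chi_n\equiv 1$ on the $\gamma$-neighbourhood of $x$, and hence $(\sQ_s f_n)(s,x)=(\sQ_s f)(s,x)$; and since $\int_0^t\alpha_s(X(s),\X)\,\dd s<\infty$ $\PP$--a.s.\ (itself inherited from Assumption~\eqref{it:change-of-measure ass 3}), the path $X$ undergoes only finitely many jumps on $[0,t]$ and stays in a $\PP$--a.s.\ bounded set.

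The step that requires the most care is the $L^1(\PP)$ upgrade. Taking $\beta:=\lVert\zeta\rVert_\infty\vee\lVert\dot\zeta\rVert_\infty\vee 1$ and using the elementary inequality $r\le\beta^{-1}\e^{\beta r}\beta r$ for $r\ge 0$, I would dominate $\lvert f_n(t,X(t))\rvert$, $\lvert\partial_s f_n(s,X(s))\rvert$ and $\lvert(\sQ_s f_n)(s,X(s))\rvert$ uniformly in $n$ by constant multiples of the integrands appearing in $Z^\beta(t)$. For the linear case the jump-size bound gives $\lvert(\sQ_s f)(s,x)\rvert\le\lVert\zeta\rVert_\infty\gamma\,\alpha_s(x,\X)$; for the exponential case, the inequality $\lvert\e^a-\e^b\rvert\le\e^{\lvert a\rvert\vee\lvert b\rvert}\lvert a-b\rvert$ produces $\lvert(\sQ_s f)(s,x)\rvert\le\lVert\zeta\rVert_\infty\gamma\,\e^{\lVert\zeta\rVert_\infty(\lvert x\rvert+\gamma)}\,\alpha_s(x,\X)$. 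Assumption~\eqref{it:change-of-measure ass 3} then places each dominating function in $L^1(\PP)$, dominated convergence delivers $M^{f_n}(t)\to M^f(t)$ in $L^1(\PP)$, and the identity $\EE[M^{f_n}(t)\mid\sF_s]=M^{f_n}(s)$ passes cleanly to the limit.

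The main obstacle is precisely this uniform-in-$n$ domination of the generator term: without Assumption~\eqref{it:change-of-measure ass 2} the cutoff $\chi_n$ could amplify the jump increment $f_n(s,y)-f_n(s,x)$ far beyond anything $f$ itself produces, and the dominating function would depend on $n$. The role of the jump-size bound $\gamma$ is exactly to prevent the truncation from interacting badly with jumps near the cutoff boundary $\{\lvert x\rvert=n\}$; with it in hand, the argument is essentially routine.
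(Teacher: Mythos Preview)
Your approach is correct and follows the same overall scheme as the paper: approximate by bounded $f_n$, apply Proposition~\ref{prop:bmart}, and pass to the limit via dominated convergence with $Z^\beta$ as the majorant. The difference lies in the truncation. The paper cuts off the \emph{scalar} $\zeta(t)\cdot x$ by a smooth nondecreasing $\theta_n:\RR\to\RR$ with $\theta_n(y)=y$ for $y\le n$, $\theta_n\le n+1$, $0\le\theta_n'\le1$, and sets $f_n(t,x)=\exp\bigl(\theta_n(\zeta(t)\cdot x)\bigr)$; you instead multiply by a spatial cutoff $\chi_n(x)$. The paper's choice avoids any appeal to smooth bump functions on the Banach space $\X$ (these need not exist in infinite dimensions; in your version this is harmless because Proposition~\ref{prop:bmart} only asks for measurability in $x$, so a Lipschitz $\chi_n$ would already suffice). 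The scalar truncation also makes the uniform generator bound immediate from $\theta_n'\le1$, whereas with your cutoff the stated increment bound $\lVert\zeta\rVert_\infty\gamma\,\alpha_s(x,\X)$ in the linear case is for $\sQ_s f$, not $\sQ_s f_n$, and does not survive multiplication by $\chi_n$ near $\{\lvert x\rvert\approx n\}$; you need the cruder triangle-inequality estimate $\lvert f_n(s,y)-f_n(s,x)\rvert\le\lvert f(s,y)\rvert+\lvert f(s,x)\rvert\le 2\lVert\zeta\rVert_\infty(\lvert x\rvert+\gamma)$ there, which is still dominated by $Z^\beta$. With that adjustment both routes reach the same conclusion.
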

\begin{proof}
The exponential case is proved here; the linear case is similar.
Let $\theta_n \in C^\infty(\RR)$ be such that $\theta_n(y) = y$ for $y\leq n$, $\theta_n \leq n+1$ and
$0\leq\theta_n^\prime\leq 1$. Take an arbitrary $\zeta \in C^1_\mathrm{b}\left([0,\infty);\X^\ast\right)$ and set $f_n(t,x) = \exp\!\left(\theta_n\!\left(\zeta(t)\cdot x \right)\right)$ so that Proposition~\ref{prop:bmart} can be applied to $M^{f_n}(t)$.

It follows from the definitions that for all $t$ and $x$,
\begin{equation*}
  \lim_n f_n(t,x) = f(t,x) \quad \text{and} \quad \lim_n \partial_t f_n(t,x) = \partial_t f(t,x).
\end{equation*}
Because of Assumption~\eqref{it:change-of-measure ass 2} $\sQ_t f$ is well defined and one can prove by dominated convergence that $\lim_n (\sQ_t f_n)(t,x)=(\sQ_t f)(t,x)$ for all $t,x$.  Preparatory to further applications of dominated convergence we estimate
\begin{align*}
  f_n(t,x) &\leq \exp\left(\norm{\zeta}_\infty \abs{x}\right), \\
  \abs{\partial_t f_n(t,x)} &\leq \exp\left(\norm{\zeta}_\infty \abs{x}\right)\norm{\dot \zeta}_\infty\abs{x}, \qquad\text{and}\\
  \abs{(\sQ_t f_n)(t,x)} &\leq \exp\left(\norm{\zeta}_\infty \abs{x}\right)
   \left(\exp\left(\norm{\zeta}_\infty \gamma\right)+1\right)\alpha_t(x,\X).
\end{align*}
With these estimates and Assumption~\eqref{it:change-of-measure ass 3} one checks $\lim_n M^{f_n}(t) = M^f(t)$ almost surely.
Again using Assumption~\eqref{it:change-of-measure ass 3} one can find a $\beta > 0$ such that $\abs{M^{f_n}(t)} \leq Z^\beta(t)$ almost surely.
By the conditional expectation form of the dominated convergence theorem, for $s<t$,
\begin{equation*}
  M^f(s)=\lim_n M^{f_n}(s) = \lim_n \EE\big\lbrack M^{f_n}(t) \vert \sF_s\big\rbrack = \EE\big\lbrack\lim_n M^{f_n}(t) \vert \sF_s\big\rbrack = \EE\big\lbrack M^f(t) \vert \sF_s\big\rbrack.
\end{equation*}
\end{proof}

Finally, for the exponential change of measure we will need a bounded time interval.
\begin{theorem} 
Let $T<\infty$, $\zeta\in C_b^1(0,T;\RR^\sR)$, and let Assumptions~\eqref{it:change-of-measure ass 2}, \eqref{it:change-of-measure ass 1} and \eqref{it:change-of-measure ass 3} all hold. Suppose $\PP_\zeta$ is the law of some process with paths in $\Omega$ and having initial distribution $\mu$. Under $\PP_\zeta$, $X$ is a Markov process with generator
\begin{equation*}
  (\sQ_{\zeta,t}f)(x)= \int_\X \left[f(y)-f(x) \right] \e^{\zeta(t)\cdot y - \zeta(t)\cdot x}\alpha_t(x,\dd y)
\end{equation*}
if and only if
\begin{equation}
  \log\frac{d\PP_\zeta}{d\PP}(X)=
    \zeta(T)\cdot X(T) - \zeta(0)\cdot X(0) - 
    \int_0^T\!\e^{-\zeta(t)\cdot X(t)} \big( \partial_t + \sQ_t\big)e^{\zeta(t)\cdot X(t)}\dd t.
\label{eq:app change of measure}
\end{equation}
\label{th:change of measure general}
\end{theorem}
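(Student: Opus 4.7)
My plan is to build both implications around one explicit exponential martingale. Set $f(t,x):=e^{\zeta(t)\cdot x}$ and
\begin{equation*}
B(t):=\int_0^t e^{-\zeta(s)\cdot X(s)}(\partial_s+\sQ_s)f(s,X(s))\,\dd s,
\end{equation*}
so that the claimed density is exactly $Z(T):=f(T,X(T))\,e^{-B(T)}/f(0,X(0))$. Lemma~\ref{lem:Martingale linear test function} tells us that the compensated process $M^f(t):=f(t,X(t))-f(0,X(0))-\int_0^t(\partial_s+\sQ_s)f(s,X(s))\,\dd s$ is a $\PP$-martingale. Since $e^{-B}$ is continuous and of finite variation, the Ito product formula for a \cadlag process against a continuous finite-variation integrator collapses to $\dd\bigl(f(t,X(t))e^{-B(t)}\bigr)=e^{-B(t)}\,\dd M^f(t)$, the two $(\partial_s+\sQ_s)f\,\dd s$ drift terms cancelling exactly. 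Hence $Z$ is a non-negative local $\PP$-martingale; promoting it to a true martingale with $\EE[Z(T)]=1$ uses the uniformly $L^1$-integrable envelope $Z^\beta$ supplied by Assumption~\eqref{it:change-of-measure ass 3}. Define $\tilde\PP_\zeta$ by $\dd\tilde\PP_\zeta/\dd\PP:=Z(T)$; since $Z(0)=1$ the initial distribution is preserved.

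For the direction $(\Leftarrow)$, I would verify that under $\tilde\PP_\zeta$ the canonical process is Markov with generator $\sQ_{\zeta,t}$. By the martingale characterisation it suffices to show, for every bounded $g:[0,T]\times\X\to\RR$ with bounded $\partial_t g$, that
\begin{equation*}
N^g_\zeta(t):=g(t,X(t))-g(0,X(0))-\int_0^t(\partial_s+\sQ_{\zeta,s})g(s,X(s))\,\dd s
\end{equation*}
is a $\tilde\PP_\zeta$-martingale, equivalently that $Z\cdot N^g_\zeta$ is a $\PP$-martingale. The algebraic heart of the argument is the twisting identity $f\,\sQ_{\zeta,t}g=\sQ_t(fg)-g\,\sQ_t f$ (all evaluated at the same $t$), which is immediate from the definitions. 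Expanding $\dd(ZN^g_\zeta)$ by the product rule and applying Proposition~\ref{prop:bmart} and Lemma~\ref{lem:Martingale linear test function} to both $f$ and $fg$, the drift terms cancel thanks to this identity, leaving only stochastic integrals against $\PP$-martingales.

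For the converse $(\Rightarrow)$ I appeal to uniqueness for the martingale problem. The construction above produces a measure $\tilde\PP_\zeta$ under which $X$ is Markov with initial distribution $\mu$ and generator $\sQ_{\zeta,t}$; the hypothesis of the theorem says $\PP_\zeta$ has the same initial distribution and generator. Well-posedness of the time-inhomogeneous jump process with tilted rates $e^{\zeta(t)\cdot(y-x)}\alpha_t(x,\dd y)$, which inherit uniform boundedness on each cone $K_x$ from the cone-preservation hypothesis together with Assumption~\eqref{it:change-of-measure ass 2}, forces $\PP_\zeta=\tilde\PP_\zeta$ on $\sF$, so $\dd\PP_\zeta/\dd\PP$ coincides $\PP$-a.s.\ with $Z(T)$.

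The main obstacle I expect is the promotion of $Z$ (and of $ZN^g_\zeta$) from local to true martingale together with the attendant exchange of limits. The integrand $e^{-B(s)}/f(0,X(0))$ is not a priori bounded, since $B$ accumulates $\sQ_s f$ and hence $\alpha_s(X(s),\X)$, controlled only via the jump-rate stopping times $\tau_n$ from Assumption~\eqref{it:change-of-measure ass 1}. The remedy is to localise at $\tau_n$, where $Z^{\tau_n}$ is bounded and a genuine martingale, and then use the envelope $Z^\beta$ of Assumption~\eqref{it:change-of-measure ass 3} to dominate $Z(t\wedge\tau_n)$ and $Z(t\wedge\tau_n)N^g_\zeta(t\wedge\tau_n)$ uniformly in $n$; conditional dominated convergence then extends the martingale property to all of $[0,T]$.
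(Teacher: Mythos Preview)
Your approach is correct and closely parallels the paper's: both hinge on the exponential process $Z=E$ being a mean-one $\PP$-martingale, and both dispose of the converse by uniqueness. The one substantive methodological difference is how you identify the generator under the tilted measure. You verify the martingale problem directly via the Doob-transform identity $f\,\sQ_{\zeta,t}g=\sQ_t(fg)-g\,\sQ_t f$ and the product rule for $Z N^g_\zeta$, whereas the paper first derives $\widehat\EE_\zeta[Y\mid\sF_t]=\EE[Y\,E(T)/E(t)\mid\sF_t]$, uses it to establish the $\widehat\PP_\zeta$-Markov property, and then differentiates the propagators $P^\zeta_{s,t}$ in $s$. Your route is essentially the Palmowski--Rolski one the paper cites; the paper's follows Kipnis--Landim. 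Your version makes the algebraic cancellation more transparent; the paper's avoids having to check the martingale problem for an entire class of test functions.

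One caution on your promotion step: the claim that $Z^\beta$ dominates $Z(t\wedge\tau_n)$ uniformly in $n$ is not straightforward. Writing $\log Z(t)=\sum_{s\le t}\zeta(s)\cdot\Delta X(s)-\int_0^t\!\int(e^{\zeta(s)\cdot(y-X(s))}-1)\,\alpha_s(X(s),\dd y)\,\dd s$, the compensator term can contribute up to $+\int_0^t\alpha_s(X(s),\X)\,\dd s$ to the exponent, while $Z^\beta$ only bounds $\int\alpha_s\,e^{\beta|X(s)|}\,\dd s$ \emph{without} an outer exponential. The cone hypothesis $\int_0^T\sup_{y\in K_x}\alpha_t(y,\X)\,\dd t<\infty$ controls this pathwise by a function of $X(0)$, but does not obviously supply a single $\PP$-integrable envelope. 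The paper's sketch glosses over exactly the same point (it simply asserts ``by Lemma~\ref{lem:Martingale linear test function}, $E(t)$ is a \ldots\ martingale''), so this is not a discrepancy with the paper but a place where both presentations would benefit from more care.
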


\begin{proof}
We only need to show the direction ``$\impliedby$''; the converse then follows immediately from the uniqueness of the generator. To this end define $\widehat \PP_\zeta$ by  \eqref{eq:app change of measure} and let the associated expectation operator be $\widehat \EE_\zeta$. We sketch a number of steps, similar to~\cite[Appendix~1, Sect.~7]{Kipnis1999} and \cite{Palmowski2002}, by which it is shown that under $\widehat \PP_\zeta$ $X$ is Markov with generator $\sQ_{\zeta, t}$.  

\begin{enumerate}[1.]
\item Define for $t\in(0,T)$, the process
\begin{equation*}
  E(t):=\exp\!\left(
    \zeta(t)\cdot X(t) - \zeta(0)\cdot X(0) - 
    \int_0^t\!\e^{-\zeta(s)\cdot X(s)} \big( \partial_s + \sQ_s\big)e^{\zeta(s)\cdot X(s)}\dd s
    \right)
\end{equation*}
and recall $E(T) = \lim_{t\nearrow T}E(t)$.
By Lemma~\ref{lem:Martingale linear test function} above, $E(t)$ is a strictly positive, mean-one $\PP$-Martingale. One then shows that $\left.\frac{\dd \widehat \PP_\zeta}{\dd \PP}\right\vert_{\sF_t} = E(t)$ and $\left.\frac{\dd \PP}{\dd\widehat  \PP_\zeta}\right\vert_{\sF_t} = \frac{1}{E(t)}$.

\item For any $Y \in L^1(\Omega, \sF)$, using the definition of conditional expectation and the results from the previous point, it follows that
$\widehat \EE_\zeta\left[Y\middle\vert\sF_t\right] = \EE\left[Y E(T)/E(t)\middle\vert\sF_t\right]$.

\item Next one can use the result from point 2 to show via conditional expectations under $\PP$ and the $\PP$-Markov property that for $t \geq s$ and any bounded and measurable $f:\X\to\RR$, we have 
$\widehat \EE_\zeta\left[f(X(t)) \middle\vert\sF_s\right] =\widehat  \EE_\zeta\left[f(X(t)) \middle\vert\sigma(X(s))\right]$, and so $X$ is $\widehat \PP_\zeta$-Markov.

\item Finally, the propagators $(P^\zeta_{s,t}f)(x) := \widehat \EE_\zeta\left[f(X(t)) \middle\vert X(s)= x \right]$ then satisfy $\frac{\dd}{\dd s}(P_{s,t}f)(x) = -(\sQ_{\zeta,s} P_{s,t}f)(x)$.  This implies that under $\widehat \PP_\zeta$, $X$ has the same finite dimensional distributions as the process with generator $\sQ_{\zeta, t}$ and thus  $\widehat \PP_\zeta = \PP_\zeta$.
\end{enumerate}
\end{proof}

\section*{Acknowledgements}

This research has been funded by the
Deutsche Forschungsgemeinschaft (DFG) through grant 
CRC 1114 "Scaling Cascades in Complex Systems", 
Project C08.

\bibliographystyle{alpha}
\bibliography{library}

\end{document}